\numberwithin{equation}{section}
\renewcommand{\bf}{\textbf}
 \renewcommand{\to}{\rightarrow}
\newcommand{\leqs}{\leqslant}
\newtheorem{theorem}{Theorem}
\newtheorem{propn}[theorem]{Proposition}
\newtheorem{thm}{Theorem}[section]
\newtheorem{prop}[thm]{Proposition}
\newtheorem{lem}[thm]{Lemma}
\newtheorem{cor}[theorem]{Corollary}
\theoremstyle{definition}
\begin{document}

\title{Alternating and symmetric groups with Eulerian generating graph}
\author{Andrea Lucchini\footnote{lucchini@math.unipd.it}\ \  and Claude Marion\footnote{marion@math.unipd.it} \\ \\ \small{\begin{tabular}{c}Dipartimento di Matematica Tullio Levi-Civita, Universit\`{a} degli Studi di Padova, Padova\end{tabular}}}

\maketitle

\noindent{\bf{Abstract.}} Given a finite  group $G$,  the generating graph  $\Gamma(G)$ of $G$ has as  vertices the (nontrivial) elements of $G$ and two vertices  are adjacent if and only if they are distinct and generate $G$ as group elements. 
In this paper we investigate  properties about the degrees of the vertices of $\Gamma(G)$ when $G$ is an alternating group or a symmetric group. In particular, we determine the vertices of $\Gamma(G)$ having even degree and show   that $\Gamma(G)$ is Eulerian if and only if $n$ and $n-1$ are not equal to a prime number congruent to 3 modulo 4.   

\section{Introduction}
Given a finite  group $G$,  the generating graph  $\Gamma(G)$ of $G$ has as  vertices the  (nontrivial) elements of $G$ and two vertices  are adjacent if and only if they are distinct and generate $G$ as group elements.\\
When $G$ is simple and $|G|>2$ many deep results on generation of $G$ in the literature can be translated to results about $\Gamma(G)$. For example, the  property   that $G$ can be generated by two elements amounts to saying that $\Gamma(G)$ has at least one edge. The fact due to  Guralnick and Kantor in \cite{GK} that every nontrivial element of $G$ belongs to a generating pair of elements of $G$ is equivalent to saying that $\Gamma(G)$ has no isolated vertices.   More recently, Breuer, Guralnick and Kantor  proved in \cite{BGK} that $G$ has spread at least 2, or in other words $\Gamma(G)$ has diameter at most 2.\\
More generally, one can try to characterise finite groups $G$ for which a given graph-theoretical property holds in  $\Gamma(G)$. 
As an illustration, recall that a graph $\Gamma$ is Hamiltonian (respectively, Eulerian) if it contains a cycle going through every vertex (respectively, edge) of $\Gamma$ exactly once. 
In \cite{BGLMN},  Breuer, Guralnick, Maroti, Nagy and the first author have investigated  the finite groups $G$ for which $\Gamma(G)$ is Hamiltonian. For example they showed that every finite simple group  of large enough order has an Hamiltonian generating graph and proposed an interesting conjecture characterising the finite groups having  an Hamiltonian generating graph.\\
It is natural to investigate the finite groups $G$ for which $\Gamma(G)$ is Eulerian. A famous result going back to Euler states that a  connected graph $\Gamma$ is Eulerian if and only if every vertex of $\Gamma$ is of even degree. \\
In this paper we study properties of finite  groups $G$ relative to the degrees of the vertices of $\Gamma(G)$.  Given an element $g\in G$, we let $|g|$ be the order of $g$ in $G$, and furthermore if $g\neq 1$ we let $\delta(g)$ be the degree of $g$ in $\Gamma(G)$. Also $G^{\textrm{ab}}$ denotes the abelianization of $G$, i.e. $G^{\textrm{ab}}=G/G'$ where $G'=[G,G]$ is the commutator subgroup of $G$.  
\\

Given a finite group $G$, our first result is a criterion for a vertex of $\Gamma(G)$ to be of even degree. 

\begin{propn}\label{p:criterion}
Let $G$ be a  finite group and $1\neq g \in G$. Let $\epsilon \in \{1,2\}$ be such that $\epsilon=1$ if and only if $G^{\textrm{ab}}$ is of odd order. If $2^\epsilon$ divides $|N_G(\langle g\rangle)|$ then $\delta(g)$ is even. 
\end{propn}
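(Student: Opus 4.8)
The plan is to compute $\delta(g)$ modulo $2$ by counting $\Omega_g:=\{h\in G:\langle g,h\rangle=G\}$. Unless $G=\langle g\rangle$ one has $\delta(g)=|\Omega_g|$ (the elements $1$ and $g$ lie in $\Omega_g$ exactly when $G=\langle g\rangle$), while if $G=\langle g\rangle$ then $\delta(g)=|G|-2$ and the hypothesis already forces this to be even: $G^{\textrm{ab}}=G$ would be of odd order, so $\epsilon=1$ and $2\mid|N_G(\langle g\rangle)|=|G|$, a contradiction. So I assume $G\neq\langle g\rangle$ and aim to show $|\Omega_g|$ is even. Write $N:=N_G(\langle g\rangle)$. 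Conjugation by $N$ preserves $\Omega_g$ (if $x$ normalises $\langle g\rangle$ then $g$ and $g^x$ generate the same cyclic subgroup, so $\langle g,h^x\rangle=\langle g^x,h^x\rangle=\langle g,h\rangle^x$). Since $2\mid 2^\epsilon\mid|N|$, there is an involution $t\in N$, and the action of $\langle t\rangle\cong C_2$ by conjugation yields the standard congruence $|\Omega_g|\equiv|\Omega_g\cap C_G(t)|\pmod 2$. Everything then comes down to choosing $t$ so that $h\mapsto ht$ is a fixed-point-free involution of $\Omega_g\cap C_G(t)$.

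The computation that makes this work is the following. Let $t\in N$ be an involution and $h\in\Omega_g\cap C_G(t)$, and set $M:=\langle g,ht\rangle$. Since $t$ centralises $ht$ and normalises $\langle g\rangle$, it normalises $M$; hence $M\langle t\rangle$ is a subgroup containing $g$, $ht$ and $t$, hence containing $h=(ht)t$, so $M\langle t\rangle\supseteq\langle g,h\rangle=G$. Therefore either $t\in M$, forcing $M=G$, or $t\notin M$ and then $[G:M]=[M\langle t\rangle:M]=|\langle t\rangle|=2$; in the latter case $M$ is a normal subgroup of index $2$ not containing $t$, i.e.\ there is a surjection $\chi:G\to C_2$ with $\chi(t)\neq 1$. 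Consequently, if $t$ lies in $M_1:=G^2G'$ --- the smallest normal subgroup with elementary abelian $2$-quotient, equivalently the intersection of the kernels of all homomorphisms $G\to C_2$ --- then $\langle g,ht\rangle=G$, so $ht\in\Omega_g$; as also $ht\in C_G(t)$, $(ht)t=h$ and $ht\neq h$, the map $h\mapsto ht$ is a fixed-point-free involution of $\Omega_g\cap C_G(t)$, whence $|\Omega_g|$ is even. The same is immediate if instead $t\in\langle g\rangle$, since then $\langle g,ht\rangle=\langle g,h\rangle=G$ directly.

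It remains to exhibit an involution $t\in N$ lying in $\langle g\rangle$ or in $M_1$. If $|g|$ is even, take $t=g^{|g|/2}$, the involution of $\langle g\rangle$; no further hypothesis is needed. Suppose $|g|$ is odd and let $d:=\dim_{\mathbb{F}_2}(G/M_1)$, so $G/M_1\cong C_2^d$. For $h\in\Omega_g$ the image of $\langle g,h\rangle=G$ generates $C_2^d$, while $g$ (of odd order) maps to the identity; hence $C_2^d$ is cyclic and $d\leq 1$ --- if $d\geq 2$ then $\Omega_g=\emptyset$ and $\delta(g)=0$ is even. If $d=0$ then $M_1=G$ and $|G^{\textrm{ab}}|$ is odd, so $\epsilon=1$, $N$ has even order, and any involution of $N$ works. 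If $d=1$ then $[G:M_1]=2$ and $|G^{\textrm{ab}}|$ is even, so $\epsilon=2$ and $4\mid|N|$; since $[N:N\cap M_1]\leq 2$, the subgroup $N\cap M_1$ has even order, hence contains an involution $t$, which necessarily lies in $M_1$. This covers all cases.

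The delicate point is the role of $G^{\textrm{ab}}$: I expect the crux of a careful write-up to be the remark that the pairing $h\mapsto ht$ can fail only when $\langle g,ht\rangle$ has index $2$ in $G$, so that the obstruction lives entirely in $\Hom(G,C_2)$, together with the observation that the exponent $2^\epsilon$ in the hypothesis is precisely what guarantees an involution inside the (possibly index-$2$) subgroup $N\cap M_1$ of $N$; in particular the bound cannot be weakened.
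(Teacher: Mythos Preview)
Your proof is correct, and it takes a genuinely different route from the paper's. The paper argues via the M\"obius formula $\delta(g)=\sum_{H\ni g}\mu_G(H)|H|$: it lets $K=N_G(\langle g\rangle)$ act by conjugation on the set $\mathcal{S}$ of subgroups $H\ni g$ with $|H|$ and $\mu_G(H)$ both odd, and then invokes the Hawkes--Isaacs--\"Ozaydin theorem (that $|N_G(H):H|$ divides $m(H)\mu_G(H)$ with $m(H)$ the square-free part of $|G:G'H|$) to force $|N_G(H)|_2\leq\epsilon$ for each $H\in\mathcal{S}$; the hypothesis $2^\epsilon\mid|K|$ then makes every $K$-orbit on $\mathcal{S}$ even, and the sum collapses modulo $2$.

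Your argument bypasses both the M\"obius calculus and the Hawkes--Isaacs--\"Ozaydin input entirely. You work directly with the generating set $\Omega_g$, pick a single well-chosen involution $t\in N$ (in $\langle g\rangle$ when $|g|$ is even, and in $N\cap G^2G'$ otherwise), and combine two commuting involutions on $\Omega_g$---conjugation by $t$ and right translation by $t$---to produce a fixed-point-free pairing. The key observation, that $\langle g,ht\rangle$ is either all of $G$ or an index-$2$ subgroup missing $t$, localises the obstruction precisely in $\Hom(G,C_2)$ and explains why one needs $4\mid|N|$ rather than $2\mid|N|$ when $|G^{\mathrm{ab}}|$ is even: that extra factor of $2$ is exactly what guarantees an involution survives in $N\cap G^2G'$. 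This is more elementary and arguably more illuminating than the paper's proof; what the paper's approach buys is a template that plugs into the M\"obius framework used throughout the rest of the article (Propositions~\ref{p:sm} and~\ref{p:hawkes89} are reused repeatedly later). One small remark: your treatment of the cyclic case $G=\langle g\rangle$ is correct but compressed---the sentence ``$G^{\mathrm{ab}}=G$ would be of odd order'' is really the contrapositive ``if $\delta(g)=|G|-2$ were odd then $|G|$ would be odd, whence $\epsilon=1$ and $2\mid|G|$ fails''; it may be worth spelling that out.
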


For an integer $n\geq 3$, let  ${\rm Alt}_n$ and ${\rm Sym}_n$ denote respectively the alternating and the symmetric group on $n$ letters. The rest of the paper concentrates on  $G={\rm Alt}_n$ or $G={\rm Sym}_n$. As $\Gamma({\rm Alt}_3)$  and $\Gamma({\rm Alt}_4)$  have respectively diameter 1 and 2, these graphs are both connected. In particular, the result of \cite{BGK} mentioned above yields that  $\Gamma({\rm Alt}_n)$ is connected for every $n$. \\
Binder showed in \cite{Binder} that  $\Gamma({\rm Sym}_n)$ is connected and has diameter 2 for $n>4$. In fact $\Gamma({\rm Sym}_3)$ has diameter 2 and is therefore connected but $\Gamma({\rm Sym}_4)$ is not connected as any even involution of ${\rm Sym}_4$ is an isolated vertex of $\Gamma({\rm Sym}_4)$. In particular $\Gamma({\rm Sym}_n)$ is connected for every $n\neq 4$.  \\
 Also note that ${\rm Sym}_n^{\textrm{ab}}\cong C_2$ for $n\geq 3$,   ${\rm Alt}_n^{\textrm{ab}}\cong C_3$ for $n\in \{3,4\}$ and ${\rm Alt}_n^{\textrm{ab}}$ is trivial for $n\geq 5$. (For  a positive integer $m$, $C_m$ denotes the cyclic group of order $m$.)\\

We  characterise the integers $n$  such that $|N_{{\rm Alt}_n}(\langle g\rangle)|$ is even for every $g\in {\rm Alt}_n$, as well as the integers $n$ such that $|N_{{\rm Sym}_n}(\langle g\rangle)|$ is divisible by 4 for every $g\in {\rm Sym}_n$. This gives our first examples of alternating and symmetric groups having an Eulerian generating graph.  

\begin{propn}\label{p:1}
Let $G={\rm Alt}_n$ or $G={\rm Sym}_n$ where $n\geq 3$ is an integer.  Let $\Gamma(G)$ be the generating graph of $G$ and let $1\neq g\in G$.  Let $e\in\{2,4\}$ be such that $e=2$ if $G={\rm Alt}_n$, otherwise $e=4$. The following assertions hold. 
\begin{enumerate}[(i)]
\item If $G={\rm Sym}_3$ then $|N_G(\langle g\rangle)|$ is odd only if $|g|=3$ (otherwise $|N_G(\langle g\rangle)|\equiv 2 \mod 4$.)  
 \item Suppose $G\neq {\rm Sym}_3$. Then $|N_G(\langle g\rangle)|\not \equiv 0 \mod e$  if and only if  there exists a prime number $p$ congruent to $3$ modulo $4$ and a positive integer $k$ such that $|g|=p^k$ and  $n$ can be decomposed into a sum 
\begin{equation}\label{e:sum}
n=\sum_{i=0}^k a_ip^i
\end{equation}
 where   $a_k=1$, $a_i\in \{0,1\}$ for $0\leq i< k$ and, only when $G={\rm Alt}_n$, the number of nonzero integers $a_i$ with $i$ odd is odd.   Moreover if $|N_G(\langle g \rangle)|\not \equiv 0 \mod e$ then the cycle shape of $g$ is given by $\{p^i:  a_i \neq 0\}.$
 \item  If $|g|\neq p^k$ where $p$ is a prime number congruent to $3$ modulo $4$ and $k\in \mathbb{N}$, or   $n$ cannot be decomposed into a sum as in $(\ref{e:sum})$, then $\delta(g)$ is even. 
  \item If $n$ cannot be decomposed into a sum as in $(\ref{e:sum})$ where $p$ is a prime number congruent to $3$ modulo $4$ and $k\in \mathbb{N}$, then  $\Gamma(G)$ is Eulerian. 
\end{enumerate}
\end{propn}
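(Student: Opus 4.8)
The plan is to reduce the whole proposition to a $2$-adic computation with normalizers and then apply Proposition~\ref{p:criterion}. The first observation is that $G^{\textrm{ab}}$ has odd order exactly when $G={\rm Alt}_n$, so there $\epsilon=1$ and $e=2^{\epsilon}=2$, whereas $\epsilon=2$ and $e=2^{\epsilon}=4$ for $G={\rm Sym}_n$; thus Proposition~\ref{p:criterion} says precisely that $e\mid|N_G(\langle g\rangle)|$ forces $\delta(g)$ to be even. Part (i) is a direct check on the five nontrivial elements of ${\rm Sym}_3$. Granting (i) and (ii), part (iii) is immediate: for $G={\rm Sym}_3$ with $|g|\neq3$ one computes $\delta(g)=4$ by hand, and in every other case the contrapositive of (ii) gives $e\mid|N_G(\langle g\rangle)|$, so Proposition~\ref{p:criterion} applies. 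Part (iv) then follows because its hypothesis forces $n\notin\{3,4\}$ (both $3=3^1$ and $4=3^1+3^0$ are decompositions of the excluded type), so $\Gamma(G)$ is connected (for ${\rm Alt}_n$ always, and for ${\rm Sym}_n$ with $n\geq5$ by Binder \cite{Binder}) while the hypothesis of (iii) is satisfied by every nontrivial $g$ (its second alternative holds), whence every vertex of $\Gamma(G)$ has even degree and $\Gamma(G)$ is Eulerian by Euler's theorem. So the real content is (ii), and from now on I assume $G\neq{\rm Sym}_3$, the remaining small groups ${\rm Alt}_3,{\rm Alt}_4$ being settled by inspection.

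For (ii), write $N=N_{{\rm Sym}_n}(\langle g\rangle)$ and $C=C_{{\rm Sym}_n}(g)=C_{{\rm Sym}_n}(\langle g\rangle)\trianglelefteq N$, so $N/C$ embeds in ${\rm Aut}(\langle g\rangle)\cong(\Z/|g|\Z)^{\times}$, and $|C|=\prod_{j\geq1}j^{m_j}\,m_j!$ when $g$ has $m_j$ cycles of length $j$. I would begin from three remarks: (a) for any $r$ coprime to $|g|$ the power $g^r$ has the same cycle type as $g$, hence is ${\rm Sym}_n$-conjugate to $g$ by an element normalizing $\langle g\rangle$, so $N/C$ is the whole of $(\Z/|g|\Z)^{\times}$ and $|N|=|C|\,\varphi(|g|)$; (b) if $|g|\geq3$ then inversion is a nontrivial (order $2$) element of $N/C$, so $|N|$ is even; (c) $|N|$ is even for every $g\neq1$. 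Now assume $|N_G(\langle g\rangle)|\not\equiv0\bmod e$. If $G={\rm Sym}_n$ this reads $v_2(|N|)=v_2(|C|)+v_2(\varphi(|g|))\leq1$. Were some cycle of $g$ of even length, then, putting $a=v_2(|g|)\geq1$, some cycle length would be divisible by $2^a$, giving $v_2(|C|)\geq a$ and $v_2(\varphi(|g|))\geq a-1$, hence $2a-1\leq1$ and $a=1$; then $v_2(\varphi(|g|))=0$ forces $|g|=2$, but a nontrivial involution on $n\geq4$ points has $v_2(|C|)\geq2$, a contradiction. So every cycle length of $g$ is odd; then $|g|$ is odd and $v_2(\varphi(|g|))=\sum_{p\mid|g|}v_2(p-1)\geq1$ (as $|g|\geq3$), forcing this sum to equal $1$, i.e. $|g|=p^k$ with $v_2(p-1)=1$ (equivalently $p\equiv3\bmod4$), and $v_2(|C|)=\sum_j v_2(m_j!)=0$, i.e. the cycle lengths are pairwise distinct. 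They are then distinct powers of $p$ dividing $p^k$, one of them equal to $p^k$, so the cycle shape of $g$ is $\{p^i:i\in S\}$ with $k\in S\subseteq\{0,\dots,k\}$; setting $a_i=1$ iff $i\in S$ yields (\ref{e:sum}), and $|N|=\big(\prod_{i\in S}p^i\big)\varphi(p^k)$ has $2$-part $v_2(p-1)=1$, so $|N|\equiv2\bmod4$. The converse is this computation read backwards, together with $|g|=p^{\max S}=p^k$.

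When $G={\rm Alt}_n$ we have $N_{{\rm Alt}_n}(\langle g\rangle)=N\cap{\rm Alt}_n$, of index $1$ or $2$ in $N$; since $|N|$ is even, $|N\cap{\rm Alt}_n|$ is odd iff $v_2(|N|)=1$ and $N\not\leq{\rm Alt}_n$. The first condition was just shown to be equivalent to the cycle-shape assertion of (ii), so it remains to decide when $N$ contains an odd permutation. All cycles of $g$ have odd length, so $g$ and the whole of $C$ lie in ${\rm Alt}_n$, and the sign homomorphism of $N$ factors through $N/C\cong(\Z/p^k\Z)^{\times}$, a cyclic group of even order carrying exactly one nontrivial character to $\{\pm1\}$, namely the Legendre symbol $r\mapsto\left(\frac{r}{p}\right)$, which sends $-1$ to $(-1)^{(p-1)/2}=-1$ since $p\equiv3\bmod4$. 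Hence $N\not\leq{\rm Alt}_n$ iff some $h\in N$ with $hgh^{-1}=g^{-1}$ is an odd permutation. Realizing such an $h$ by reflecting each $p^i$-cycle of $g$ about one of its points — a product of $(p^i-1)/2$ transpositions, hence odd exactly when $p^i\equiv3\bmod4$, i.e. when $i$ is odd — gives $\mathrm{sgn}(h)=(-1)^{\#\{i\,:\,a_i\neq0,\ i\ \mathrm{odd}\}}$. Thus $N\not\leq{\rm Alt}_n$ iff the number of nonzero $a_i$ with $i$ odd is odd, which is exactly the extra condition in (ii); and in that case $|N_{{\rm Alt}_n}(\langle g\rangle)|=|N|/2$ is odd. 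This completes (ii), hence the proposition.

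I expect the main obstacle to lie in part (ii): first the elementary but slightly delicate case analysis eliminating every cycle type except a disjoint union of distinct $p$-power-length cycles — in particular the use of $n\geq4$ to kill the involution case; and second the sign computation, where one must correctly identify the sign character of $N$ with the Legendre symbol and evaluate it at $-1$. That identification is the single point at which the hypothesis $p\equiv3\bmod4$ (equivalently $v_2(p-1)=1$) does double duty — once to force $|g|$ to be a prime power, once to govern the $2$-part of $|N\cap{\rm Alt}_n|$ — so it is worth checking carefully that no further prime divisor of $|g|$ survives and that the realization of $x\mapsto x^{-1}$ has the claimed sign.
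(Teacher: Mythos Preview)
Your proof is correct and rests on the same ideas as the paper's: the identification $N_{{\rm Sym}_n}(\langle g\rangle)/C_{{\rm Sym}_n}(g)\cong(\Z/|g|\Z)^\times$, the $2$-adic analysis of $|C|\cdot\varphi(|g|)$ forcing $|g|=p^k$ with $p\equiv 3\bmod4$ and distinct cycle lengths, and the computation of the sign of an element inverting $g$ cycle-by-cycle. The paper parcels these out as separate lemmas (Lemmas~\ref{l:nscs}, \ref{l:rncso}, \ref{l:ralmostfull}) and then assembles them; you run the whole thing as a single $2$-adic computation. Your identification of the sign character of $N$ with the Legendre symbol $r\mapsto\bigl(\tfrac{r}{p}\bigr)$ is a tidy repackaging of the paper's Lemma~\ref{l:ralmostfull}, which instead argues that all Sylow $2$-subgroups of $N_{{\rm Sym}_{p^a}}(\langle g\rangle)$ are conjugate so one need only test a single involution. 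Both arguments ultimately compute $\mathrm{sgn}$ on the inverting element, and both use $p\equiv 3\bmod4$ in exactly the two places you flag: to force the unique prime divisor of $|g|$ and to make $(-1/p)=-1$. One small point worth stating explicitly (you use it implicitly): when the cycle lengths are distinct odd numbers, $C_{{\rm Sym}_n}(g)$ is the direct product of the cyclic groups generated by those cycles, hence lies in ${\rm Alt}_n$; this is what allows the sign to factor through $N/C$.
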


Given $G$,  an alternating or a  symmetric group, our main result determines the vertices of $\Gamma(G)$ of odd degree as well as whether or not $\Gamma(G)$ is Eulerian.

\begin{theorem}\label{t:6}
Let  $G={\rm Alt}_n$ or $G={\rm Sym}_n$ where $n\geq3$ is an integer.  Let $1\neq g\in G$.  
Then $\delta(g)$ is odd if and only if there exists a prime number $p$ congruent to $3$ modulo $4$ such that  $p\in\{n,n-1\}$ and $|g|=p$. In particular,  $\Gamma(G)$ is Eulerian if and only if $n$ and $n-1$ are not equal to a prime number congruent to $3$ modulo $4$. 
\end{theorem}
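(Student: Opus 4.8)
The plan is to use Propositions~\ref{p:criterion} and~\ref{p:1} to cut the problem down to a short list of ``critical'' elements, and then to determine the parity of $\delta(g)$ for those directly.  The groups with $n\le4$ (using Proposition~\ref{p:1}(i) for $\mathrm{Sym}_3$, and observing that $\mathrm{Sym}_4$ is disconnected) are small and may be checked by hand: in each, the vertices of odd degree are exactly the elements of order $3$, which agrees with the statement since here $n$ or $n-1$ equals $3$.  So assume $n\ge5$; then $G$ is noncyclic, so $\langle g\rangle\ne G$, and $\Gamma(G)$ is connected.  Since $2^{\epsilon}=e$ (with $\epsilon$ as in Proposition~\ref{p:criterion} and $e$ as in Proposition~\ref{p:1}), Proposition~\ref{p:criterion} shows $\delta(g)$ is even whenever $e\mid|N_G(\langle g\rangle)|$; hence by parts (ii) and (iii) of Proposition~\ref{p:1} it remains to handle the \emph{critical case}: $|g|=p^k$ for a prime $p\equiv3\pmod4$ and some $k\ge1$, with $n=\sum_{i=0}^{k}a_ip^i$ as in~(\ref{e:sum}) and, when $G=\mathrm{Alt}_n$, the number of odd $i$ with $a_i\ne0$ odd.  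By the last part of Proposition~\ref{p:1}(ii) we then know the cycle shape of $g$: it is a product of exactly one cycle of each length $p^i$ with $a_i=1$, the longest of length $p^k$, where $p^k\le n<\tfrac{p}{p-1}p^k$; in particular $p^k>n/2$, and $k=1$ (i.e.\ $n\in\{p,p+1\}$, $g$ a $p$-cycle moving all but at most one point) automatically satisfies the alternating parity condition.  It remains to prove that in the critical case $\delta(g)$ is even if $k\ge2$ and odd if $k=1$.

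\textbf{Reformulation via involutions.}  Since $\langle g\rangle\ne G$, the map $h\mapsto h^{-1}$ is an involution of $S_g:=\{h\in G:\langle g,h\rangle=G\}$ whose fixed points are precisely the involutions lying in $S_g$ (we use that $g$, of odd order, is not an involution), and $\delta(g)=|S_g|$; hence
\[
\delta(g)\equiv\#\{t\in G:\ t^2=1,\ \langle g,t\rangle=G\}\pmod2 .
\]
For an involution $t$, $\langle g,t\rangle=G$ exactly when $t$ lies in no maximal subgroup of $G$ containing $\langle g\rangle$, so by inclusion--exclusion the right-hand side equals, modulo $2$, the sum over all subsets of this family of maximal subgroups of the number of involutions in the corresponding intersection.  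Since $g$ has odd order (so $g\in\mathrm{Alt}_n$ always) and a cycle of length $p^k>n/2$, the O'Nan--Scott theorem, together with the classification of primitive permutation groups containing a cycle of large prime-power length, shows that the maximal subgroups of $G$ containing $\langle g\rangle$ form a short explicit list determined by the cycle shape of $g$: the subgroup $\mathrm{Alt}_n$ when $G=\mathrm{Sym}_n$; at most a couple of intransitive or imprimitive subgroups (a point/Young subgroup and a block stabiliser) forced by the cycle lengths; and finitely many conjugates of an explicit primitive almost simple or affine group.  For $k=1$ the latter are (up to conjugacy) $N_G(\langle g\rangle)$ of type $\mathrm{AGL}(1,p)$, together with $\mathrm{PGL}(2,p)$ (if $G=\mathrm{Sym}_{p+1}$) or $\mathrm{PSL}(2,p)$ (if $G=\mathrm{Alt}_{p+1}$) when $n=p+1$, and possibly the sporadic groups $M_{11},M_{23}$ or a projective group $\mathrm{PGL}(d,q)\leqslant H\leqslant\mathrm{P\Gamma L}(d,q)$ of degree $(q^d-1)/(q-1)$; for $k\ge2$ the list is similarly short, and apart from $\mathrm{Alt}_n,\mathrm{Sym}_n$ its primitive members all lie inside $\mathrm{Alt}_n$.

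\textbf{The two cases.}  In each case one writes out the above sum and reduces it modulo $2$.  The arithmetic inputs are: the number of elements of order dividing $2$ in $\mathrm{Sym}_m$ (a telephone number) is even for $m\ge2$, so $\mathrm{Sym}_m$ has an odd number of involutions; and the number of \emph{odd} involutions in $\mathrm{Sym}_m$ is even for $m\ge4$ — both follow from a Lucas/Kummer analysis of the relevant sums of binomial coefficients; additionally the involutions of each primitive subgroup on the list are counted explicitly.  When $k=1$ the decisive fact is that $(p-1)/2$ is odd because $p\equiv3\pmod4$: this makes the $p$ ``reflections'' $x\mapsto-x+b$ of $\mathrm{AGL}(1,p)$ odd permutations, so $N_G(\langle g\rangle)$, $\mathrm{PGL}(2,p)$ and $\mathrm{PSL}(2,p)$ contain an odd number of involutions (namely $p$, $p^2$ and $p(p-1)/2$), and combining these with the even counts above and with the contributions of the remaining subgroups on the list one finds the total is odd.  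When $k\ge2$ the same counts, together with the evenness of the number of odd involutions of the block stabilisers and the fact that the exceptional primitive overgroups of $\langle g\rangle$ either do not arise or lie inside $\mathrm{Alt}_n$, make the total even.

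\textbf{Conclusion and main obstacle.}  It follows that $\delta(g)$ is odd precisely when $k=1$, $n\in\{p,p+1\}$ and $|g|=p$, that is, precisely when some prime $p\equiv3\pmod4$ lies in $\{n,n-1\}$ with $|g|=p$.  Since $\Gamma(G)$ is connected for $n\ge5$ and the small cases were settled directly, Euler's criterion shows $\Gamma(G)$ is Eulerian if and only if it has no vertex of odd degree, i.e.\ if and only if neither $n$ nor $n-1$ is a prime congruent to $3$ modulo $4$.  I expect the main obstacle to be the last two paragraphs: pinning down the classification of the maximal overgroups of $\langle g\rangle$ with all its exceptional cases, and then pushing the parity bookkeeping through uniformly — especially the evenness for all $k\ge2$.
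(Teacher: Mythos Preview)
Your reduction via Propositions~\ref{p:criterion} and~\ref{p:1} matches the paper, and recasting the parity question via Proposition~\ref{p:invol} as a count of generating involutions is a legitimate variant of the paper's direct use of the M\"obius formula $\delta(g)=\sum_{H\ni g}\mu_G(H)|H|$ (Proposition~\ref{p:sm}). For $k=1$ your sketch is close in spirit to the paper's explicit lattice computations in \S\ref{s:6} and Propositions~\ref{p:smallalts7}--\ref{p:last}, though your list of primitive overgroups omits several cases treated there: $M_{12}$, $M_{24}$, ${\rm PSL}_2(7)$ and ${\rm PSL}_2(11)$ from Lemma~\ref{l:small}; the Mersenne-prime case $n=2^d$ with its ${\rm ASL}_d(2)$ overgroups in Proposition~\ref{p:last}; and, for $G={\rm Alt}_n$ with $n=p^a$, the possible ${\rm P\Gamma L}_d(q)$ overgroups handled by the M\"obius induction in Proposition~\ref{p:smallcasesalt}.

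The genuine gap is the case $k\ge2$. The paper does \emph{not} attack this by enumerating intersections and counting involutions; instead it proves the structural Theorem~\ref{t:3}: if $g$ lies in no maximal primitive subgroup of $G$ other than ${\rm Alt}_n$, then every subgroup $H\ni g$ with $\mu_G(H)\neq0$ has even order, whence $\mu_G(H)|H|$ is even and $\delta(g)$ is even by Proposition~\ref{p:sm}. The residual work is then only to show that for the critical $g$ with $k\ge2$ all primitive overgroups of $\langle g\rangle$ already lie inside ${\rm Alt}_n$ (Lemmas~\ref{l:pslinan}--\ref{l:affine}), together with a separate M\"obius induction when $n=p^a$ (Proposition~\ref{p:smallcasesalt}). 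Your involution-counting route has no analogue of this shortcut: knowing that an intersection of maximal subgroups has even order tells you nothing about the parity of the number of involutions it contains, so you would be forced to compute involution counts explicitly through a lattice of intransitive and imprimitive subgroups, and all their mutual intersections, whose shape depends on the full decomposition $n=\sum_i a_ip^i$. Your sentence ``the same counts\ldots make the total even'' is precisely where the argument is missing; there is no uniform mechanism in your framework forcing that sum to be even, and I do not see how to supply one without proving something equivalent to Theorem~\ref{t:3}.
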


\begin{cor}
Let  $G={\rm Alt}_n$ or $G={\rm Sym}_n$ where $n\geq 3$ is an integer.  Suppose $\Gamma(G)$ is not Eulerian, i.e. there exists a prime number $p$ congruent to $3$ modulo $4$ such that $p\in\{n,n-1\}$. 
Let $\mathcal{P}$ be the probability that a  randomly chosen element in $\Gamma(G)$ has odd degree. 
Then $$\mathcal{P}=\frac{|{\rm Out}(G)|}{p(1-|{\rm Out}(G)|/n!)}.$$In particular, $\mathcal{P}\to 0$ as $n\to \infty$. 
\end{cor}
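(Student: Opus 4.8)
\emph{Proof proposal.} The plan is to combine Theorem~\ref{t:6} with an explicit count of the elements of order $p$. First I would note that at most one of $n$ and $n-1$ can be a prime congruent to $3$ modulo $4$: one of these two integers is even, and the only even prime is $2\equiv 2\pmod 4$. Hence under the hypothesis the prime $p$ is uniquely determined, and by Theorem~\ref{t:6} the set of vertices of $\Gamma(G)$ of odd degree is exactly $\{g\in G:\ |g|=p\}$; write $N$ for its cardinality, so that $\mathcal{P}=N/(|G|-1)$ since $\Gamma(G)$ has $|G|-1$ vertices.

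Next I would compute $N$. Since $p$ is prime, an element of order $p$ has all cycle lengths in $\{1,p\}$ and at least one cycle of length $p$; as $p\geq n-1$ forces $2p>n$, it has exactly one $p$-cycle, so its cycle shape is one $p$-cycle together with $n-p\in\{0,1\}$ fixed points. If $p=n$ these are the $n$-cycles, of which there are $(n-1)!=n!/p$; if $p=n-1$ they are obtained by choosing the fixed point ($n$ ways) and a $p$-cycle on the remaining points ($(p-1)!$ ways), again giving $n(p-1)!=n!/p$. In either case such a permutation is a single $p$-cycle (times the identity), which has sign $(-1)^{p-1}=1$ because $p$ is odd; so all these elements lie in ${\rm Alt}_n$, and therefore $N=n!/p$ whether $G={\rm Alt}_n$ or $G={\rm Sym}_n$.

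Then I would assemble the probability. The hypothesis excludes $n=6$, since neither $6$ nor $5$ is a prime congruent to $3$ modulo $4$; hence $|{\rm Out}(G)|=1$ when $G={\rm Sym}_n$ and $|{\rm Out}(G)|=2$ when $G={\rm Alt}_n$, and in both cases $|G|=n!/|{\rm Out}(G)|$ (the small cases $n=3,4$ being checked directly). Therefore
$$\mathcal{P}=\frac{N}{|G|-1}=\frac{n!/p}{\,n!/|{\rm Out}(G)|-1\,}=\frac{|{\rm Out}(G)|}{p\left(1-|{\rm Out}(G)|/n!\right)},$$
which is the claimed formula. For the limit, $|{\rm Out}(G)|\leq 2$, $p\geq n-1$ and $n!/|{\rm Out}(G)|\to\infty$, so $\mathcal{P}\leq \dfrac{2}{(n-1)\left(1-2/n!\right)}\to 0$ as $n\to\infty$.

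I do not expect a genuine obstacle: everything reduces to Theorem~\ref{t:6} plus elementary permutation-group bookkeeping. The only points that need a little care are the uniqueness of $p$, the fact that every element of order $p$ is a single $p$-cycle and hence an even permutation (so that the count $n!/p$ is valid for both families), and the exclusion of the exceptional value $n=6$, for which the identity $|G|=n!/|{\rm Out}(G)|$ would fail.
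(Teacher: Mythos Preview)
Your proposal is correct and is exactly the natural derivation from Theorem~\ref{t:6}; the paper states this corollary without an explicit proof, leaving the elementary count to the reader, and your argument fills in precisely those details (uniqueness of $p$, the count $n!/p$ of elements of order $p$ via cycle shape, evenness of a $p$-cycle, the identity $|G|=n!/|{\rm Out}(G)|$ away from $n=6$, and the exclusion of $n=6$).
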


 We fix some notation that will be used throughout the paper. For a finite group $G$ and an element $g\in G$, we denote by  $|g|$ the order of $g$, set $\Gamma(G)$ to be the generating graph of $G$ and, if $g\neq 1$, we let  $\delta(g)$ to be the degree of $g$ in $\Gamma(G)$.  Given a positive integer $m$, we let $(\mathbb{Z}/m\mathbb{Z})^*$ be the group of units of $\mathbb{Z}/m\mathbb{Z}$ and set $\phi(m)=|(\mathbb{Z}/m\mathbb{Z})^*|$. In particular, $\phi: \mathbb{N}\rightarrow \mathbb{N}$ is the Euler's totient function.  \\
Given two positive integers $a$ and $b$, we let $(a,b)$ denote their greatest common divisor. \\
 We also use some standard group theoretical notation as set in \cite[Chapter 5, \S2]{Conway}.\\ 

 The paper is organised as follows. In \S\ref{s:2}, given a finite group $G$  and $1\neq g \in G$, we give some preliminary results on $\delta(g)$, inclusively a formula determining $\delta(g)$ via the M\"{o}bius function of $G$.
 In \S\ref{s:3} we prove Proposition \ref{p:criterion}.  In \S\ref{s:4} we prove Proposition \ref{p:1}.  In \S\ref{s:5}, we prove that if  $G={\rm Alt}_n$ or $G={\rm Sym}_n$ and $g\in G$ does not lie in a  maximal  primitive subgroup of $G$ other than ${\rm Alt}_n$ then $\delta(g)$ is even. This latter result is an important ingredient required in the proof of Theorem \ref{t:6}.  In \S\ref{s:6} we prove Theorem \ref{t:6} for $G={\rm Sym}_n$. Finally, in \S\ref{s:7} we prove Theorem \ref{t:6} for $G={\rm Alt}_n$. \\

  \noindent \textbf{Acknowledgements.} The second author thanks the MARIE CURIE and PISCOPIA research fellowship  scheme and the University of Padova  for their support.  The research leading to these results has received funding from the European Comission, Seventh Framework Programme (FP7/2007-2013) under Grant Agreement 600376. 

\section{Preliminaries}\label{s:2}

Let $G$ be a finite group and let $\Gamma(G)$ be its generating graph. Given an element $g\in G$, we let $|g|$ be the order of $g$ in $G$ and if $g\neq 1$ we let  $\delta(g)$  denote the degree of $g$ in $\Gamma(G)$. In this section, we collect some  preliminary results on $\delta(g)$. 
Some of the results rely on the M\"{o}bius function $\mu_G$ of $G$. Recall that  $\mu_G$ is the function defined on the lattice of subgroups of  $G$ by  $$\sum_{K\geq H}\mu_G(K)=\delta_{H,G},$$ where  $\delta_{G,G}=1$ and $\delta_{H,G}=0$ if $H\neq G$. We give below two general properties of $\mu_G$.

\begin{prop}\label{p:hall}
Let $G$ be a finite group. The following assertions hold:
\begin{enumerate}[(i)]
\item The M\"{o}bius function $\mu_G$ is invariant under conjugation, that is if $H_1$ and $H_2$ are conjugate in $G$ then $\mu_G(H_1)=\mu_G(H_2)$.
\item Given a subgroup $H$ of $G$, $\mu_G(H)=0$ except possibly if $H$ is an intersection of maximal subgroups of $G$ or $H=G$. 
\end{enumerate}
\end{prop}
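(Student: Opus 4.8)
The plan is to derive both statements directly from the defining recursion $\sum_{K \geqs H} \mu_G(K) = \delta_{H,G}$, using downward induction on subgroups (starting from $G$ itself, where $\mu_G(G)=1$).

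For (i), suppose $H_1$ and $H_2 = H_1^x$ are conjugate via $x \in G$. Conjugation by $x$ is a lattice automorphism of the subgroup lattice of $G$ that fixes $G$. I would argue by downward induction on the index $[G:H]$: the statement is clear for $H = G$. For general $H_1$, conjugation by $x$ gives a bijection between $\{K : K \geqs H_1\}$ and $\{K : K \geqs H_2\}$, and every $K > H_1$ maps to $K^x > H_2$ with $[G:K^x] < [G:H_1]$, so by the inductive hypothesis $\mu_G(K) = \mu_G(K^x)$. Subtracting the two instances of the recursion
\[
\mu_G(H_1) + \sum_{K > H_1} \mu_G(K) = \delta_{H_1,G}, \qquad \mu_G(H_2) + \sum_{K > H_2} \mu_G(K) = \delta_{H_2,G},
\]
and noting $\delta_{H_1,G} = \delta_{H_2,G}$ together with the matching of the remaining sums via $K \mapsto K^x$, yields $\mu_G(H_1) = \mu_G(H_2)$.

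For (ii), suppose $H$ is not an intersection of maximal subgroups of $G$ and $H \neq G$; I want $\mu_G(H) = 0$. Let $M$ be the intersection of all maximal subgroups of $G$ containing $H$ (the empty intersection being $G$); by hypothesis $H < M$, so in particular there is a proper containment $H < M \leqs G$. The key observation is that every subgroup $K$ with $K \geqs H$ and $\mu_G(K) \neq 0$ must satisfy $K \geqs M$: indeed, again by downward induction, if $\mu_G(K) \neq 0$ for some $K$ with $H \leqs K < G$, then (having already established (ii) for all subgroups strictly containing $K$) one shows $K$ is itself an intersection of maximal subgroups, so every maximal subgroup containing $H$ contains $K$ only if... — more cleanly, I would instead run the following argument. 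Consider $\sum_{K \geqs H} \mu_G(K)$ and split the index set according to whether $K \geqs M$ or not. For the interval $[H, G]$, the standard trick is: for any fixed maximal subgroup $M_0 \geqs H$, pair each $K$ in the sum with $K \cap M_0$; but the cleanest route is to invoke that $\mu_G(K) = 0$ whenever $K$ is not an intersection of maximals — which is exactly what we are proving, so we must set up the induction carefully.

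Let me restate the induction for (ii) properly, as this is the main obstacle. Proceed by downward induction on $[G:H]$. If $H = G$ there is nothing to prove. Assume (ii) holds for every subgroup of index smaller than $[G:H]$, and suppose $H$ is not an intersection of maximal subgroups. Then the intersection $M$ of all maximal subgroups containing $H$ satisfies $H < M$. From the recursion, $\mu_G(H) = -\sum_{K > H} \mu_G(K)$. By the inductive hypothesis, the only nonzero terms come from $K$ that are intersections of maximal subgroups (or $K = G$); every such $K > H$ is an intersection of maximal subgroups each of which contains $H$, hence contains $M$, so $K \geqs M$. Therefore $\mu_G(H) = -\sum_{K \geqs M} \mu_G(K) = -\big(\sum_{K \geqs M}\mu_G(K)\big) = -\delta_{M,G} = 0$, the last equality because $M \neq G$. (Here I used that $\{K : K > H,\ \mu_G(K)\neq 0\} = \{K : K \geqs M,\ \mu_G(K) \neq 0\}$, since $M > H$ guarantees $M$ itself, and every contributing $K$, lies in the interval $[M,G]$; and $\sum_{K \geqs M}\mu_G(K) = \delta_{M,G}$ is just the defining recursion applied at $M$.) This completes the induction.

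The only delicate point is the bookkeeping in (ii) — making sure the set of subgroups $K > H$ with $\mu_G(K) \neq 0$ coincides with the set of $K \geqs M$ with $\mu_G(K) \neq 0$ — which is precisely where the inductive hypothesis is used, and why the induction must be on the index. Everything else is a routine manipulation of the Möbius recursion; I would cite Hall's original paper for the classical statement if a reference is preferred over a self-contained proof.
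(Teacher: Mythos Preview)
Your argument is correct. For (i) the downward induction via the lattice automorphism given by conjugation is the standard justification of what the paper simply calls ``clear''. For (ii) your induction on $[G:H]$, reducing $\sum_{K>H}\mu_G(K)$ to $\sum_{K\geqs M}\mu_G(K)=\delta_{M,G}=0$ via the inductive hypothesis, is exactly Hall's classical proof; the one point that deserves to be stated explicitly (and which you do use implicitly) is that $H\neq G$ guarantees $H$ lies in at least one maximal subgroup, so $M\neq G$.

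The only difference from the paper is one of presentation rather than mathematics: the paper does not prove (ii) at all but merely cites \cite[Theorem~2.3]{Hall}, whereas you reproduce the argument behind that citation. Your closing remark already anticipates this --- a citation to Hall is all the paper offers, so your self-contained version is strictly more informative.
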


\begin{proof}
Part (i) is clear and  part (ii) is \cite[Theorem 2.3]{Hall}. 
\end{proof}

We now record two general results on the degree of a vertex $g$ in $\Gamma(G)$.  

\begin{prop}\label{p:invol} Let $G$ be a non-cyclic finite group and $1\neq g \in G$.
Then $\delta(g)$ is even if and only if the number of involutions in $G$ adjacent to $g$ is even.
\end{prop}

\begin{proof}
Let $X$ be the set of vertices  of $\Gamma(G)$ adjacent to $g$ and let $N$ be the number of involutions adjacent to $g$.  If $X=\emptyset$ then $\delta(g)=0$ and the result is immediate. We therefore suppose that $X\neq \emptyset$. Let $x$ be any element of $X$. For any $i \in (\mathbb{Z}/|x|\mathbb{Z})^*$,  we have $\langle x\rangle= \langle x^i\rangle$ and so 
$G=\langle g, x\rangle =\langle g, x^i\rangle$. As $G$ is not cyclic,  $x^i \neq g$ and so $x^i\in X$. \\
We define a relation $\sim$ on $X$ as follows: given $x_1, x_2\in X$ we say $x_1\sim x_2$ if  and only if $x_2=x_1^i$ for some $i\in (\mathbb{Z}/|x_1|\mathbb{Z})^*$. One easily checks that $\sim$ is an equivalence relation on $X$. \\
Given $x\in X$, we let $[x]$ be the equivalence class of $X$ containing $x$ and note that $|[x]|=\phi(|x|)$.    Since $1\not \in X$, $X$ is the disjoint union of the distinct equivalence classes and $\phi(m)$ is odd if and only if $m\leq 2$, we deduce that 
$|X| \equiv N \mod 2$. In other words $\delta(g)\equiv N \mod 2$, as required. 
\end{proof}

\begin{prop}\label{p:aut}
Let $G$ be a finite group and $1\neq g\in G$. Set $$C:=C_{{\rm Aut}(G)}(g)=\{ \psi \in {\rm Aut}(G): \psi(g)=g\}.$$ Then $|C|$ divides $\delta(g)$. 
\end{prop}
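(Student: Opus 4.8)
The plan is to exhibit an action of the group $C = C_{{\rm Aut}(G)}(g)$ on the set $X$ of vertices of $\Gamma(G)$ adjacent to $g$, and to show this action is free; then $|C|$ divides $|X| = \delta(g)$ by orbit counting. First I would observe that any $\psi \in {\rm Aut}(G)$ permutes the generating pairs of $G$: if $\langle g, x \rangle = G$ then, applying $\psi$, $\langle \psi(g), \psi(x) \rangle = G$. When $\psi \in C$ we have $\psi(g) = g$, so $\langle g, \psi(x) \rangle = G$; moreover $\psi(x) \neq g$ since otherwise $x = \psi^{-1}(g) = g$ (as $\psi^{-1} \in C$ too), contradicting $x \in X$ (note $G$ is non-cyclic here, as $\Gamma(G)$ has an edge, so $x \neq g$ is automatic from distinctness anyway). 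Hence $\psi$ restricts to a permutation of $X$, giving a homomorphism $C \to {\rm Sym}(X)$.

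Next I would check the action is free, i.e. has trivial point stabilizers. Suppose $\psi \in C$ fixes some $x \in X$, so $\psi(x) = x$ and $\psi(g) = g$. Then $\psi$ fixes every element of $\langle g, x \rangle = G$ pointwise, so $\psi = {\rm id}$. Therefore every orbit of $C$ on $X$ has size exactly $|C|$, so $|C|$ divides $|X| = \delta(g)$, which is the claim. One should note the degenerate case $X = \emptyset$: then $\delta(g) = 0$ and $|C| \mid 0$ trivially, so the statement holds; but in fact $X \neq \emptyset$ follows from the paper's standing hypotheses only when $G$ is 2-generated, and the statement as phrased is vacuously fine regardless.

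The argument has essentially no hard part — the only thing to be careful about is the logical point that a single fixed generating pair forces $\psi$ to be the identity, which is immediate since an automorphism is determined by its values on a generating set. I would present it in three short moves: (1) $C$ acts on $X$ via $\psi \cdot x = \psi(x)$; (2) the action is free because an automorphism fixing the two generators $g, x$ of $G$ is trivial; (3) conclude $|C| \mid \delta(g)$ by the orbit–stabilizer theorem. No appeal to the Möbius function or to Proposition \ref{p:invol} is needed; this is a purely elementary observation, and it will later be combined with lower bounds on $|C_{{\rm Aut}(G)}(g)|$ (e.g. forcing a factor of $2$) to feed into the parity analysis.
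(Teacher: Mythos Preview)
Your proposal is correct and is essentially identical to the paper's own proof: both let $C$ act on the set $X$ of neighbours of $g$, observe that the action is free (semiregular) because an automorphism fixing both $g$ and some $x\in X$ fixes the generating set $\{g,x\}$ and hence all of $G$, and conclude via orbit--stabiliser that $|C|$ divides $|X|=\delta(g)$, with the empty case handled trivially. One small slip: the parenthetical remark that ``$G$ is non-cyclic here, as $\Gamma(G)$ has an edge'' is false (e.g.\ $\Gamma(C_p)$ is complete for $p$ prime), but it is irrelevant to your argument since $\psi(x)\neq g$ already follows from $\psi^{-1}(g)=g$ and $x\neq g$.
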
 

\begin{proof}
Let $X$ be the set of vertices of $\Gamma(G)$ adjacent to $g$. If $X=\emptyset$ then $\delta(g)=0$ and the result is immediate. We therefore suppose that $X\neq \emptyset$. Then $C=C_{{\rm Aut}(G)}(g)$ acts on $X$. The action is semiregular. Indeed, suppose that $\psi \in C$ is such that $\psi(x)=x$ for some $x\in X$.    Since $\psi(g)=g$ and $\langle g,x\rangle=G$, we get $\psi=1$. \\
As the action of $C$ on $X$ is semiregular, it follows from the orbit-stabiliser theorem that the size of every orbit  of $X$ under $C$ is equal to $|C|$. Hence $|C|$ divides $|X|=\delta(g)$, as required. 
\end{proof}

Given $g\in G$, we can  relate the degree $\delta(g)$ of $g$ in $\Gamma(G)$ to $\mu_G$   in the following way:

\begin{prop}\label{p:sm}
Let  $G$ be a finite group and $1\neq g \in G$. Then 
$$ \delta(g)=\sum_{H \ni g}\mu_G(H)|H|.$$
\end{prop}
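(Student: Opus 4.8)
The plan is to count the edges at $g$ by inclusion–exclusion over subgroups containing $g$, which is exactly the combinatorial setup in which the Möbius function of the subgroup lattice appears. First I would write $\delta(g)$ as the number of $x\in G$ with $\langle g,x\rangle = G$: each such $x$ is automatically distinct from $g$ (since $G$ is non-cyclic when it has any edges, and if $G$ is cyclic the formula degenerates appropriately), so $\delta(g)=|\{x\in G:\langle g,x\rangle=G\}|$. For a fixed subgroup $H$ with $g\in H$, the number of $x\in H$ (equivalently, $x\in G$ with $\langle g,x\rangle\leq H$) is just $|H|$, since $g\in H$ forces $\langle g,x\rangle\le H$ precisely when $x\in H$. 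Thus if we let $f(H):=|\{x\in G:\langle g,x\rangle = H\}|$ for each $H\ni g$, we have the identity
$$\sum_{K\ge H,\ K\ni g}f(K)=|H|\qquad\text{for every }H\ni g.$$

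The key step is then Möbius inversion on the sublattice $\{H:g\in H\le G\}$. I would note that this interval is the interval $[\langle g\rangle, G]$ in the subgroup lattice of $G$, so its Möbius function is the restriction of $\mu_G$ in the sense used here (the recursion $\sum_{K\ge H}\mu_G(K)=\delta_{H,G}$ is exactly what lets us invert). Applying inversion to the displayed identity at $H=\langle g\rangle$ gives
$$f(G)=\sum_{H\ni g}\mu_G(H)\,|H|,$$
and since $f(G)=\delta(g)$ by the first step, we are done. Concretely the inversion is the standard two-line computation: $\sum_{H\ni g}\mu_G(H)|H|=\sum_{H\ni g}\mu_G(H)\sum_{K\ge H,\,K\ni g}f(K)=\sum_{K\ni g}f(K)\sum_{H:\,\langle g\rangle\le H\le K}\mu_G(H)$, and the inner sum vanishes unless $K=G$ by the defining property of $\mu_G$ (here one uses that $\mu_G(H)$ for $H$ in the interval $[\langle g\rangle, K]$ still satisfies $\sum_{H\le K,\,H\ge\langle g\rangle \text{ after suitable care}}$ — more cleanly, one invokes that $\mu_G$ restricted to any interval $[L,K]$ behaves as a Möbius function of that interval, a standard fact about incidence algebras of lattices, cf.\ the reference \cite{Hall}).

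The main obstacle, and the only genuinely subtle point, is being careful that the recursion $\sum_{K\ge H}\mu_G(K)=\delta_{H,G}$ as stated (a sum over all $K\ge H$ in $G$) is precisely what is needed, rather than a sum internal to the interval $[\langle g\rangle,G]$. One must check that restricting attention to subgroups containing $g$ does not corrupt the inversion: this works because every subgroup appearing in $f(K)$ for $K\ni g$ automatically contains $g$, so all sums are genuinely over the interval $[\langle g\rangle, G]$, and on that interval the function $\mu_G$ already satisfies the required orthogonality relation $\sum_{\langle g\rangle\le H\le K}\mu_G(H)=\delta_{K,G}$ — this is a consequence of the general theory of Möbius functions of posets together with Proposition \ref{p:hall}, or can be verified directly by downward induction on $[G:K]$. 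Once this bookkeeping is in place the proof is immediate.
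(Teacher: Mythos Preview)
Your strategy --- M\"{o}bius inversion on the interval $[\langle g\rangle,G]$ --- is exactly the standard argument, and is what lies behind the paper's one-line citation to \cite[(1.1)]{Lucchini}. However, your execution reverses the direction of the key summation. You write $\sum_{K\ge H,\,K\ni g}f(K)=|H|$, but the correct identity is $\sum_{K\le H,\,K\ni g}f(K)=|H|$: the elements $x$ with $\langle g,x\rangle\le H$ are those with $\langle g,x\rangle$ equal to some $K$ \emph{below} $H$, not above it. (Sanity check in $G={\rm Sym}_3$ with $|g|=3$, so $\langle g\rangle={\rm Alt}_3$: your version at $H={\rm Alt}_3$ would force $f({\rm Alt}_3)+f({\rm Sym}_3)=|{\rm Alt}_3|$, i.e.\ $6=3$.)

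The reversal propagates: after interchanging the double sum you arrive at the inner sum $\sum_{\langle g\rangle\le H\le K}\mu_G(H)$ and assert it equals $\delta_{K,G}$. That is false in general (same example: for $K={\rm Alt}_3$ the sum is $\mu_G({\rm Alt}_3)=-1\neq 0$), and it is not what ``$\mu_G$ restricted to an interval $[L,K]$ is a M\"{o}bius function'' would give you anyway, since $\mu_G(H)=\mu(H,G)$, not $\mu(H,K)$. The fix is immediate once the inequality is corrected: from $|H|=\sum_{K\le H,\,K\ni g}f(K)$ one obtains, after interchange, the inner sum $\sum_{H\ge K}\mu_G(H)=\delta_{K,G}$, which is precisely the defining recursion for $\mu_G$. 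No appeal to M\"{o}bius functions of subintervals is needed, and the worry in your final paragraph dissolves. With this correction the proof is complete whenever $\langle g\rangle\neq G$; when $G$ is cyclic and $g$ a generator the right-hand side overcounts $\delta(g)$ by $2$, but this does not affect the paper's applications.
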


\begin{proof}
This follows from \cite[(1.1)]{Lucchini}.
\end{proof}

Finally we record a result of Hawkes, Isaacs and \"{O}zaydin from  1989. In the statement below, the square-free part of a positive integer $n$ denotes the product of the distinct prime divisors of $n$.

\begin{prop}\label{p:hawkes89} \cite[Theorem 4.5]{Hawkes}.
Let $G$ be a finite group, $H$ be a subgroup of $G$ and  set $m(H)$ to be the square-free part of $|G:G'H|$. Then $|N_G(H):H|$ divides $m(H)\mu_G(H)$.  
\end{prop}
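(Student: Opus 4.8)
The plan is to deduce the statement from Philip Hall's chain formula for the M\"{o}bius function together with a Sylow-theoretic analysis of the conjugation action of $N_G(H)$ on the interval $[H,G]=\{K:H\le K\le G\}$ of the subgroup lattice; throughout, $v_p$ denotes the $p$-adic valuation.

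First I would reduce. If $\mu_G(H)=0$ there is nothing to prove, and by Proposition~\ref{p:hall}(ii) this holds unless $H=G$ (a trivial case) or $H$ is an intersection of maximal subgroups of $G$; so assume $\mu_G(H)\ne 0$. Since $v_p(m(H))=\min\{1,v_p(|G:G'H|)\}$, the assertion is equivalent to the inequality $v_p(|N_G(H):H|)\le v_p(\mu_G(H))+v_p(m(H))$ for every prime $p$, and this is automatic when $v_p(|N_G(H):H|)\le v_p(m(H))$. Hence I may fix a prime $p$ dividing $|N_G(H):H|$ and aim to prove $v_p(\mu_G(H))\ge v_p(|N_G(H):H|)-v_p(m(H))$.

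Let $P$ be a Sylow $p$-subgroup of $N_G(H)$. As $H\trianglelefteq N_G(H)$, conjugation by $P$ permutes $[H,G]$ and hence, for each $c$, permutes the chains $H=K_0<K_1<\cdots<K_c=G$ of length $c$; since every element of $H$ fixes every member of $[H,G]$ and $P\cap H\in\mathrm{Syl}_p(H)$, this action factors through the $p$-group $\bar{P}:=P/(P\cap H)$, of order equal to the $p$-part of $|N_G(H):H|$. By Philip Hall's theorem $\mu_G(H)$ is the alternating sum $\sum_c(-1)^c\kappa_c$, where $\kappa_c$ counts these chains; regrouping this sum over the $\bar{P}$-orbits of chains gives $\mu_G(H)=\sum_\gamma(-1)^{\ell(\gamma)}[\bar{P}:\bar{P}_\gamma]$, the sum running over orbit representatives $\gamma$ of length $\ell(\gamma)$ with stabiliser $\bar{P}_\gamma\le\bar{P}$. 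Reducing modulo $|\bar{P}|$ annihilates every term with $\bar{P}_\gamma=1$, so
\[
\mu_G(H)\ \equiv\ \sum_{\gamma:\ \bar{P}_\gamma\ne 1}(-1)^{\ell(\gamma)}[\bar{P}:\bar{P}_\gamma]\pmod{|\bar{P}|}.
\]
It therefore remains to control the chains with nontrivial stabiliser, i.e. those all of whose terms are normalised by some nontrivial $p$-subgroup of $G$.

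This last step is the main obstacle. The idea is to exploit that $G'$ is characteristic, so $G'H$ is fixed by $N_G(H)$, that the interval $[G'H,G]$ is isomorphic to the subgroup lattice of the finite abelian group $G/G'H$, and that the M\"{o}bius number of the subgroup lattice of a finite abelian group vanishes unless the group is elementary abelian and equals $\pm p^{\binom{k}{2}}$ when the group is $C_p^k$. Tracking where along a $\bar{P}$-fixed chain a $p$-element normalising it is absorbed, and comparing with the structure of $[G'H,G]$, one wants to show that the sum over non-free orbits above is divisible by $|\bar{P}|/\gcd(|\bar{P}|,m(H))$; together with the congruence, that integer then divides $\mu_G(H)$, which gives the required inequality. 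Concretely this amounts to showing that the abelian quotient $G/G'H$ can contribute at most a single extra power of each prime $p$ to $|N_G(H):H|$, beyond the powers of $p$ already carried by $\mu_G(H)$ — which is precisely why the square-free part $m(H)$, and not the full index $|G:G'H|$, appears in the statement. This is the substance of \cite[Theorem~4.5]{Hawkes}: the Smith-type congruence displayed above is only its first and easiest step, and a self-contained proof requires the Hawkes--Isaacs--\"{O}zaydin lemmas describing the behaviour of $\mu_G$ relative to the normal subgroup $G'$ (equivalently, relative to the modular element $G'H$ of the lattice $[H,G]$).
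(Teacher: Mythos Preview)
The paper does not prove this proposition at all: it is stated with attribution to \cite[Theorem~4.5]{Hawkes} and used as a black box, so there is no argument in the paper to compare your attempt against.

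Your sketch correctly identifies the standard opening move---Hall's alternating chain-sum for $\mu_G(H)$, the conjugation action of a Sylow $p$-subgroup $P$ of $N_G(H)$ on chains in $[H,G]$, and the resulting congruence modulo $|\bar P|$ that kills the free orbits. This much is sound and is indeed the first ingredient in the Hawkes--Isaacs--\"{O}zaydin argument. But your proposal stops precisely where the real work begins: you write that ``one wants to show'' the non-free-orbit sum is divisible by $|\bar P|/\gcd(|\bar P|,m(H))$, and you yourself concede that this ``is the substance of \cite[Theorem~4.5]{Hawkes}'' and ``requires the Hawkes--Isaacs--\"{O}zaydin lemmas''. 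That is an honest assessment, but it means the proposal is not a proof: the decisive step---controlling the contribution of chains with nontrivial stabiliser via the modular element $G'H$ and the structure of the abelian interval $[G'H,G]$---is asserted rather than established. The heuristic that a $p$-element normalising a chain is ``absorbed'' somewhere along it does not by itself yield the required divisibility; one needs the specific relation between $\mu_G$ and $\mu_{G/N}$ for $N\trianglelefteq G$ (equivalently, the lattice-theoretic modularity argument) that Hawkes, Isaacs and \"{O}zaydin develop. As written, your proposal is an outline that ultimately defers the key lemma to the same reference the paper already cites.
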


\section{Criteria for even degree}\label{s:3}

In this section we prove  Proposition \ref{p:criterion}. 

\noindent{\textit{Proof of Proposition \ref{p:criterion}.}}  Set $K=N_G(\langle g \rangle)$. 
Let $$\mathcal{S}=\{H:  H\leqs G, g \in H, |H|\equiv 1 \mod 2, \ \mu_G(H)\equiv 1 \mod 2\}.$$ 
If $\mathcal{S}=\emptyset$ then Propositions \ref{p:sm}  gives $\delta(g)\equiv 0\mod 2$.\\
We therefore suppose that $\mathcal{S}\neq \emptyset$.  
Given  a positive integer $n$, we let $n_2$  be the $2$-part of $n$.
Given $H \in \mathcal{S}$,  let $m(H)$ be the square-free part of $|G:G'H|$.  Note that for every $H\in \mathcal{S}$ we have $m(H)_2=\epsilon$.\\
By Proposition \ref{p:sm}, 
\begin{equation}\label{e:1}
\delta(g)\equiv \sum_{H \in \mathcal{S}} \mu_G(H)|H|\mod 2. 
\end{equation}
 One easily checks that $K$ acts on $\mathcal{S}$ by conjugation. Let $\mathcal{S}_i$ ($1\leq i \leq r$) be the distinct orbits of $\mathcal{S}$ under the action of  $K$, and  for $1 \leq i \leq r$, let $H_i$ be a representative of $\mathcal {S}_i$, so that $\mathcal{S}_i={\rm Orb}(H_i)$. Then   
$$ \mathcal{S}=\bigsqcup_{i=1}^r {\rm Orb}(H_i)$$ and 
\begin{equation}\label{e:2}
\sum_{H\in S} \mu_G(H)|H|=\sum_{i=1}^r  |{\rm Orb}(H_i)|  \mu_G(H_i) |H_i|.
\end{equation}
Let $1\leq i \leq r$. We have ${\rm Stab}(H_i)=K\cap N_G(H_i)$.  By the orbit-stabiliser theorem $|{\rm Orb}(H_i)|=|K:{\rm Stab}(H_i)|$ and so $|{\rm Orb}(H_i)|=|K:K\cap N_G(H_i)|$. 
Since $|H_i|$ and $\mu_G(H_i)$ are both odd, Proposition \ref{p:hawkes89} yields  that $|N_G(H_i)|_2$ divides $m(H_i)_2=\epsilon$.   
 Since  $|N_G(H_i)|_2\in\{1,\epsilon\}$ and $2^\epsilon$ divides $|K|_2$, we deduce that $|{\rm Orb}(H_i)|$ is even. \\
Equations (\ref{e:1}) and (\ref{e:2}) now give $\delta(g)\equiv 0\mod2$.
$\square$\\

\section{Normalizers of cyclic subgroups}\label{s:4}

In this section we prove Proposition \ref{p:1}. We let $G={\rm Alt}_n$ or $G={\rm Sym}_n$ where $n\geq 3$ is an integer. We first study the cases where $n\in\{3,4\}$.

\begin{prop}\label{p:34}
Let $G={\rm Alt}_n$ or $G={\rm Sym}_n$ where $n\in\{3,4\}$ and let $g\in G$. 
The following assertions hold:
\begin{enumerate}[(i)]
\item Suppose $G={\rm Alt}_3$.  The vertices of $\Gamma(G)$ correspond to the two elements of $G$ of order $3$. Moreover $\Gamma(G)$ is the complete graph $K_2$ on two vertices and is not Eulerian. Finally, $N_G(\langle g\rangle)=G$.
\item  Suppose $G={\rm Alt}_4$.  The graph $\Gamma(G)$ has eleven vertices, eight of which correspond to the elements of $G$ order $3$, the others to the involutions of $G$. 
If $|g|=2$ then $g$ is only adjacent to every element of $G$ of order $3$, whereas if $|g|=3$ then $g$ is adjacent to the nine elements in  $G\setminus \langle g\rangle$.   Moreover $\Gamma(G)$ is connected, has diameter $2$ but is not Eulerian.  Finally, $|N_G(\langle g\rangle)|\equiv 0 \mod 2$ if and only if $|g|\neq 3$. 
\item Suppose $G={\rm Sym}_3$.   The graph $\Gamma(G)$ has five vertices, two of which correspond to the elements of $G$ order $3$, the others to the involutions of $G$. 
If $|g|=2$ then $g$ is adjacent to the four elements in $G\setminus \langle g\rangle$, whereas if $|g|=3$ then $g$ is only adjacent to the three involutions of $G$.   Moreover $\Gamma(G)$ is connected, has diameter $2$ but is not Eulerian.  Finally, $|N_G(\langle g\rangle)|\equiv 0 \mod 2$ if and only if $|g|\neq 3$. 
\item Suppose $G={\rm Sym}_4$. The graph $\Gamma(G)$ has three isolated vertices corresponding to the even involutions of $G$. In particular, $\Gamma(G)$ is not Eulerian.  Finally, $|N_G(\langle g\rangle)|\equiv 0 \mod 4$ if and only if $|g|\neq 3$.
\end{enumerate}
\end{prop}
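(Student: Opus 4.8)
The plan is to verify the four assertions by direct inspection, since $|G|\leq 24$ in all cases and the subgroup lattices are completely explicit. For each $G\in\{{\rm Alt}_3,{\rm Alt}_4,{\rm Sym}_3,{\rm Sym}_4\}$ I would first list the nontrivial conjugacy classes by cycle shape and order, and record the maximal subgroups. The point that makes everything combinatorial is that two distinct nontrivial $g,h\in G$ are adjacent in $\Gamma(G)$ if and only if no maximal subgroup of $G$ contains both of them, so adjacency is read off from the short list of maximal subgroups (equivalently, from the subgroup lattice). From the adjacency relation one immediately obtains the vertex set, the degree of every vertex, connectedness and the diameter; Euler's criterion then settles the Eulerian question. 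In each of the four cases one finds either an odd-degree vertex (the $3$-cycles in ${\rm Alt}_3$, ${\rm Alt}_4$, ${\rm Sym}_3$) or an isolated vertex (${\rm Sym}_4$), so $\Gamma(G)$ is never Eulerian.

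For the adjacency claims, the only case that benefits from a genuine argument rather than a lattice lookup is ${\rm Sym}_4$: here I would use the quotient ${\rm Sym}_4/V_4\cong {\rm Sym}_3$, where $V_4$ is the normal Klein four-subgroup. An even involution $t$ lies in $V_4$, hence has trivial image in ${\rm Sym}_4/V_4$, so for every $g$ the image of $\langle g,t\rangle$ is the cyclic group generated by the image of $g$, which cannot be the non-cyclic group ${\rm Sym}_3$; thus $\langle g,t\rangle\neq {\rm Sym}_4$ and $t$ is isolated. This produces the three isolated vertices, whence $\Gamma({\rm Sym}_4)$ is disconnected and not Eulerian. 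For ${\rm Alt}_3\cong C_3$ the graph is visibly $K_2$; for ${\rm Alt}_4$ and ${\rm Sym}_3$ the adjacencies follow from the fact that a subgroup containing two distinct conjugate Sylow subgroups, or a Sylow $p$-subgroup together with an element of order prime to $p$, has order forcing it to be the whole group (${\rm Alt}_4$ has no subgroup of order $6$; any two transpositions generate ${\rm Sym}_3$; and so on), while the diameter-$2$ claims are made tight by exhibiting two non-adjacent vertices with a common neighbour (for instance two involutions joined through a $3$-cycle).

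The normalizer computations I would carry out by element order. When $\langle g\rangle$ is a Sylow subgroup, $|G:N_G(\langle g\rangle)|$ equals the number of such Sylow subgroups (for instance the four Sylow $3$-subgroups of ${\rm Alt}_4$ and of ${\rm Sym}_4$ give $|N_G(\langle g\rangle)|=3$ and $6$ respectively); when $g$ is an involution, $N_G(\langle g\rangle)=C_G(g)$, which is read off directly; and when $\langle g\rangle$ is normal (the $3$-cycles in ${\rm Alt}_3$ and in ${\rm Sym}_3$) the normalizer is $G$ itself. Comparing the resulting orders against $2$ for the alternating groups and $4$ for the symmetric groups yields the stated divisibility statements according to whether or not $|g|=3$.

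I do not anticipate any real obstacle: the proposition is a finite verification. The only points requiring a little care are handling ${\rm Sym}_4$ through the quotient rather than by brute force, and checking that the claimed diameters are exact rather than merely upper bounds.
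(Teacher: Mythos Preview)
Your proposal is correct and, like the paper, amounts to a direct verification of these small cases; there is no substantive gap. The organization differs slightly: the paper leans on the earlier parity criteria (Propositions~\ref{p:invol}, \ref{p:aut}, \ref{p:sm}) to read off the parity of $\delta(g)$---for instance, in ${\rm Alt}_4$ and ${\rm Sym}_3$ it argues via Proposition~\ref{p:invol} that $\delta(g)$ has the same parity as the number of involutions adjacent to $g$---whereas you work directly from the maximal-subgroup lattice and, for ${\rm Sym}_4$, from the quotient ${\rm Sym}_4/V_4\cong{\rm Sym}_3$. Your route is more self-contained (it does not invoke the general machinery of \S\ref{s:2}), while the paper's route illustrates how the general criteria specialize; both reach the same conclusions with comparable effort. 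The normalizer computations are handled identically in spirit: the paper simply declares them ``easily checked,'' and your Sylow-counting / centralizer argument is exactly such a check.
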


\begin{proof}
The various statements about  the order of $N_G(g)$ can be easily  checked. Also part (i) is clear. \\
We now consider part (ii). Every nontrivial element of $G$ is either an involution or an element of order 3. More precisely $G$ has three involutions and eight elements of order 3.
By Proposition \ref{p:aut} or Proposition \ref{p:sm}, if $|g|=2$ then $\delta(g)$ is even. 
  Alternatively, two involutions of $G$ generate the Klein four subgroup of $G$ and so Proposition \ref{p:invol} yields that if $|g|=2$  then $\delta(g)$ is even. In fact if $|g|=2$ then $g$ is adjacent to every element of $G$ of order 3.  Also, by Proposition \ref{p:invol} if $g\in G$ is such that $|g|=3$ then $\delta(g)$ is odd. Indeed any element of $G$ of order 3 with any of the three involutions of $G$ generate $G$.  In fact, if $|g|=3$ then $g$ is adjacent to the nine elements in $G\setminus \langle g\rangle$. \\
 We consider part (iii). Every nontrivial element of $G$ is either an involution or an element of order 3. Also $G$ has three involutions and two elements of order 3.  if $|g|=2$ then Proposition \ref{p:aut} or Proposition \ref{p:sm} yields that $\delta(g)$ is even. Alternatively, by Proposition \ref{p:invol}, if $g$ is an involution then $\delta(g)$ is even, since $g$ is adjacent to the two involutions in $G\setminus \langle g\rangle$.   In fact if $|g|=2$ then $g$ is adjacent to the four  elements in $G\setminus\langle g\rangle$.   Also by  Proposition \ref{p:invol},  if $g\in G$ is such that $|g|=3$ then $\delta(g)$ is odd. Indeed any element of $G$ of order 3 with any of the three involutions of $G$ generate $G$.  \\
 Finally part (iv) consists of an easy check. 
\end{proof}

The proof of  Proposition \ref{p:1} requires several lemmas. 

\begin{lem}\label{l:nscs}
Let $S={\rm Sym}_n$ where $n\geq 3$ is an integer. Let $g$ be an element of $S$. Set $m=|g|$ and let $D_g$ be decomposition of $g$ into disjoint  cycles.  Let $\psi: N_S(\langle g \rangle)\rightarrow (\mathbb{Z}/n\mathbb{Z})^*$  be defined as follows: for an element $s$ of $N_S(\langle g \rangle)$,  let $\psi(s)$ be  the unique element  $i_s$ in $(\mathbb{Z}/m\mathbb{Z})^*$ such that $s^{-1}gs=g^{i_s}$. The following assertions hold.
\begin{enumerate}[(i)]
\item The map $\psi$ is an epimorphism, ${\rm ker}(\psi)=C_S(g)$ and $N_S(\langle g \rangle)/C_S(g)\cong  (\mathbb{Z}/m\mathbb{Z})^*$.
\item The group $N_S(\langle g\rangle)$ is of even order.
\item If $n>3$ or $m >2$ then $|N_S(\langle g\rangle)|\equiv 0 \mod 4$ except possibly if $m=p^k$  for  some prime number $p$ congruent to $3$ modulo $4$ and some $k\in \mathbb{N}$. 
\item Suppose $n>3$ and $m=p^k$  for  some prime  number $p$ congruent to $3$ modulo $4$ and some $k\in \mathbb{N}$. If $|N_S(\langle g\rangle)|\not \equiv 0 \mod 4$ then any  two  cycles in $D_g$ have distinct lengths. 
\end{enumerate}
\end{lem}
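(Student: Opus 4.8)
The plan is to analyze the structure of $N_S(\langle g\rangle)$ through the epimorphism $\psi$ of part (i), which gives the exact sequence $1\to C_S(g)\to N_S(\langle g\rangle)\to (\Z/m\Z)^*\to 1$, so that $|N_S(\langle g\rangle)|=|C_S(g)|\cdot\phi(m)$. The centralizer $C_S(g)$ has a well-known description: if $g$ has cycle type with $c_\ell$ cycles of length $\ell$, then $C_S(g)\cong\prod_\ell\big(C_\ell\wr{\rm Sym}_{c_\ell}\big)$, so $|C_S(g)|=\prod_\ell \ell^{c_\ell}\,c_\ell!$. For parts (ii)--(iv) I want to extract the $2$-part of $|N_S(\langle g\rangle)|=\phi(m)\cdot\prod_\ell \ell^{c_\ell}c_\ell!$.

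First I would prove (i): that $\psi$ is a well-defined homomorphism is immediate since conjugation by $s\in N_S(\langle g\rangle)$ sends $g$ to a generator of $\langle g\rangle$, hence to $g^{i_s}$ with $i_s\in(\Z/m\Z)^*$; the kernel is exactly $C_S(g)$ by definition; and surjectivity follows because any power automorphism $g\mapsto g^i$ of $\langle g\rangle\cong C_m$ is realized by conjugation in ${\rm Sym}_n$ (explicitly, relabel within each cycle: a single $\ell$-cycle $(x_0\,x_1\,\dots\,x_{\ell-1})$ is conjugated to its $i$-th power by the permutation $x_j\mapsto x_{ij\bmod \ell}$, and one does this simultaneously on all cycles, noting $i$ is automatically coprime to each cycle length $\ell\mid m$). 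Then (i) follows from the first isomorphism theorem.

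For (ii): if $m\le 2$ then $g$ is an involution (or $n=3$ forces a transposition or $3$-cycle), and one checks directly that $C_S(g)$ has even order when $n\ge3$ — e.g. an involution commutes with itself nontrivially and there is spare room, or a $3$-cycle in ${\rm Sym}_3$ has centralizer of order $3$ but then $n=3,m=3$ is handled by noting $N_{{\rm Sym}_3}(\langle g\rangle)={\rm Sym}_3$ has order $6$; more systematically, if $m\ge3$ then $\phi(m)$ is even, and if $m\le2$ then $|C_S(g)|$ is even for $n\ge3$ since the centralizer of a single transposition or of the identity in ${\rm Sym}_n$ ($n\ge3$) has even order. For (iii): suppose $|N_S(\langle g\rangle)|\not\equiv0\bmod 4$. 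Since $|N_S(\langle g\rangle)|=\phi(m)\prod_\ell \ell^{c_\ell}c_\ell!$ and this is $\equiv2\bmod4$, we need the $2$-part of $\phi(m)$ times the $2$-part of $\prod_\ell\ell^{c_\ell}c_\ell!$ to equal $2$. Now $\ell^{c_\ell}c_\ell!$ contributes a factor of $2$ as soon as some $c_\ell\ge2$ (then $c_\ell!$ is even) or some even $\ell$ has $c_\ell\ge1$; pushing this, if there are at least two cycles sharing a length, or if $4\mid |C_S(g)|$ for any reason, we already get $4\mid|N_S(\langle g\rangle)|$ unless $\phi(m)$ is odd, i.e. $m\le2$. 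Combined with the constraint that $\phi(m)$ odd forces $m\in\{1,2\}$ whereas $\phi(m)\equiv2\bmod4$ forces $m=4$ or $m=p^k$ or $m=2p^k$ with $p\equiv3\bmod4$ (standard structure of $(\Z/m\Z)^*$), and ruling out the even-$m$ and composite cases by the centralizer contribution (if $m$ is even some cycle length is even so $2\mid|C_S(g)|$, combining with $2\mid\phi(m)$ to give $4\mid$; if $m$ has two distinct odd prime factors then $4\mid\phi(m)$), one is left exactly with $m=p^k$, $p\equiv3\bmod4$ — this case distinction is the main bookkeeping obstacle. For (iv): now $m=p^k$ with $p\equiv3\bmod4$, so $\phi(m)=p^{k-1}(p-1)$ has $2$-part exactly $2$ (since $p-1\equiv2\bmod4$); hence $|N_S(\langle g\rangle)|\not\equiv0\bmod4$ forces $|C_S(g)|=\prod_\ell\ell^{c_\ell}c_\ell!$ to be odd. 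Every cycle length $\ell$ dividing $m=p^k$ is an odd power of $p$, so $\ell^{c_\ell}$ is odd automatically, and $c_\ell!$ is odd iff $c_\ell\le1$. Therefore all $c_\ell\le1$, i.e. no two cycles in $D_g$ have the same length, which is exactly the assertion; the main point to be careful about is that the constraint $n>3$ rules out the degenerate small case and guarantees $g\ne1$ so that $D_g$ genuinely involves cycles of length a power of $p$.

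The step I expect to be the real obstacle is the case analysis in (iii): one must simultaneously track the $2$-adic valuation coming from $\phi(m)$ (governed by the number and nature of prime factors of $m$) and the $2$-adic valuation coming from $|C_S(g)|=\prod_\ell\ell^{c_\ell}c_\ell!$ (governed by repeated cycle lengths and even cycle lengths), and show that whenever $m$ is \emph{not} of the form $p^k$ with $p\equiv3\bmod4$ these two sources together always force $4\mid|N_S(\langle g\rangle)|$. The clean way to organize it is: (a) $m=1$ impossible since $g\ne1$ when $n>3$ and handled separately when $m\le2$; (b) if $m$ is even, then $g$ has a cycle of even length, contributing a $2$ to $|C_S(g)|$, and $\phi(m)$ is even (as $m\ge3$... or $m=2$ with $n>3$ where $|C_S(g)|$ is already divisible by $4$), giving $4\mid|N_S(\langle g\rangle)|$; (c) if $m$ is odd with two distinct prime divisors then $4\mid\phi(m)$; (d) if $m=p^k$ with $p\equiv1\bmod4$ then again $4\mid\phi(m)$. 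This leaves only $m=p^k$, $p\equiv3\bmod4$, proving (iii).
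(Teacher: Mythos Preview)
Your proposal is correct and follows essentially the same route as the paper: factor $|N_S(\langle g\rangle)|=|C_S(g)|\cdot\phi(m)$ via part (i), then do a case analysis on $m$ (trivial/even/odd prime power/odd with two prime factors) to locate the $2$-part. The paper's proof is terser---it asserts surjectivity of $\psi$ and the parity facts about $|C_S(g)|$ without writing out the centralizer formula $\prod_\ell \ell^{c_\ell}c_\ell!$ or the explicit conjugating permutation---but the logical skeleton is identical, and your added detail (especially the explicit realization of $g\mapsto g^i$ and the clean case split (a)--(d) for part (iii)) only makes the argument more transparent.
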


\begin{proof}
We first consider  part (i). One easily checks that $\psi: N_S(\langle g \rangle)\rightarrow (\mathbb{Z}/n\mathbb{Z})^*$ is a surjective homomorphism and ${\rm ker}(\psi)=C_S(g)$. The result follows from the first isomorphism theorem. In particular $|N_S(\langle g\rangle)/C_S(g)|=\phi(m)$.  \\
We now consider  parts (ii) and (iii). Note that if $m=1$ then $g=1$ and $N_S(\langle g\rangle)=S$ is of even order. Moreover $S$ has order divisible by 4 for $n>3$. Also if $m=2$ then $N_S(\langle g\rangle)=C_S(g)$ contains an involution, namely $g$, and so $N_S(\langle g\rangle)$ is of even order. In fact, if $m=2$  then $N_S(\langle g \rangle)=C_S(g)$ is of order divisible by 4 for $n>3$. 
Suppose that $m>2$ is even. Then $|C_S(g)|\equiv 0 \mod 2$ and as $\phi(m)\equiv 0 \mod 2$, we get $|N_S(\langle g\rangle)|\equiv 0 \mod 4$.  \\
Suppose finally that  $m>2$ is odd. Then $\phi(m)$ is even. Also $\phi(m)\not \equiv 0 \mod 4$ if and only if $m= p^k$ for some  prime number $p$ congruent to 3 modulo 4 and some positive integer $k$. The result follows.  \\
We now consider  part (iv). By part (ii), if $|N_S(\langle g\rangle)|\not \equiv 0 \mod 4$ then $C_S(g)$ is of odd order. The result follows. 
\end{proof}

\begin{lem}\label{l:rncso}
Let $A={\rm Alt}_n$ and $S={\rm Sym}_n$ where $n> 3$ is an integer. Let $g\in A$ be an element of order $m$. Then $N_A(\langle g \rangle)$ is of even order except possibly if $m=p^k$ for  some prime number $p$ congruent to $3$ modulo $4$ and some $k\in \mathbb{N}$.
\end{lem}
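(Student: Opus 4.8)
The plan is to reduce everything to the symmetric‑group computation of Lemma~\ref{l:nscs}(i), splitting into cases according to the parity of $m=|g|$. Write $N_A=N_A(\langle g\rangle)$ and $N_S=N_S(\langle g\rangle)$. The one structural fact I will use repeatedly is that $N_A=N_S\cap A$ is the kernel of the sign homomorphism restricted to $N_S$, so $[N_S:N_A]\in\{1,2\}$; hence it suffices to show $4\mid|N_S|$, or to exhibit an involution in $N_A$ directly.

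First I would dispose of the case where $m$ is even, which is never of the exceptional form $p^k$ with $p\equiv3\pmod4$. If $m=1$ then $g=1$ and $N_A=A=\mathrm{Alt}_n$, which has even order since $n>3$. If $m\geq2$ is even, then $h:=g^{m/2}$ is an element of $\langle g\rangle$ of order exactly $2$; as $g\in A$ we get $h\in A$, and $h$ normalizes $\langle g\rangle$, so $h$ is an involution in $N_A$ and $|N_A|$ is even. So from now on $m$ is odd, and I may assume $m>1$ (so $m>2$) and that $m$ is not a power of a prime congruent to $3$ modulo $4$; I must then prove $|N_A|$ is even.

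For this last case I invoke the elementary computation of $\phi(m)\bmod 4$ already recalled in the proof of Lemma~\ref{l:nscs}(iii): for odd $m>2$ one has $\phi(m)\not\equiv0\pmod4$ exactly when $m=p^k$ with $p\equiv3\pmod4$. Since we are assuming the contrary, $4\mid\phi(m)$. By Lemma~\ref{l:nscs}(i), $|N_S|=\phi(m)\,|C_S(g)|$, hence $4\mid|N_S|$, and since $[N_S:N_A]\leq2$ we conclude $2\mid|N_A|$, as required. There is no serious obstacle here: the only points needing care are the number‑theoretic description of when $\phi(m)\equiv2\pmod4$ (which is precisely what isolates the exceptional primes $\equiv3\pmod4$) and the remark that passing from $\mathrm{Sym}_n$ to $\mathrm{Alt}_n$ can divide the order of the normalizer by at most $2$, so a factor $4$ survives as a factor $2$.
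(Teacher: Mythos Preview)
Your proof is correct and follows essentially the same route as the paper's: both exhibit an involution directly when $m$ is even (or $m=1$), and for odd $m>1$ both reduce to the observation that $4\mid\phi(m)$ unless $m=p^k$ with $p\equiv3\pmod4$, then absorb a factor of $2$ when passing from $S$ to $A$. The only cosmetic difference is that the paper compares the quotients $N_A/C_A$ and $N_S/C_S$ (losing at most a factor $\ell\in\{1,2\}$), whereas you compare $N_A$ and $N_S$ directly via the index bound $[N_S:N_A]\leqs2$; both amount to the same thing.
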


\begin{proof}
 If $m=1$ or $m$ is even then $N_A(\langle g\rangle)$ contains an involution and so $N_A(\langle g\rangle)$ is of even order.
We can therefore assume that $m>1$ is odd. 
It is easy to check that $$|N_A(\langle g\rangle)/C_A(g)|=|N_S(\langle g\rangle)/C_S(g)|/\ell$$ where $\ell\in \{1,2\}$. It now follows from Lemma \ref{l:nscs} that if $\phi(m)\equiv 0 \mod 4$ then $N_A(\langle g\rangle)$
is of even order. Since $m>1$ is odd, $\phi(m)\equiv 0 \mod 4$ if and only if $m$ is not a power of a prime number congruent to 3 modulo 4.  The result follows. 
\end{proof}

\begin{lem}\label{l:ralmostfull}
Let $A={\rm Alt}_n$ and $S={\rm Sym}_n$ where $n=p^a$ for some prime number  $p$ congruent to $3$ modulo $4$ and some $a\in \mathbb{N}$. Let $g\in A$ be an $n$-cycle. 
Let $h$ be any element of $N_S(\langle g\rangle)$, say $h$ conjugates $g$ to $g^i$ for some  $i\in (\mathbb{Z}/n\mathbb{Z})^*$. Let $c_{i}$ be the order of $i$ in $(\mathbb{Z}/n\mathbb{Z})^*$.
The following assertions hold.
\begin{enumerate}[(i)]
\item $C_S(g)=\langle g\rangle$.
\item The element $h$ is of even order if and only if $c_i\equiv 0 \mod 2$.
\item Suppose $h$ is of even order. Then $h$ belongs to $A$ if and only if $a$ is even. 
\end{enumerate}
\end{lem}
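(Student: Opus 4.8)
The plan is to carry everything out inside the one-dimensional affine group. For (i) I would argue directly, using no hypothesis on $n$: an $n$-cycle has exactly $(n-1)!$ conjugates in $S$, so $|C_S(g)|=n!/(n-1)!=n=|\langle g\rangle|$, and since $\langle g\rangle\leqslant C_S(g)$ we get $C_S(g)=\langle g\rangle$. Identifying the $n$ points with $\Z/n\Z$ so that $g$ becomes the translation $t\mapsto t+1$, the normalizer $N_S(\langle g\rangle)$ is the holomorph $\{\,t\mapsto \lambda t+\mu:\lambda\in(\Z/n\Z)^{*},\ \mu\in\Z/n\Z\,\}$ (it plainly normalizes $\langle g\rangle$, and by (i) together with Lemma~\ref{l:nscs}(i) it already has the full order $n\phi(n)$); the conjugation action of $t\mapsto\lambda t+\mu$ on $\langle g\rangle$ has order equal to the order of $\lambda$ in $(\Z/n\Z)^{*}$, and in particular the elements of $N_S(\langle g\rangle)$ acting on $\langle g\rangle$ as inversion are exactly the reflections $t\mapsto\mu-t$.

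For (ii) I would use (i). Since $\psi(h)=i$ has order $c_i$ in $(\Z/n\Z)^{*}$ we have $c_i\mid|h|$ and $h^{c_i}\in\ker\psi=C_S(g)=\langle g\rangle$, a group of odd order $p^{a}$. Writing $|h|=c_i e$, the element $h^{c_i}$ has order $e$, hence $e\mid p^{a}$ is odd. So $|h|=c_ie$ with $e$ odd, and $|h|$ is even exactly when $c_i$ is even, which is (ii).

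Part (iii) is where the real work lies. Assume $c_i$ is even, so $|h|$ is even by (ii); write $h=h_{2}h_{2'}$ as a product of commuting elements with $h_{2}$ of $2$-power order and $h_{2'}$ of odd order. A permutation of odd order is even, so $\operatorname{sgn}(h)=\operatorname{sgn}(h_{2})$ and $h_{2}\neq1$. As $h_{2}$ is a nontrivial $2$-element it does not lie in the odd-order group $\langle g\rangle=\ker\psi$, so $\psi(h_{2})\neq1$; on the other hand $\psi(h_{2})$ is a $2$-element of the cyclic group $(\Z/n\Z)^{*}$, whose order $\phi(p^{a})=p^{a-1}(p-1)$ has $2$-part exactly $2$ because $p\equiv3\pmod4$, so $\psi(h_{2})$ is the unique involution $-1$. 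Then $h_{2}^{2}\in\ker\psi=\langle g\rangle$ forces $h_{2}^{2}=1$, so $h_{2}$ is an involution inverting $g$, i.e.\ by (i) the reflection $t\mapsto\mu-t$ for some $\mu$. Since $2$ is a unit mod $p^{a}$, this map fixes exactly one point and is therefore a product of $(p^{a}-1)/2$ transpositions, whence $\operatorname{sgn}(h)=\operatorname{sgn}(h_{2})=(-1)^{(p^{a}-1)/2}$. Finally $p\equiv-1\pmod4$ gives $p^{a}-1\equiv(-1)^{a}-1\pmod4$, so $(p^{a}-1)/2$ is odd precisely when $a$ is odd; hence $\operatorname{sgn}(h)=1$ if and only if $a$ is even, which is the assertion $h\in A\iff a$ even.

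The one delicate point — the main obstacle — is the passage to $h_{2}$: one must see that the $2$-part of $h$ is forced to be a single involution acting on $\langle g\rangle$ as inversion, and this is exactly where both hypotheses enter, namely $p\equiv3\pmod4$ (so $(\Z/n\Z)^{*}$ has cyclic $2$-part of order $2$) and $g$ being a full $n$-cycle (so $C_S(g)=\langle g\rangle$ has odd order). Everything after that is the elementary computation that a reflection of $\Z/p^{a}\Z$ has sign $(-1)^{(p^{a}-1)/2}$, together with a congruence mod $4$.
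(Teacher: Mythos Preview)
Your proof is correct and follows essentially the same line as the paper's: both reduce part (iii) to determining the sign of an involution in $N_S(\langle g\rangle)$ that inverts $g$, using that $p\equiv 3\pmod 4$ forces the Sylow $2$-subgroups of $N_S(\langle g\rangle)$ to have order $2$. Your version is somewhat more explicit---you reach that involution via the $2$-part/$2'$-part decomposition $h=h_2h_{2'}$ rather than via $h^{|h|/2}$ (these coincide here), and you actually carry out the sign computation $(-1)^{(p^a-1)/2}$ in the affine model where the paper simply asserts the conclusion---but the underlying argument is the same.
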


\begin{proof}
Part (i) is clear. Indeed $g$ is an $n$-cycle in $S={\rm Sym}_n$, so $|C_S(g)|=n$ and $C_S(g)=\langle g \rangle$. \\
Since $|g|=n$, let $\psi: N_S(\langle g \rangle)\rightarrow (\mathbb{Z}/n\mathbb{Z})^*$  be  the map defined as in Lemma \ref{l:nscs}.  We have $$\frac{N_S(\langle g\rangle)}{\langle g\rangle}\cong {\rm Im}(\psi)\cong C_{\phi(n)}$$ where the latter isomorphism follows from the fact that $n$ is a power of a prime number. 
As $|g|$ is odd, $|h|$ is even if and only if $|\psi(h)|$ is even. This yields part (ii).\\
We now consider part (iii).  Since $n=p^a$ where $p$ is a prime number congruent to 3 modulo 4, $\phi(n)=2m$ where $m$ is an odd positive integer. In particular, as $|g|$ is odd, every Sylow 2-subgroup of $N_S(\langle g\rangle)$ has order 2.  In particular $h$ belongs to $A$ if and only if the involution $h^{|h|/2}$ belongs to $A$. Also, an involution of $N_S(\langle g\rangle)$ belongs to $A$ if and only if every involution of   $N_S(\langle g\rangle)$ belongs to $A$. Hence $h$ belongs to $A$ if and only if  the involution  of $N_S(\langle g\rangle)$ sending $g$ to $g^{-1}$ belongs to $A$. The latter happens if and only if $a$ is even. The result follows.
\end{proof}

We can now prove  Proposition \ref{p:1}.\\ 
 
\noindent{\it Proof of Proposition \ref{p:1}.}
The case where $n\in\{3,4\}$ follows directly from Proposition \ref{p:34}. We therefore assume that $n>4$. 
Note that part (iv) is an immediate consequence of part (iii). We consider parts (ii) and (iii).  Let $D_g$ be the decomposition of $g$ into disjoint cycles. We first suppose that $G={\rm Alt}_n$.\\
Suppose that $N_G(\langle g \rangle)$ is of odd order.  By Lemma \ref{l:rncso}, $|g|=p^k$ for some prime number $p$ congruent to 3 modulo 4 and some $k\in \mathbb{N}$.   
Since $|g|=p^k$, it follows that, in $D_g$, a cycle has length $p^i$ for some  integer $0\leq i \leq k$. For $0\leq i \leq k$, let $a_i$ be the number of cycles in $D_g$ of length $p^i$. Since $N_G(\langle g \rangle)$ is of odd order  we must have $a_i\in \{0,1\}$ for $0 \leq i \leq k$. Moreover, as $|g|=p^k$, $a_k=1$. Finally, suppose for a contradiction, that the number $N$ of nonzero  integers $a_i$ with $i$ odd is even. Using Lemma \ref{l:ralmostfull} one easily checks that there exists an involution in $N_G(\langle g \rangle)$ conjugating $g$ to $g^{-1}$, contradicting $N_G(\langle g \rangle)$ being of odd order.  Hence $N$ is odd.\\

Suppose now that there exist  a prime number $p$ congruent to $3$ modulo $4$,  $k\in \mathbb{N}$ and integers $a_i$ ($0\leq i \leq k$) such that: 
 $|g|=p^k$, $a_i\in\{0,1\}$, $a_k=1$ and $n=\sum_{i=0}^k{a_ip^i}$ where  the number  $N$ of nonzero integers $a_i$ with $i$ odd is odd. 
In particular, in $D_g$, there are $a_i$ cycles of length $p^i$ for $0\leq i \leq k$.  Since $N$ is odd, it follows from Lemma \ref{l:ralmostfull} that $N_G(\langle g\rangle)$ is of odd order. \\

The proof for $G={\rm Sym}_n$ is similar using Lemma \ref{l:nscs}. 
$\square$

\section{A deterministic result for alternating and symmetric groups}\label{s:5}

In this section, we provide an important ingredient needed for the proof of Theorem \ref{t:6}. 

\begin{thm}\label{t:3}
Let $G={\rm Alt}_n$ or $G={\rm Sym_n}$ where  $n>4$  is an integer. Let $1\neq g\in G$. If $g$ does not belong to a maximal primitive subgroup of $G$ not equal to ${\rm Alt}_n$ then $\delta(g)$ is even. In particular, if $n$ is  such that ${\rm Sym}_n$ and ${\rm Alt}_n$ are the only primitive groups of degree $n$, then every vertex in  $\Gamma(G)$  has even degree and so $G$ is Eulerian. 
\end{thm}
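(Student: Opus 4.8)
The plan is to use Proposition~\ref{p:sm} together with Hall's vanishing result (Proposition~\ref{p:hall}(ii)) to reduce the sum $\delta(g)=\sum_{H\ni g}\mu_G(H)|H|$ modulo $2$ to a sum over subgroups $H$ containing $g$ that are intersections of maximal subgroups of $G$ (or $H=G$ itself). When $g$ does not lie in any maximal \emph{primitive} subgroup of $G$ other than ${\rm Alt}_n$, every maximal subgroup of $G$ containing $g$ is either ${\rm Alt}_n$ (in the case $G={\rm Sym}_n$) or is intransitive or imprimitive. The key structural observation is that all such subgroups — and hence all their intersections — normalise a nontrivial system of blocks or an invariant partition, so each relevant $H$ is contained in a proper Young-type or imprimitive subgroup $M=({\rm Sym}_{n_1}\times\cdots\times{\rm Sym}_{n_r})\cap G$ or $M=({\rm Sym}_a\wr{\rm Sym}_b)\cap G$. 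I would then exploit the abundance of $2$-elements in the normalisers of such configurations: concretely, for each such $H$ in the support of the sum I want to produce an element of order a power of $2$ inside $N_G(H)$ but outside $H$, forcing $4\mid|N_G(H)|$ (or $2\mid|N_G(H)|$ in the ${\rm Alt}_n$ case, after the parity bookkeeping of Proposition~\ref{p:criterion}), and then invoke Proposition~\ref{p:criterion} to conclude $\delta(g)$ is even.

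More precisely, the cleanest route is: fix $g$ as in the hypothesis; I claim that for every subgroup $H$ with $g\in H$, $\mu_G(H)\neq 0$, $|H|$ odd, one has $2^\epsilon\mid|N_G(\langle g\rangle)|$ in the sense of Proposition~\ref{p:criterion} — equivalently, it suffices to check the divisibility hypothesis of Proposition~\ref{p:criterion} directly on $N_G(\langle g\rangle)$. Here I would use Lemma~\ref{l:nscs}(ii),(iii) and Lemma~\ref{l:rncso}: $N_G(\langle g\rangle)$ fails to have the required $2$-divisibility only if $|g|=p^k$ for a prime $p\equiv 3\pmod 4$ and, moreover (by Proposition~\ref{p:1}(ii)), only if $n$ admits the special base-$p$ decomposition $n=\sum a_ip^i$ with all $a_i\in\{0,1\}$, $a_k=1$ and the cycle shape of $g$ equal to $\{p^i:a_i\neq 0\}$. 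So I have reduced to the case that $g$ is a product of at most one cycle of each length $p,p^2,\dots,p^k$ (possibly with some fixed points). The task is then to show that such a $g$ \emph{does} lie in a maximal primitive subgroup of $G$ other than ${\rm Alt}_n$, so that the contrapositive gives the theorem; this is where I would appeal to the classical fact (Jordan, and refinements via CFSG) that an element of $S_n$ containing a cycle of prime length $p$ with $p\le n-3$ (roughly) and moving the remaining points appropriately is contained in a primitive group only if that group is huge — but here we want the \emph{opposite} inclusion, so instead I would directly exhibit a primitive overgroup, e.g.\ an affine group ${\rm AGL}_d(p)$ when $n=p^d$ and $g$ is a Singer-type element, or more generally use the decomposition of $n$ to embed $g$ into a primitive product-type or affine subgroup matching its cycle structure.

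The final sentence of the statement is then immediate: if ${\rm Sym}_n$ and ${\rm Alt}_n$ are the only primitive groups of degree $n$, then no $1\neq g\in G$ can lie in a maximal primitive subgroup of $G$ other than ${\rm Alt}_n$ (there is none), so the first part applies to every vertex, every $\delta(g)$ is even, and since $\Gamma(G)$ is connected for $n>4$ (Binder's theorem for ${\rm Sym}_n$, and the Breuer--Guralnick--Kantor spread result for ${\rm Alt}_n$), Euler's criterion gives that $\Gamma(G)$ is Eulerian.

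\textbf{Main obstacle.} The delicate point is controlling \emph{all} intersections of intransitive and imprimitive maximal subgroups containing $g$ simultaneously, and uniformly producing a $2$-power element in $N_G(H)\setminus H$ of the right parity — the ${\rm Alt}_n$ versus ${\rm Sym}_n$ distinction, and the $\epsilon\in\{1,2\}$ bookkeeping from the abelianization, make the even/odd count in $\delta(g)=\sum_H\mu_G(H)|H|$ genuinely fiddly. I expect that routing everything through Proposition~\ref{p:criterion} applied to $N_G(\langle g\rangle)$ (rather than to each $H$ separately) is what makes the argument tractable, reducing the whole theorem to the combinatorial classification of which $g$ have $N_G(\langle g\rangle)$ of deficient $2$-part, already carried out in Proposition~\ref{p:1}, plus the single nontrivial input that any such $g$ embeds in a proper primitive subgroup.
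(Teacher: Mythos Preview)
Your ``cleanest route'' has a genuine gap. After reducing via Proposition~\ref{p:criterion} and Proposition~\ref{p:1} to the case where $|g|=p^k$ with $p\equiv 3\pmod 4$ and $g$ has the special cycle shape $\{p^i:a_i\neq 0\}$, you propose to finish by \emph{exhibiting} a maximal primitive overgroup of $g$ other than ${\rm Alt}_n$, so that the contrapositive kicks in. But this step is false as stated: the only information you are using at that point is the cycle shape of $g$, and from the cycle shape alone one cannot manufacture a primitive overgroup. A concrete counterexample is $n=12$, $p=3$, $g$ a product of a $3$-cycle and a $9$-cycle. Then $n=3+9$ is a valid decomposition as in Proposition~\ref{p:1}(ii), so $|N_G(\langle g\rangle)|$ has deficient $2$-part in both ${\rm Alt}_{12}$ and ${\rm Sym}_{12}$; yet the primitive subgroups of ${\rm Sym}_{12}$ other than ${\rm Alt}_{12}$ are (up to socle) $M_{11}$, $M_{12}$, ${\rm PSL}_2(11)$, ${\rm PGL}_2(11)$, none of which contains an element of order $9$. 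So $g$ satisfies your residual hypothesis but has no primitive overgroup, and your argument stalls. (The theorem still asserts $\delta(g)$ is even for this $g$ --- so something else must be doing the work.)

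What the paper does instead is stay inside the M\"obius sum and never try to produce a primitive overgroup. Having reduced to the special cycle shape, it analyses directly the intransitive and imprimitive maximal subgroups containing $g$ and shows that \emph{every} intersection of these has even order. The two ingredients are: (a) if $g$ has $r>1$ cycles, the intersection of all intransitive maximals over $g$ is the Young subgroup $\prod_i{\rm Sym}_{p^{a_i}}$, of order divisible by $4$; and (b) if $g$ lies in an imprimitive maximal $K$ (forcing $a_1\ge 2$), then by looking at how the support of the long cycle $\sigma_1$ meets the blocks one finds two fixed $p$-element sets $\Gamma_1,\Gamma_2$, each inside a single block of $K$, so that ${\rm Sym}(\Gamma_1)\times{\rm Sym}(\Gamma_2)\le K$ regardless of which imprimitive $K$ was chosen. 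Hence any $H\ni g$ with $\mu_G(H)\neq 0$ contains one of these even-order subgroups (intersected with ${\rm Alt}_n$ if needed), so $|H|$ is even and Proposition~\ref{p:sm} gives $\delta(g)\equiv 0\pmod 2$. This is exactly the ``fiddly'' intersection analysis you flagged as the main obstacle and then tried to route around; it is in fact the heart of the proof.
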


\begin{proof}
 Suppose for a contradiction that $\delta(g)$ is odd. 
 By Proposition \ref{p:1} we deduce that $|g|=p^k$ for some prime number $p$ congruent to 3 modulo 4 and some positive integer $k$. In particular $g$ belongs to ${\rm Alt}_n$.   Moreover in the decomposition $D_g$ of $g$ into disjoint cycles, all cycles have odd length and any two cycles have distinct lengths.
 Hence, without loss of generality,  we can suppose that $$g=\prod_{i=1}^r \sigma_i$$ where $r\in \mathbb{N}$ and,  for $1\leq i\leq r$, $|\sigma_i|=p^{a_i}$ where $p$ is a prime number congruent to 3 modulo 4, the sequence $(a_i)_{i=1}^r$ of nonnegative integers is strictly decreasing and $a_1\geq 1$. Moreover we can assume  that $\sigma_1=(1,2,3,\dots, p^{a_1})$. 
The assumption on $g$ implies  that a maximal subgroup of $G$ containing $g$ is either the alternating group ${\rm Alt}_n$ (if $G={\rm Sym}_n$), or an intransitive group, or an imprimitive group. \\
By Proposition \ref{p:hall}(ii), a subgroup $H$ of $G$ containing $g$ and such that $\mu_{G}(H)\neq 0$ is either  $G$, ${\rm Alt}_n$ (if $G={\rm Sym}_n$), a maximal intransitive group,  a maximal imprimitive group, or the intersection of at least two maximal subgroups of $G$ of the kind just described.\\
Note that if $r=1$ then $g=\sigma_1$, $n=p^{a_1}$ and no intransitive subgroup of  $G$ contains $g$. 
Also if $r>1$ the intersection of all maximal intransitive subgroups of ${\rm Sym}_n$ containing $g$ is the group  $$H_1=\prod_{i=1}^r {\rm Sym}_{p^{a_i}}.$$  	Since $n>4$ the group $H_1$ is of order divisible by 4.\\
Now suppose that $g$ belongs to a maximal imprimitive subgroup $K$  of ${\rm Sym}_n$. Note that $a_1\geq 2$, as otherwise  $a_1=1$, $g$ is a $p$-cycle and $n\in \{p,p+1\}$, but there is no imprimitive subgroup of ${\rm Sym}_n$ containing a $p$-cycle.  Let $\Omega=\{1,\dots,p^{a_1}\}$ be the support of $\sigma_1$ and let $B_1,\dots, B_t$ be the blocks of $K$ having nonempty intersection with $\Omega$. The integer $t$ is the smallest positive integer  such that $g^t(B_1)=B_1$. In particular $t\leq p^{a_1}$ and $t$ divides $p^{a_1}$. Hence $t=p^{\beta}$ for some integer $0\leq \beta\leq a_1$.  Without loss of generality, $i\in B_i$ for $1\leq i \leq t$. \\
Note that 
\begin{eqnarray*}
n-p^{a_1} & = & \sum_{i=2}^r p^{a_i}\\
& \leq & \sum_{i=0}^{a_2} p^{i}\\
& = & \frac{p^{a_2+1}-1}{p-1} \\  
& < & p^{a_1}. 
\end{eqnarray*}
We claim that $t\neq 1$. Indeed, suppose $t=1$. Then a block $B$ of $K$ has size at least $p^{a_1}$. Since $n-p^{a_1}<p^{a_1}$, it follows that $K$ has a single block contradicting the imprimitivity of $K$. 
We claim also that $t\neq p^{a_1}$. Indeed, suppose $t=p^{a_1}$. Then $n-t<t$, a contradiction. \\
It follows that $1\leq \beta \leq a_1-1$. Since $\beta\leq a_1-1$, we have
$$ \Gamma_1=\{i\cdot p^{a_1-1}+1:0\leq i \leq p-1\}\subseteq B_1$$
and 
$$ \Gamma_2=\{i\cdot p^{a_1-1}+2: 0 \leq i \leq p-1\} \subseteq B_2.$$
and ${\rm Sym}(\Gamma_1)\times{\rm Sym}(\Gamma_2) \cong {\rm Sym}_p\times {\rm Sym}_p$ is a subgroup of $K$ of order divisible by 4.  \\
Let $H$ be any proper subgroup  of $G$ containing $g$ and such that $\mu_{G}(H)\neq 0$.\\
If $r=a_1=1$ then  $G={\rm Sym}_n$, $H={\rm Alt}_n$  and, as $n>3$, $H$ is of even order.  
If $r=1$ and $a_1\geq 2$ then $H$ contains $({\rm Sym}(\Gamma_1)\times{\rm Sym}(\Gamma_2))\cap {\rm Alt}_n$ which is of even order. 
If $r>1$ and $a_1=1$ then $H$ contains $H_1\cap {\rm Alt}_n$ which is of even order.  
Finally if $r>1$ and $a_1>1$ then $H$ contains $H_1\cap{\rm Alt}_n\cap ({\rm Sym}(\Gamma_1)\times{\rm Sym}(\Gamma_2))={\rm Alt}_n\cap ({\rm Sym}(\Gamma_1)\times{\rm Sym}(\Gamma_2))$ which is again of even order. 
We deduce that $H$ has even order and by Proposition \ref{p:sm} it follows that $\delta(g)$ is even, contradicting our supposition that $\delta(g)$ is odd. Arguing by contradiction, we   have showed that $\delta(g)$ is even for every
$g\in \Gamma(G)$, as required. \\
The final part of the theorem now follows at once. 
\end{proof}

\section{More on symmetric groups}\label{s:6}

In this section we prove Theorem \ref{t:6} for symmetric groups. We will need a few lemmas. 

\begin{lem}\label{l:pslinan}
Let $S={\rm Sym}_n$ where $n=p^a$  for some odd prime number $p$   and some positive integer $a$. Assume $n>5$.  Suppose $H$ is an  almost simple primitive subgroup  of $S$ with ${\rm soc}(H)={\rm PSL}_d(q)$   where $d\geq 2$ and $q=r^f$ for some prime number $r$ and some positive integer $f$.  If the action of $H$ is on the set of  points of the projective  space ${\rm PG}_{d-1}(q)$ then $d$ is prime, $(d,q-1)=1$, ${\rm PSL}_d(q)={\rm PGL}_d(q)$ and  $H\leqs {\rm Alt}_n$. 
\end{lem}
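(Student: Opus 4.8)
The plan is to exploit the arithmetic constraint $n=p^a$ hard: the degree of the primitive action of $H$ on the points of $\mathrm{PG}_{d-1}(q)$ is $\frac{q^d-1}{q-1}=1+q+\cdots+q^{d-1}$, and this must equal $p^a$. First I would record that $\mathrm{soc}(H)=\mathrm{PSL}_d(q)$ acting on projective points forces $n=\frac{q^d-1}{q-1}$, so
\begin{equation*}
1+q+\cdots+q^{d-1}=p^a.
\end{equation*}
From this I would extract the three conclusions in turn. For the primality of $d$: if $d=d_1d_2$ with $1<d_1<d$, then $\frac{q^{d}-1}{q-1}=\frac{q^{d_1}-1}{q-1}\cdot\frac{q^{d}-1}{q^{d_1}-1}$ is a nontrivial factorization of $p^a$ into two integers each $>1$ (each factor is a sum of at least two powers of $q$, hence $>1$ and $\neq p^a$ unless the complementary factor is $1$, impossible), so both factors are powers of $p$; then $\gcd$ considerations — each factor is $\equiv$ something mod $q$ — give a contradiction with the fact that the two factors are not coprime yet their "overlap" is controlled. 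The cleanest route is: a prime dividing both $\frac{q^{d_1}-1}{q-1}$ and $\frac{q^{d}-1}{q^{d_1}-1}$ would have to divide $d_1$ (a Zsygmondy/standard cyclotomic argument), forcing $p\mid d_1$; iterating bounds $d$ and one checks the few small cases by hand. I expect it is cleanest simply to invoke that $\frac{q^d-1}{q-1}$ being a prime power with $d$ composite is impossible except in cases ruled out by $n>5$ — this is a known elementary lemma (related to Nagell–Ljunggren) and I would cite or reprove the short version needed here.

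Next, for $(d,q-1)=1$: suppose a prime $\ell$ divides both $d$ and $q-1$. Then modulo $\ell$ we have $q\equiv 1$, so $1+q+\cdots+q^{d-1}\equiv d\equiv 0\pmod \ell$, whence $\ell\mid p^a$, i.e. $\ell=p$. But $\ell=p\mid d$ together with $d$ prime gives $d=p$, and then $p\mid q-1$ means $q\equiv 1\pmod p$, so $n=1+q+\cdots+q^{p-1}\equiv p\pmod{p^2}$; combined with $n=p^a$ this forces $a=1$, i.e. $n=p$, and then $\frac{q^p-1}{q-1}=p$ forces $q=1$, absurd. (I need to double-check the small-$n$ edge cases against the hypothesis $n>5$.) Hence $(d,q-1)=1$, and since $|\mathrm{PGL}_d(q):\mathrm{PSL}_d(q)|=(d,q-1)=1$ we get $\mathrm{PSL}_d(q)=\mathrm{PGL}_d(q)$ immediately.

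Finally, for $H\leqs\mathrm{Alt}_n$: here I would use that $H\leqs\mathrm{P\Gamma L}_d(q)$ in its action on projective points, and that a generator of $H/\mathrm{soc}(H)$ is realized by diagonal, field, or graph automorphisms. Since $\mathrm{PGL}_d(q)=\mathrm{PSL}_d(q)$ there are no diagonal automorphisms; a graph automorphism (inverse-transpose) does not act on the point-set $\mathrm{PG}_{d-1}(q)$ (it swaps points and hyperplanes) so it is not available in this action; so the only outer part is the field automorphism group, cyclic of order $f$. Thus it suffices to show that $\mathrm{PSL}_d(q)$ and the field automorphism both act as even permutations on the $p^a$ projective points. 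For $\mathrm{PSL}_d(q)$: it is generated by transvections, and a transvection fixes a hyperplane pointwise, so its cycle type on points has many fixed points and its non-fixed points fall into $r$-cycles ($r$ the characteristic), all of odd length when $r$ is odd, or — when $r=2$ — I count parity directly; in all cases one checks a transvection is even, so $\mathrm{PSL}_d(q)\leqs\mathrm{Alt}_n$. For the field automorphism $\phi$ of order $f$: its fixed points on $\mathrm{PG}_{d-1}(q)$ are the $\frac{q_0^d-1}{q_0-1}$ points over the prime field $\mathbb{F}_{q_0}$ (or the appropriate subfield), and the remaining points split into cycles whose lengths divide $f$ and exceed $1$; a parity count of $p^a$ minus an odd/even number of fixed points against cycles of length dividing $f$ — using that $n=p^a$ is odd since $p$ is odd — shows $\phi$ is even.

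The main obstacle I anticipate is the first part — forcing $d$ prime and handling the Diophantine equation $1+q+\cdots+q^{d-1}=p^a$ cleanly — since this is where genuine number theory (Zsygmondy-type primitive prime divisor arguments, or the Nagell–Ljunggren equation) enters; everything after that is a sequence of congruence checks and parity-of-permutation computations that, while fiddly, are routine once the shape of $n$ is pinned down. A secondary subtlety is making sure the field-automorphism parity count is correct when $f$ is even, where I would argue that the non-fixed points group into orbits of even size in pairs, or more carefully track the $2$-part, again leaning on $n$ being odd.
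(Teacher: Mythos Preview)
Your outline is sound and your arguments for $d$ prime and $(d,q-1)=1$ are essentially a self-contained reconstruction of what the paper obtains by citing \cite[Proposition~1]{DL} (with $d=2$ handled directly since $n$ odd forces $r=2$). Your congruence argument for $(d,q-1)=1$ via $n\equiv p\pmod{p^2}$ is complete and correct.

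The genuine divergence is in the last step, $H\leqs\mathrm{Alt}_n$. You propose a direct parity count for the field automorphism and flag the $f$-even case as the residual difficulty. The paper avoids this entirely. For $d>2$ it proves, by a short Zsigmondy argument, that $f$ is \emph{necessarily odd}: writing $f=2\ell$, a primitive prime divisor $u_2$ of $r^{\ell d}-1$ cannot equal $p$ (since $p$ is the primitive prime divisor of $r^{fd}-1$), yet from $p^a(r^{2\ell}-1)=(r^{\ell d}-1)(r^{\ell d}+1)$ one gets $u_2\mid r^{2\ell}-1$, impossible as $2\ell<\ell d$. Once $f$ is odd, every Frobenius cycle has odd length and evenness is immediate (the paper phrases this as citing \cite[Table~3C]{Basile}). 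For $d=2$ one has $r=2$ and the paper again cites \cite[Table~3C]{Basile}, noting that $f\neq 2$ because $n>5$. Your direct approach can be completed here too --- for $n=2^f+1>5$ a prime power one can check that the number of even-length Frobenius orbits on $\mathbb{P}^1(\mathbb{F}_{2^f})$ is even --- but you have not carried this out, and you missed the Zsigmondy shortcut that dissolves your ``secondary subtlety'' for $d>2$ altogether.
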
 

\begin{proof}
Note that $H\leqs {\rm P\Gamma L}_d(q)$, as if $d>2$ the graph automorphism of $H$ does not act on the set of points of the projective space ${\rm PG}_{d-1}(q)$. 
Also since $n$ is odd, if $d=2$ then $q=n-1$ is even and so $(d,q-1)=1$.
In fact, since $n=(q^d-1)/(q-1)$ is a power of a prime, if $d\geq 3$ then by \cite[Proposition 1]{DL}, $d$ is prime and $(d,q-1)=1$. Therefore under the assumptions of the lemma, $d$ is prime, $(d,q-1)=1 $ and ${\rm PGL}_d(q)={\rm PSL}_d(q)$. It remains to show that $H\leqs {\rm Alt}_n$.\\ 
Suppose first that $d=2$. Then $n=q+1$. As $n$ is odd, $r=2$. Also $f\neq 2$ as $n>5$.  By \cite[Table 3C]{Basile} $H\leqs {\rm Alt}_n$.\\
Suppose now that $d>2$.   
If $f$ is odd then by \cite[Table 3C]{Basile} $H \leqs {\rm Alt}_n$. Finally, we claim that $f$ is not even. Suppose otherwise and write $f=2\ell$ for some $\ell\geq 1$.
 Since $d\geq 3$ is prime and $(q^d-1)/(q-1)$ is a power of a prime, one easily checks that $(r,f,d)\not\in\{(2,2,3),(2,2,6),(2,4,3)\}$. By Zsigmondy's theorem there is a prime divisor $u_1$ of $r^{fd}-1$ not dividing $r^s-1$ for  every $1\leq s<fd$. Clearly $u_1=p$. Again, applying Zsigmondy's theorem, there is a prime divisor $u_2$ of $r^{\ell d}-1$  not dividing $r^s-1$ for every $1\leq s<\ell d$. Clearly $u_2\neq p$. 
Now $$p^a(r^{2\ell}-1)=r^{2\ell d}-1=(r^{\ell d}-1)(r^{\ell d}+1)$$ and so $u_2$ divides $r^{2\ell}-1$, a contradiction, as  $2\ell<\ell d$ (since $d\geq3$).
\end{proof}

\begin{lem}\label{l:pglinsp+1}
Let $S={\rm Sym}_{p+1}$ where $p$ is an odd prime number and let $g$ be a $p$-element of $S$. Then there is a unique transitive subgroup of $S$ isomorphic to ${\rm PGL}_2(p)$ containing $g$. 
\end{lem}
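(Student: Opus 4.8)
The plan is to count the $p$-elements inside a fixed copy of ${\rm PGL}_2(p)$ in $S = {\rm Sym}_{p+1}$ and compare with the total number of $p$-elements of $S$, showing that the copies of ${\rm PGL}_2(p)$ partition (rather than merely cover) the set of $p$-elements. First I would note that, up to conjugacy in $S$, there is a unique transitive action of ${\rm PGL}_2(p)$ on $p+1$ points, namely the natural action on the projective line ${\rm PG}_1(p)$, and that all subgroups of $S$ isomorphic to ${\rm PGL}_2(p)$ arising this way form a single conjugacy class; write $N$ for the number of such subgroups, so $N = |S| / |N_S(L)|$ for $L$ one of them. Since ${\rm PGL}_2(p)$ is maximal in ${\rm Sym}_{p+1}$ (for $p$ odd, $p+1 \geq 4$, excluding the sporadic coincidences which do not occur here), and its normalizer in $S$ is itself, we get $N = (p+1)!/|{\rm PGL}_2(p)| = (p+1)!/(p(p^2-1)) = (p-2)!$.

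Next I would count $p$-elements. A $p$-element $g$ of ${\rm Sym}_{p+1}$ has order dividing $p$, hence order exactly $p$ (if nontrivial), and its cycle type is necessarily a single $p$-cycle together with one fixed point; the number of such elements is $\binom{p+1}{p}\cdot (p-1)! = (p+1)!/p$. Wait — I should be careful whether the statement intends $g=1$ to be included; since $g=1$ lies in every subgroup the claim is vacuous there, so assume $g \neq 1$. Inside $L \cong {\rm PGL}_2(p)$, the $p$-elements are exactly the nontrivial unipotent elements, which form a single conjugacy class of size $p^2-1$ (there are $p+1$ Sylow $p$-subgroups, each of order $p$, pairwise intersecting trivially, giving $(p+1)(p-1) = p^2-1$ nontrivial $p$-elements). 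Now I compute $N \cdot (p^2-1) = (p-2)! \cdot (p^2-1) = (p-2)!(p-1)(p+1) = (p+1)!/p$, which is precisely the total number of $p$-elements of $S$. Therefore, since every $p$-element lies in \emph{at least one} copy of ${\rm PGL}_2(p)$ (this is the one genuine input: a $p$-cycle on $p+1$ points extends to a copy of ${\rm PGL}_2(p)$ acting on the projective line, because a single $p$-cycle sits inside ${\rm AGL}_1(p) \leq {\rm PGL}_2(p)$ via the affine action, fixing the point at infinity), a counting (inclusion–exclusion degenerating to equality) argument forces each $p$-element to lie in \emph{exactly one}, giving uniqueness.

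The main obstacle is making the final counting argument rigorous: I must show not just that the numbers match but that this forces the incidence to be uniform. The clean way is a double-count of the set $\{(g,L) : g \text{ a nontrivial } p\text{-element},\ L \cong {\rm PGL}_2(p)\ \text{transitive},\ g \in L\}$. Counting by $L$ gives $N(p^2-1) = (p+1)!/p$; counting by $g$, each $g$ lies in $c_g \geq 1$ such subgroups, so $\sum_g c_g = (p+1)!/p$, and since there are exactly $(p+1)!/p$ values of $g$ each with $c_g \geq 1$, every $c_g = 1$. I should double-check the two facts fed into this: that ${\rm PGL}_2(p)$ has a single class of $p$-elements of size $p^2-1$ (standard, from the structure of a Borel subgroup ${\rm AGL}_1(p)$ of order $p(p-1)$ with $p+1$ conjugates), and that $N_S({\rm PGL}_2(p)) = {\rm PGL}_2(p)$, which follows from maximality together with the fact that ${\rm PGL}_2(p)$ is self-normalizing in ${\rm Sym}_{p+1}$ for $p \geq 5$ (the small case $p=3$, where ${\rm PGL}_2(3) = {\rm Sym}_4 = S$, makes the statement trivial, and $p=2$ is excluded). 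A final remark: the existence half (every $p$-element is contained in some such $L$) can alternatively be quoted as the $d=2$ case of the setup in Lemma \ref{l:pslinan}.
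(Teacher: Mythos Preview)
Your proof is correct and takes a genuinely different route from the paper's. The paper argues structurally: after establishing existence, it identifies $N_S(\langle g\rangle)$, of order $p(p-1)$, with the unique Borel subgroup $P$ of a fixed $H\cong{\rm PGL}_2(p)$ containing $g$, and then shows that $P$ cannot lie in a second conjugate $H_0\neq H$ by a short normalizer chase (any $k$ with $H^k=H_0$ can be adjusted by an element of $H_0$ so as to lie in $N_S(P)=P\leqs H_0$, forcing $H=H_0$). Your argument instead double-counts pairs $(g,L)$: with $N=(p-2)!$ conjugates of $L$ and $p^2-1$ nontrivial $p$-elements in each, the product $N(p^2-1)=(p+1)!/p$ equals the number of $p$-elements of $S$ exactly, so the covering is a partition and $c_g=1$ for all $g$.

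Both proofs rest on the same two inputs (a single $S$-class of transitive copies of ${\rm PGL}_2(p)$, and existence of one through $g$ via conjugacy of $p$-cycles). Your counting is slicker once the arithmetic is in hand and needs nothing about the internal subgroup structure of $L$ beyond the Sylow count; the paper's argument is more local and would survive even in situations where a global count failed to close up. Two minor remarks: your justification of $N_S(L)=L$ via maximality is slightly roundabout---the cleaner route is $C_S(L)=1$ (since $L$ is primitive and non-regular) together with ${\rm Aut}({\rm PGL}_2(p))={\rm PGL}_2(p)$ for $p$ prime, giving $|N_S(L)|\leqs |{\rm Aut}(L)|=|L|$; and the closing pointer to Lemma~\ref{l:pslinan} is not quite apt, as that lemma concerns containment in ${\rm Alt}_n$ rather than existence of a copy through $g$.
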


\begin{proof}
The group ${\rm PSL}_2(p)$ acts 2-transitively and faithfully  on  the set of  points of the projective space  ${\rm PG}_1(p)$ of cardinality $p+1$. In particular ${\rm PSL}_2(p)$ is a transitive subgroup of ${\rm Sym}_{p+1}$. Moreover, ${\rm PGL}_2(p)$ is a subgroup of the normalizer of   ${\rm PSL}_2(p)$ in  ${\rm Sym}_{p+1}$.
By \cite[Proposition 3.9.2]{Basile}, there is a unique conjugacy class of transitive subgroups of $S$ isomorphic to ${\rm PSL}_2(p)$. In particular there is a single conjugacy class of transitive subgroups of $S$ isomorphic to ${\rm PGL}_2(p)$. Since ${\rm PGL}_2(p)$ contains a $p$-element and all $p$-elements are conjugate in ${\rm Sym}_{p+1}$, we deduce that there is a subgroup  $H$ of ${\rm Sym}_{p+1}$ isomorphic to ${\rm PGL}_2(p)$ containing $g$.  
From the cycle shape of $g$, we have $C_{{\rm Sym}_{p+1}}(g)=C_p$ and following  Lemma \ref{l:nscs}, $N_{{\rm Sym}_{p+1}}(\langle g\rangle)$ has order $p(p-1)$. It follows that $N_{{\rm Sym}_{p+1}}(\langle g\rangle)$ is the unique parabolic subgroup $P$ of $H$ containing $g$. We claim that $P$ is not a subgroup of any other subgroup $H_0$ of ${\rm Sym}_{p+1}$ conjugate to $H$. Suppose otherwise. Then there exists $k \in {\rm Sym}_{p+1}$ such that $H^k=H_0$, $H_0\neq H$, $P\leqs H$ and $P\leqs H_0$. Let $P_0=P^k$ and note that $P_0$ is a parabolic subgroup of $H_0$. Since all parabolic subgroups of $H_0$ are conjugate, there exists $a\in H_0$ such that $P_0=P^a$.  We obtain that $ka^{-1} \in N_{{\rm Sym}_{p+1}}(P)$. Since $P=N_{{\rm Sym}_{p+1}}(\langle g\rangle)$ and $\langle g\rangle$ is a $p$-Sylow subgroup of ${\rm Sym}_{p+1}$, we in fact have $N_{{\rm Sym}_{p+1}}(P)=P$. In particular, $ka^{-1}\in P$. Since $a\in H_0$, we obtain $k\in H_0$, and $H=H_0$, a contradiction.  It follows that $H$ is the unique transitive subgroup of ${\rm Sym}_{p+1}$ isomorphic to ${\rm PGL}_2(p)$  and containing $g$. 
\end{proof}

\begin{lem}\label{l:orderdegree}
Let $S={\rm Sym}_n$ where $n\geq 3$ is an integer.  
Let $p\leq n$ be a prime  number and $g$ be a $p$-element of $S$ having no two cycles of same length.
Then $|g|>n/2$. 
\end{lem}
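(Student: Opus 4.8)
The plan is to show that a $p$-element $g$ of $\mathrm{Sym}_n$ whose cycle lengths are pairwise distinct (hence all equal to distinct powers of $p$) has order larger than $n/2$. Write the cycle shape of $g$ as a set of distinct powers $\{p^{a_1},\dots,p^{a_r}\}$ with $a_1 > a_2 > \dots > a_r \geq 0$, so that $|g| = p^{a_1}$ and $n = \sum_{i=1}^r p^{a_i} + (\text{number of fixed points})$. The key point is that $n$ is bounded above by the sum of \emph{all} powers of $p$ up to $p^{a_1}$, since the $a_i$ are distinct nonnegative integers at most $a_1$: we get
\begin{equation*}
n \;\leq\; \sum_{j=0}^{a_1} p^j \;=\; \frac{p^{a_1+1}-1}{p-1}.
\end{equation*}

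So it suffices to check that $p^{a_1} > \tfrac12 \cdot \frac{p^{a_1+1}-1}{p-1}$, i.e. that $2p^{a_1}(p-1) > p^{a_1+1}-1$, i.e. $p^{a_1}(p-2) > -1$, which holds trivially for every prime $p \geq 2$ and every $a_1 \geq 0$. Hence $|g| = p^{a_1} > n/2$. One small thing to be careful about: I should note that since $p \leq n$ and $g$ is a nontrivial-in-spirit $p$-element with distinct cycle lengths, $g$ genuinely has at least one cycle of length a positive power of $p$ (the hypotheses force $a_1 \geq 1$ when $p \leq n$, since a $p$-element with all cycles trivial would be the identity and $n < p$ is excluded); but in fact the inequality above goes through even formally with $a_1 = 0$, so this is only a matter of presentation, not of substance.

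Alternatively, and perhaps more cleanly, I would argue directly: the second-largest cycle length is at most $p^{a_1 - 1} \leq p^{a_1}/p \leq p^{a_1}/3$, and similarly the full "tail" $n - p^{a_1} = \sum_{i \geq 2} p^{a_i} + (\text{fixed points})$ is a sum of distinct powers of $p$ strictly below $p^{a_1}$, hence at most $1 + p + \dots + p^{a_1 - 1} = \frac{p^{a_1}-1}{p-1} \leq p^{a_1} - 1 < p^{a_1}$. Therefore $n = p^{a_1} + (n - p^{a_1}) < 2p^{a_1} = 2|g|$, giving $|g| > n/2$ at once.

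There is essentially no obstacle here; the statement is a short arithmetic estimate and the only "content" is the observation that distinct cycle lengths in a $p$-element forces a geometric-series bound on $n$ in terms of the largest cycle length. I would present the second argument as the main line since it avoids introducing the closed form $\frac{p^{a_1+1}-1}{p-1}$ except as an intermediate bound, and makes the $2|g|$ comparison transparent.
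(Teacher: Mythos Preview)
Your proposal is correct, and your second argument is essentially identical to the paper's own proof: the paper writes $n \leq \sum_{i=0}^{a} p^i = p^a + \frac{p^a-1}{p-1} < 2p^a = 2|g|$, which is precisely your ``tail'' estimate. The only cosmetic wrinkle is that your first formulation counts fixed points separately from the cycle lengths $p^{a_i}$, whereas a fixed point is already a cycle of length $p^0$ (and there is at most one, by the distinct-lengths hypothesis); this does not affect the bound.
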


\begin{proof}
Say $g$ has order $|g|=p^a$ for some $a\in \mathbb{N}$. 
Clearly 
\begin{eqnarray*}
n & \leq& \sum_{i=0}^{a} p^i\\
& = & p^{a}+\sum_{i=0}^{a-1}p^i \\
& = & p^a+\frac{p^{a}-1}{p-1}\\
& < &   2p^a\\
& = & 2|g|. 
\end{eqnarray*}
\end{proof}

\begin{lem}\label{l:primitive}
Let $S={\rm Sym}_n$ where $n\geq 3$ is an integer.   Let $p\leq n$ be a prime number and $g$ be a $p$-element of $S$ having no two cycles of same length.  Suppose that $g$ belongs to a primitive subgroup $H$ of $S$. Then either $H$ is almost simple or $H$ is of affine type.  
\end{lem}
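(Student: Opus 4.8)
The plan is to invoke the O'Nan--Scott theorem, which classifies the primitive subgroups $H$ of $\mathrm{Sym}_n$ into five families: affine type, almost simple type, diagonal type, product type, and twisted wreath type. I would then argue that the presence of the element $g$ — a $p$-element whose nontrivial cycles all have pairwise distinct lengths, one of them a $p$-power cycle moving more than half the points by Lemma \ref{l:orderdegree} — forces $H$ to be affine or almost simple. The main leverage is the arithmetic constraint on $n = \deg H$ imposed by the other three types: in each of them $n$ is a proper power (or a product forcing such structure), and moreover $H$ cannot contain an element with so large a cycle.

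First I would dispose of diagonal type: there $n = |T|^{k-1}$ for a nonabelian simple group $T$ and $k \geq 2$, and the point stabiliser contains $T$ acting in a way that bounds the orders of elements; since $|T|$ is divisible by at least two primes, $n$ is not a prime power unless forced, and in any case a primitive group of diagonal type has no element fixing a point and moving the rest in a single cycle of length $>n/2$ — one can check the element orders in $T^k \rtimes (\mathrm{Out}(T) \times \mathrm{Sym}_k)$ are too small, since an element of $\mathrm{Sym}_n$ that is a single cycle on more than half its support has order $>n/2$ by Lemma \ref{l:orderdegree}, which exceeds the exponent bound $|T|^{O(1)}$ only in easily excluded small cases — actually the cleanest route is: in diagonal, product, and twisted wreath actions the degree $n$ is either a proper power $m^k$ with $k \geq 2$ (product and twisted wreath type, where $n = m^k$ with $m = \deg$ of a component) or $|T|^{k-1}$ (diagonal), and one shows such an $n$ cannot be written as a sum of distinct $p$-powers with top term a $p$-power $>n/2$, OR the group structure prevents an element with a cycle of length $>n/2$. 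For product type $H \leq \mathrm{Sym}_m \wr \mathrm{Sym}_k$ in product action with $n = m^k$, $k \geq 2$: any element has cycle lengths dividing $\mathrm{lcm}$ of bounded pieces, and more to the point $n = m^k$ with $k\ge 2$ means $n$ is not of the form handled, but the sharp tool is again Lemma \ref{l:orderdegree} combined with the fact that the longest cycle of an element of $\mathrm{Sym}_m \wr \mathrm{Sym}_k$ in product action has length at most $m^{\lceil k/2\rceil}\cdot(\text{something})$ — rather than grind this, I would cite that a primitive group containing a cycle of length $\ell$ with $n - \ell \geq 2$ (here $\ell = p^{a_1} > n/2$, so $n - \ell < n/2$; note $g$ need not be a single cycle, but its restriction to the $p^{a_1}$-cycle's support behaves like one) is very restricted: by a classical result (Jordan, Marggraf; see also the classification of primitive groups containing elements of small support) such groups are known and are alternating, symmetric, affine, or almost simple of explicitly bounded type.

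Concretely, the step I expect to do carefully is: $g$ contains a cycle $\sigma_1$ of length $p^{a_1} > n/2$, hence $g^{p^{a_1-1}}$ is a $p$-cycle (on the support of $\sigma_1$) lying in $H$; so $H$ is a primitive group containing a $p$-cycle with $p \leq n$ and $p > n/2$, i.e. $p \in \{n, n-1, n-2, \dots\}$ with $n - p < n/2$. Primitive groups containing a cycle of prime length $p$ with $p \leq n-3$ are, by a theorem of Jordan (and its refinements), only $\mathrm{Alt}_n$, $\mathrm{Sym}_n$; if $p \in \{n-2, n-1, n\}$ the possibilities include in addition subgroups of $\mathrm{AGL}_d(r)$ and of $\mathrm{P\Gamma L}_d(q)$ on projective points — all of which are affine or almost simple. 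Either way, diagonal, product, and twisted wreath types are excluded because none of them contains a $p$-cycle for $p$ this large (their socles are products $T^k$ with $k \geq 2$ and point stabilisers are large, forcing every $p$-element to have at least two cycles of the relevant length, or the socle acts without fixed points in a way incompatible with a $p$-cycle). I would spell out the exclusion via: in these three types $\mathrm{soc}(H) = T^k$ with $k \geq 2$, and any $p$-element of $H$ projecting nontrivially to the top $\mathrm{Sym}_k$ cannot be a $p$-cycle (its support is too large and splits), while a $p$-element inside $T^k$ has its cycles permuted in $k$-tuples, again precluding a single long cycle of the required length when $k\ge 2$ and $n=|T|^{k-1}$ or $n=m^k$. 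The hard part will be handling product type cleanly without a long case analysis; I would lean on Lemma \ref{l:orderdegree} (so $|g| > n/2$) together with the standard bound that an element of $\mathrm{Sym}_m \wr \mathrm{Sym}_k$ acting in product action on $m^k$ points, having a cycle of length $L$, forces $L \le m^k / m^{k-1} = m$ times a factor bounded by cycle lengths in the top group — making $L$ much smaller than $n/2 = m^k/2$ once $k \geq 2$ and $m \geq 2$, the sole borderline cases $m^k$ small being ruled out by $n > 5$ and $n$ an odd prime power.
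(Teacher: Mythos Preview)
Your overall strategy---invoke O'Nan--Scott and use the order bound $|g|>n/2$ from Lemma~\ref{l:orderdegree} to exclude the non-affine, non-almost-simple types---is exactly the paper's approach. But the concrete step you single out as the crux is wrong. You write that $g^{p^{a_1-1}}$ is a $p$-cycle on the support of $\sigma_1$, and then plan to apply Jordan's theorem on primitive groups containing a $p$-cycle. This fails: if $\sigma_1$ is a $p^{a_1}$-cycle then $\sigma_1^{p^{a_1-1}}$ is a product of $p^{a_1-1}$ disjoint $p$-cycles, not a single $p$-cycle. (The other $\sigma_i$ do die, since $a_i\le a_1-1$, so $g^{p^{a_1-1}}=\sigma_1^{p^{a_1-1}}$; but that element has many cycles of equal length $p$, and Jordan's theorem gives you nothing.) The Jordan route therefore only works when $a_1=1$, i.e.\ $n\in\{p,p+1\}$, which is far from the general case.

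The paper instead stays with the element-order argument throughout and cites \cite{GMPS} for the needed bounds: for types SD, HS, HC, CD, TW one has that every element has order at most $n/4$ (with one stray exception at $n=60$, order $15$), immediately contradicting $|g|>n/2$. For PA type (your ``product type'') the paper does the short direct computation you gesture at but do not carry out: writing $g=(x_1,\dots,x_\ell)\sigma$ in $X\wr\mathrm{Sym}_\ell$ with $n=a^\ell$, one has $\sigma^k=1$ for some $k\le\ell$, then $g^k\in X^\ell\le(\mathrm{Sym}_a)^\ell$ has order at most $a$, so $|g|\le a\ell$; combined with $|g|>a^\ell/2$ this gives $2\ell> a^{\ell-1}$, forcing $\ell=1$ since $a\ge 5$. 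Your sketch of the PA case speaks of bounding \emph{cycle lengths} in the product action rather than the \emph{order} of $g$; the order bound is both what is needed and what is easy to obtain. Drop the Jordan detour and execute the order bounds directly.
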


\begin{proof}
By Lemma \ref{l:orderdegree}, $|g|>n/2$. By \cite[Theorem 1]{Praeger}, the finite primitive permutation groups split into eight families: AS, HA, SD, HS, HC, CD, TW and PA. To establish  that $H$ is almost simple or of affine type amounts to showing that $H$ is of AS or HA type.   
By \cite[Theorem 5.9]{GMPS} if a primitive group of degree $n$ of type SD contains an element $h$ of order $|h|\geq n/4$ then $n=60$ and $|h|=15$.  As $|g|>n/2$, it follows that the group $H$ is not of type SD. Also as a primitive group of type HS is contained in a primitive group of type SD, $H$ cannot be of type HS. 
If $H$ is of type HC, CD, TW then by \cite[\S5.3]{GMPS} $H$ does not contain any element of order greater than $n/4$,  contradicting $|g|>n/2$. \\
 We claim finally that if $H$ is of type  PA then $H$  is in fact almost simple. Suppose indeed that $H$ is of type PA.   Then ${\rm soc}(H)=T^\ell$ for some non abelian finite simple group $T$ and for some $\ell\in \mathbb{N}$. Moreoever, $H \leqs X {\rm wr} \  {\rm Sym}_\ell$ where ${\rm soc}(X)=T$ and $X\leqs {\rm Sym}_a$ for some $a\in \mathbb{N}$ such that $n=a^\ell$. 
 We show that $\ell=1$, establishing that  $H$ is indeed  almost simple. Since $g\in H\leqs X {\rm wr}\ {\rm Sym}_\ell$, there exist $x_1,\dots,x_\ell $ in $X$ and $\sigma$ in ${\rm Sym}_{\ell}$ such that $g=(x_1,\dots,x_\ell)\sigma$. As $g$ is a $p$-element, $\sigma$ is a $p$-element of ${\rm Sym}_\ell$ of order at most $\ell$. In particular, there exists $k\in \mathbb{N}$ with  $k\leq \ell$  such that $g^k\in X^\ell$. Now $g^k$ is a $p$-element of $({\rm Sym}_a)^\ell$ and has order at most $a$.  In particular, there exists $j\in \mathbb{N}$ with  $j\leq a$  such that $g^{kj}=1$. It follows that $|g|\leq kj\leq a\ell$. Since $|g|>n/2=a^\ell/2$, we deduce that $a\ell>a^\ell/2$. That is $2\ell \geq a^{\ell-1}$.  Since  $X$ is almost simple, $a\geq 5$ and so $\ell=1$. In particular $H$ is almost simple. The result follows. 
 \end{proof}

\begin{lem}\label{l:altactset}
Let $S={\rm Sym}_n$ where $n\geq 3$ is an integer.   Let $p\leq n$ be a prime number and $g$ be a $p$-element of $S$ having no two cycles of same length.  Suppose that $g$ belongs to an almost simple primitive subgroup $H$ of $S$ with ${\rm soc}(H)={\rm Alt}_m$  for some $m\in \mathbb{N}$. If $1<k<m-1$ then the action of $H$ is not on the set of $k$-subsets from $\{1,\dots,m\}$. 
\end{lem}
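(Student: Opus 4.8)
The plan is to exploit the hypothesis that $g$ is a $p$-element with no two cycles of equal length, which by Lemma \ref{l:orderdegree} forces $|g| > n/2$, together with the fact that $n = \binom{m}{k}$ when $H$ acts on $k$-subsets. So first I would suppose for a contradiction that the action of $H$ is on the set of $k$-subsets of $\{1,\dots,m\}$ with $1 < k < m-1$; by replacing $k$ with $m-k$ if necessary (the two actions being permutation isomorphic), I may assume $2 \le k \le m/2$. Then $n = \binom{m}{k}$, and since $g \in H \le {\rm Sym}_m$ acting on $k$-subsets, $g$ is induced by some permutation $\bar g \in {\rm Sym}_m$; as $g$ is a $p$-element, so is $\bar g$, and $|g|$ divides $|\bar g|$, which is a power of $p$ at most $\ldots$ — more precisely $|\bar g| \le m$ since $\bar g$ lies in ${\rm Sym}_m$ and is a $p$-element (its order is the lcm of its cycle lengths, each $\le m$). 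Actually $|g| = |\bar g|$ here since $k \ge 1$, but all I need is $|g| \le m$.

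The core of the argument is then the numerical inequality: $|g| > n/2 = \frac12\binom{m}{k}$ combined with $|g| \le m$ gives $\binom{m}{k} < 2m$. I would then check that for $2 \le k \le m/2$ this forces a very small range of $(m,k)$. Indeed $\binom{m}{2} = m(m-1)/2 < 2m$ already fails as soon as $m \ge 6$ (it gives $m-1 < 4$, i.e. $m \le 4$), and for $k \ge 3$ the binomial $\binom{m}{k}$ is even larger, so only finitely many pairs survive, essentially $m \le 5$. The remaining obstacle is to dispose of these small leftover cases directly: for each surviving $(m,k)$ one has $n = \binom{m}{k}$ small (with $2 \le k \le m-2$ forcing $m \ge 4$, so $n \ge 6$), and one checks by hand that no $p$-element of ${\rm Sym}_n$ with distinct cycle lengths sits inside ${\rm Alt}_m$ acting on $k$-sets — for instance using that $|g|$ would have to be a prime power exceeding $n/2$ while simultaneously dividing some power of a permutation in ${\rm Sym}_m$ with $m < n$, which is quickly contradictory since $|g| \le m < n < 2|g|$ leaves no room.

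I expect the main (though modest) obstacle to be bookkeeping the boundary cases cleanly: one must be careful that $k$ and $m-k$ give the same permutation group, that $n = \binom{m}{k} \ge 6$ so Lemma \ref{l:orderdegree} applies, and that the almost simple hypothesis ${\rm soc}(H) = {\rm Alt}_m$ with the $k$-subset action genuinely requires $m \ge 5$ (so that ${\rm Alt}_m$ is simple and the action primitive), which trims the candidate list further. Once the inequality $\binom{m}{k} < 2m$ is established the rest is a finite check, so the write-up is short: state the contradiction hypothesis, reduce to $2 \le k \le m/2$, derive $|g| \le m$ and hence $\binom{m}{k} < 2m$, observe this is impossible for all admissible $(m,k)$, and conclude.
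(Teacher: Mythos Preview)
Your approach is correct and essentially identical to the paper's: both use Lemma \ref{l:orderdegree} to get $|g|>n/2$, bound $|g|\le m$ since $g$ comes from a $p$-element of ${\rm Sym}_m$, and combine these with $n=\binom{m}{k}$ to reach a contradiction. The paper is slightly more direct in that it simply asserts $\binom{m}{k}\ge 2m$ for $1<k<m-1$ and $m\ge 5$ (so $|g|\le m\le n/2$), whereas you derive $\binom{m}{k}<2m$ and then worry about leftover small cases; since the almost simple hypothesis forces $m\ge 5$, your inequality $\binom{m}{2}<2m\Rightarrow m\le 4$ already eliminates everything and no finite check is needed.
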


\begin{proof}
Note that $m\geq 5$. Suppose that $H$ acts  on the set of $k$-subsets from $\{1,\dots,m\}$. Then $n=\binom{m}{k}$. The assumption on $k$ gives that $\binom{m}{k}\geq 2m$, that is $n\geq 2m$. Now $g\in {\rm Alt}_m$ being a $p$-element has order at most $m$ and so $|g|\leq n/2$ contradicting  Lemma \ref{l:orderdegree}.  
\end{proof}

\begin{lem}\label{l:pslactproj} 
Let $S={\rm Sym}_n$ where $n\geq 3$ is an integer.   Let $p\leq n$ be an odd prime number congruent to $3$ modulo $4$ and let $g$ be a $p$-element of $S$ having no two cycles of same length.  Suppose that $g$ belongs to an almost simple primitive subgroup $H$ of $S$ with ${\rm soc}(H)={\rm PSL}_d(q)$  for some $2\leq d\in \mathbb{N}$ and some prime power $q=r^f$. 
If the action of $H$ is on the set of  points of the projective  space ${\rm PG}_{d-1}(q)$ then one of the following holds:
\begin{enumerate}[(i)]
\item $d=2$, $q=r=p$, $n=p+1$ and $|g|=p$.  
\item $|g|=n=(q^d-1)/(q-1)$ and $r\neq p$. 
\end{enumerate}
\end{lem}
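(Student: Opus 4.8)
\noindent\emph{Proof proposal.} Since $H$ acts on the points of ${\rm PG}_{d-1}(q)$ we have $n=(q^{d}-1)/(q-1)$, and Lemma~\ref{l:orderdegree} gives $|g|>n/2$; write $|g|=p^{a}$. The plan is to reduce to $g\in{\rm PGL}_d(q)$ and then split according to whether $p$ equals the defining characteristic $r$. For the reduction: a graph automorphism of ${\rm PSL}_d(q)$ interchanges points and hyperplanes, so it cannot act on our point set and $H\leq{\rm P\Gamma L}_d(q)$; and if the image of $g$ in ${\rm P\Gamma L}_d(q)/{\rm PGL}_d(q)\cong{\rm Gal}(\F_q/\F_r)$ were nontrivial, of order $p^{c}\mid f$, then $g^{p^{c}}\in{\rm PGL}_d(q)$ and a crude bound on $|g|$ in terms of $p^{c}$, $q$ and $d$ would contradict $|g|>n/2$. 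So I may lift $g$ to $x\in{\rm GL}_d(q)$.

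If $r=p$, I would argue as follows. By the Jordan decomposition the semisimple part of $x$ is a scalar, so $g$ is the image of a unipotent element and hence $|g|\leq p^{\lceil\log_{p}d\rceil}$. As $q$ is a power of $p\geq3$, for $d\geq3$ this falls below $q^{d-1}/2\leq n/2$ (the one borderline case $p=q=3$, $d=3$, with $n=13$ and $|g|\leq3$, being checked directly), a contradiction. So $d=2$, whence $|g|=p$ and $n=q+1$; then $p>(q+1)/2$ forces $q<2p$, so $q=p$, which is conclusion~(i).

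If $r\neq p$, I would take $x$ semisimple, say $V=\F_{q}^{d}=\bigoplus_{i}V_{i}$ with $x|_{V_{i}}$ irreducible of degree $d_{i}$. First: if $x$ has two distinct eigenvalues in $\F_q$, or one of multiplicity $\geq2$, then $g$ has two fixed points, i.e.\ two $1$-cycles, which is excluded; so at most one $d_{i}$ equals $1$. Next, for a block with $d_{i}\geq2$ the element $g$ acts on the invariant set $\mathbb{P}(V_{i})$ of $(q^{d_{i}}-1)/(q-1)>1$ points with all cycles of one length, hence (no repetitions) as a single cycle; so $(q^{d_{i}}-1)/(q-1)$ divides $|g|=p^{a}$ and is a power of $p$, and Zsigmondy's theorem then forces the primitive prime divisor of $q^{d_{i}}-1$ to be $p$, so ${\rm ord}_{p}(q)=d_{i}$. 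Hence two blocks of dimension $\geq2$ would either coincide in dimension (two equal cycle lengths) or give two values of ${\rm ord}_{p}(q)$ — both impossible — so there is exactly one block $V_{1}$ with $d_{1}\geq2$, of dimension $d$ or $d-1$. I would then rule out $\dim V_{1}=d-1$ (so $d\geq3$): there $(q^{d-1}-1)/(q-1)$ is a power of $p$ and the $1$-dimensional complement adds only a factor $t\mid q-1$ to $|g|$, but ${\rm ord}_{p}(q)=d-1\geq2$ forces $p\nmid q-1$, hence $t=1$ and $|g|=(q^{d-1}-1)/(q-1)<n/2$, contradicting Lemma~\ref{l:orderdegree} (the Zsigmondy exceptions, and $q=2$, handled by hand). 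Therefore $x$ is irreducible, so it lies in a Singer cyclic subgroup of ${\rm GL}_d(q)$ whose image in ${\rm PGL}_d(q)$ is cyclic of order $n$ and acts regularly on the $n$ projective points; then $g$ has all cycles of the common length $|g|$, so by the hypothesis $g$ is a single $n$-cycle and $|g|=n$, and $r\neq p$ by assumption. This is conclusion~(ii).

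The main obstacle is the reducible subcase of $r\neq p$: the crude bound $|g|\leq q^{d-1}-1$ from a proper invariant subspace does not on its own fall below $n/2$, so one must use the exact cycle lengths $(q^{d_{i}}-1)/(q-1)$ on the projective subspaces, pin down ${\rm ord}_{p}(q)$ by Zsigmondy's theorem, and then play this against $|g|>n/2$ (Lemma~\ref{l:orderdegree}) and the parity of $p$; the Zsigmondy exceptions and a handful of small $q,d$ must be dispatched separately, and the bookkeeping of scalars in passing between ${\rm GL}_d(q)$, ${\rm PGL}_d(q)$ and ${\rm P\Gamma L}_d(q)$ is a minor additional chore.
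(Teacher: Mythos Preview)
Your proposal is correct and follows the same overall strategy as the paper's proof: first reduce to $g\in{\rm PGL}_d(q)$, then split according to whether $p$ equals the defining characteristic $r$, in each case playing the bound $|g|>n/2$ from Lemma~\ref{l:orderdegree} against an upper bound for $|g|$.

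The execution differs in a genuine way. The paper begins by invoking Lemma~\ref{l:pslinan} to force $d$ to be prime, and then imports its key estimates from \cite{GMPS}: the reduction to ${\rm PGL}_d(q)$ uses \cite[Theorem~2.16]{GMPS} to place $g^{h}$ inside ${\rm PGL}_d(q^{1/h})$; the unipotent case quotes \cite[Proposition~2.6]{GMPS}; and the semisimple case invokes the proof of \cite[Corollary~2.7]{GMPS} to obtain the dichotomy that $|g|$ divides $(q^d-1)/(q-1)$ or $q^{d-1}-1$, eliminating the second option via Zsigmondy and a fixed-point count. Your argument is more self-contained: you never need $d$ prime, and in the semisimple case you work directly with the decomposition $V=\bigoplus V_i$ into $x$-irreducible summands, read off the cycle lengths on each $\mathbb{P}(V_i)$, and use the no-repeated-cycle hypothesis together with ${\rm ord}_p(q)$ to force a single irreducible block. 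The trade-off is that your reduction to ${\rm PGL}_d(q)$ is only sketched (the precise inequality $2h\,(r^{fd/h}-1)/(r^{f/h}-1)\leq n$ for $h\geq 3$ that the paper writes down, or an equivalent, should be supplied), and the Zsigmondy exceptions you flag in the reducible subcase need to be written out; conversely the paper's argument leans on several external citations that yours replaces by explicit linear algebra.
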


\begin{proof}
Note that from the existence of the subgroup $H$ of $G$, we must have $n\geq 5$ and by Lemma \ref{l:pslinan} $d$ is prime. 
Say $|g|=p^a$ for some $a\in \mathbb{N}$. By assumption, $$n=(q^d-1)/(q-1)=(r^{fd}-1)/(r^f-1).$$ Also by Lemma \ref{l:orderdegree}, $|g|>n/2$.  Since $g$ has odd order and a graph automorphism of ${\rm PSL}_d(q)$ is an involution,   $g \in {\rm PGL}_d(q)\rtimes \langle \psi \rangle$ where $\psi$ is a generator of the group of field automorphisms  and has order $f$. In particular there exist $g_1 {\in \rm PGL}_d(q)$ and $\psi_1 \in \langle \psi \rangle$ such that $g=(g_1,\psi_1)$. Set $h=|\psi_1|$. Note that $h$ divides $f$. Since $|g|=p^a$, $h=p^b$ for some nonnegative integer $b\leq a$.     We first show that $g\in {\rm PGL}_d(q)$.  Suppose not. Then $h>1$ and so $b\geq 1$. 
By the proof of \cite[Theorem 2.16]{GMPS}, $g_2=g^h$  belongs to  ${\rm PGL}_d(q^{1/h})$. In particular, $g$ has order dividing $h|g_2|$. Moreover, by  \cite[Corollary 2.7]{GMPS},
$$|g_2|\leq \frac{r^{\frac{f}{h}d}-1}{r^{\frac{f}{h}}-1}.$$
Since $h\geq 3$, one can check that 
$$ 2h\frac{r^{\frac{f}{h}d}-1}{r^{\frac{f}{h}}-1}\leq \frac{r^{fd}-1}{r^{f}-1}$$ unless $d=2$, $r=2$, and $h=f=3$ or $h=f=4$.  The latter does not hold as $p$ is odd. It follows that $|g|<n/2$ except possibly if $d=2$, $r=2$ and $h=f=3$. Suppose $d=2$, $r=2$ and $h=f=3$. Then ${\rm soc}(H)={\rm PSL}_2(2^3)$, $n=9$ and $|g|$ has order dividing 9. However ${\rm PGL}_2(2^3)\rtimes \langle \psi\rangle$ has no element of order $9$ and so $|g|=3$. In particular $|g|<n/2$ in all cases, a contradiction. Hence $g\in {\rm PGL}_d(q)$ as claimed.\\
Suppose first that $r=p$.  Then $g$ is a unipotent element of ${\rm PGL}_n(q)$. Assume that $d=2$. Then $|g|=p$ and $n=q+1$. In particular, if $f>1$ then $|g|\leq n/2$, a contradiction.  So $f=1$ and $q=p$.  \\
Assume that $d=3$. Then $|g|=p$ and $n=q^2+q+1$. In particular, $|g|\leq n/2$, a contradiction. 
We can therefore assume that $d\geq 5$. By \cite[Proposition 2.6]{GMPS}, $|g|\leq p^{\lceil\frac{{\rm ln}(d)}{{\rm ln}(p)}\rceil}$. Since $p\geq 3$, $|g|< p^{{\rm ln}(d)+1}$.  But $p^{{\rm ln}(d)+1}\leq  \frac{q^d-1}{2(q-1)}$ for $p\geq 3$ and $d\geq 4$, and so $|g|\leq n/2$, a contradiction.\\
Suppose now that $r\neq p$ so that $g$ is a semisimple element of ${\rm PGL}_n(q)$. Since $g$ has no two cycles of same length, the proof of \cite[Corollary 2.7]{GMPS} yields that $|g|$  divides $n=(q^d-1)/(q-1)$ or $|g|$ divides $q^{d-1}-1$.  \\
We first show that $|g|$ does not divide $q^{d-1}-1$. Suppose otherwise.  Note that if $|g|$ is not equal to $q^{d-1}-1$ then $|g|\leq n/2$, a contradiction. Hence $|g|=q^{d-1}-1$. Since $|g|$ is 
odd,  we must have $r=2$. Moreover, as $|g|$ is a power of an odd prime, applying Zsigmondy's theorem, we deduce that $d=2$. So $n=q+1$ and $|g|=q-1$. Hence $g$ must have two fixed points, a contradiction. 
Therefore $|g|$ does not divide $q^{d-1}-1$, as claimed. \\
Suppose that $|g|$ divides $n$. Note that if $|g|\neq n$ then $|g|\leq n/2$, a contradiction. Hence $|g|=n$.  The result follows. 
\end{proof}

\begin{lem}\label{l:small}
Let $S={\rm Sym}_n$ where $n\geq 3$ is a positive integer.   Let $p\leq n$ be an odd prime number and $g$ be a $p$-element of $S$ having no two cycles of same length.  Suppose that $g$ belongs to an almost simple primitive subgroup $H$ of $S$. 
Assume that if ${\rm soc}(H)={\rm PSL}_d(q)$ then the action of $H$ is not  on the set of  points of the projective  space ${\rm PG}_{d-1}(q)$.  Moreover assume that ${\rm soc}(H)\neq{\rm Alt}_n$.  Then one of the following holds:
\begin{enumerate}[(i)]
\item  ${\rm soc}(H)=M_{11}$, $|g|=11$ and $n\in\{11,12\}$.
\item ${\rm soc}(H)=M_{12}$, $|g|=11$ and $n=12$.
\item  ${\rm soc}(H)=M_{23}$, $|g|=23$ and $n=23$.
\item  ${\rm soc}(H)=M_{24}$, $|g|=23$ and $n=24$.
\item ${\rm soc}(H)={\rm PSL}_2(7)$, $|g|=7$ and $n=7$. 
\item ${\rm soc}(H)={\rm PSL}_2(11)$, $|g|=11$ and $n=11$. 
\end{enumerate}
Moreover $H\leqs {\rm Alt}_n$.
\end{lem}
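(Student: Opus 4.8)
\emph{Strategy.} I would run through the possibilities for $T:={\rm soc}(H)$ provided by the classification of finite simple groups. The starting input is Lemma~\ref{l:orderdegree}, which gives $|g|>n/2$; writing $|g|=p^{a}$, we also have $|g|\leqs n$, because a $p$-element of ${\rm Sym}_n$ has all its cycle lengths equal to powers of $p$ and hence order at most $n$. Thus $H$ is an almost simple primitive group of degree $n$ containing an element of prime-power order in $(n/2,\,n]$, and in addition $T\neq{\rm Alt}_n$ and, when $T={\rm PSL}_d(q)$, the action is not on the points of ${\rm PG}_{d-1}(q)$. The aim is to show these constraints force $(T,H,n,|g|)$ into the list (i)--(vi); writing $e(H)$ for the largest order of an element of $H$, the crude inequality driving every case is $n<2\,e(H)$.

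\emph{Alternating socle.} Suppose $T={\rm Alt}_m$. As an element of ${\rm Alt}_m$ the $p$-element $g$ has order at most $m$, so $n<2m$, and a point stabiliser of $H$ has index less than twice the natural degree of $T$. By the description of the core-free subgroups of small index in ${\rm Alt}_m$ and ${\rm Sym}_m$ --- using the exceptional isomorphisms ${\rm Alt}_5\cong{\rm PSL}_2(5)$, ${\rm Alt}_6\cong{\rm PSL}_2(9)$, ${\rm Alt}_8\cong{\rm PSL}_4(2)$ to defer the cases $m\in\{5,6,8\}$ to the linear case below --- the only such action is the natural one on $m=n$ points, giving $T={\rm Alt}_n$, which is excluded. (Equivalently, for a $k$-subset action: $k\in\{1,m-1\}$ forces $n=m$, and $1<k<m-1$ contradicts Lemma~\ref{l:altactset}.)

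\emph{Lie type socle.} Here the engine is the collection of upper bounds on $e(H)$ for almost simple groups of Lie type from \cite{GMPS} --- the ones already invoked in the proof of Lemma~\ref{l:pslactproj} --- together with the standard lower bounds on the minimal degree of a faithful primitive permutation representation of $T$, i.e.\ on the minimal index of a maximal subgroup. For all but finitely many pairs $(T,q)$, and for every primitive action other than the one of smallest degree, the inequality $n<2\,e(H)$ is incompatible with $n$ being at least the minimal degree. Running through the classical families (linear, unitary, symplectic, orthogonal) and the exceptional ones, and using that for $T={\rm PSL}_d(q)$ the smallest-degree action --- on projective points --- is excluded by hypothesis, so that the relevant lower bound for $n$ is the next primitive degree (on lines, or on non-degenerate or totally singular subspaces, and so on), the only surviving configurations are ${\rm PSL}_2(7)$ on $7$ points and ${\rm PSL}_2(11)$ on $11$ points --- in both the action is not on the projective line, so the hypothesis is respected --- giving (v) and (vi). In each, $g$ is a single $p$-cycle of order $p\in\{7,11\}$, so all the hypotheses on $g$ hold.

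\emph{Sporadic socle, and the final claim.} For $T$ sporadic I would compare, group by group, the element orders of $H$ with the degrees of its primitive permutation representations: the only pairs $(T,n)$ admitting an element of prime-power order exceeding $n/2$ whose cycle shape on $n$ points has pairwise distinct cycle lengths are $(M_{11},11)$, $(M_{11},12)$, $(M_{12},12)$, $(M_{23},23)$ and $(M_{24},24)$ with $|g|\in\{11,23\}$, yielding (i)--(iv); note $M_{22}$ does not occur, since it has no element of prime-power order greater than $11=22/2$. For the last assertion, in each of (i)--(vi) the outer automorphism group of $T$ does not act on the $n$ points at issue --- for instance ${\rm Aut}(M_{11})=M_{11}$, the outer automorphism of $M_{12}$ interchanges its two $12$-point representations, and ${\rm PGL}_2(7)$ has no subgroup of index $7$ while ${\rm PGL}_2(11)$ has none of index $11$ --- so $H=T$ is simple; a non-abelian simple subgroup of ${\rm Sym}_n$ meets ${\rm Alt}_n$ in a non-trivial normal subgroup, hence lies in ${\rm Alt}_n$, giving $H\leqs{\rm Alt}_n$. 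I expect the main obstacle to be the Lie-type analysis: balancing the \cite{GMPS} element-order bounds against minimal permutation degrees across all the classical (and exceptional) families, while carefully handling the low-rank exceptional isomorphisms and the small cases --- which is exactly where the apparently anomalous examples ${\rm PSL}_2(7)$ and ${\rm PSL}_2(11)$ emerge.
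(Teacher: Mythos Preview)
Your approach is correct and follows the same overall strategy as the paper: exploit the bound $|g|>n/2$ from Lemma~\ref{l:orderdegree} together with the classification results of \cite{GMPS} to pin down the possible socles, then verify $H=T\leqs{\rm Alt}_n$ case by case. The paper's execution is more compressed: rather than rederiving the comparison between element-order bounds and minimal permutation degrees family by family, it simply invokes \cite[Theorem~1.3]{GMPS} together with \cite[Table~6]{GMPS}, which already tabulate the almost simple primitive groups of degree $n$ containing an element of order at least $n/4$ (along with the relevant degrees and element orders). This immediately reduces the alternating case to $m\in\{5,6,7,8,9\}$, the classical case to ${\rm PSL}_2(q)$ with $q\in\{7,8,11,19\}$, and the remaining case to the Mathieu groups $M_r$ with $r\in\{11,12,22,23,24\}$; the excluded projective-point hypothesis then kills the $n=q+1$ cases, leaving exactly (v) and (vi). Your route---redoing the balance of $e(H)$ against minimal degrees, and handling ${\rm Alt}_5,{\rm Alt}_6,{\rm Alt}_8$ via the exceptional isomorphisms---would work but is longer; the paper's citation of the GMPS table is what you should expect to replace your ``engine'' paragraph. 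Your treatment of the final assertion $H\leqs{\rm Alt}_n$ matches the paper's exactly.
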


\begin{proof}
Suppose first that ${\rm soc}(H)={\rm Alt}_m$ for some $m\in \mathbb{N}$.  Since $m\neq n$ we can assume from Lemma \ref{l:altactset} and  \cite[Theorem 1.3]{GMPS} that ${\rm soc}(H)={\rm Alt}_m$ where $m \in \{5,6,7,8,9\}$. From the possible cycle shapes of $g$ and using \cite[Table 6]{GMPS} which gives the possible degrees of the corresponding action of $H$, we deduce that $m=n$, a contradiction. \\
Suppose now that ${\rm soc}(H)$ is of classical type. From the possible cycle shapes of $g$ and using \cite[Table 6]{GMPS} which gives the possible degrees of the corresponding action of $H$ we deduce that ${\rm soc}(H)={\rm PSL}_2(q)$ where $q\in \{7,8,11,19\}$. Moreover, if $q=7$ then $|g|=7$ and  $n\in \{7,8\}$, if $q=8$ then $|g|=9$ and $n=9$ , if $q=11$ then $|g|=11$ and $n\in \{11,12\}$, and if $q=19$ then $|g|=19$ and $n=20$. Note that if $n=q+1$ then the action of $H$ is on the set of  points of the projective space ${\rm PG}_{d-1}(q)$.\\ 
If ${\rm soc}(H)$ is not an alternating group nor a classical group then by \cite[Theorem 1.3]{GMPS}, ${\rm soc}(H)=M_{r}$ where $r\in \{11,12,22,23,24\}$. Considering the possible cycle shapes of $g$ and \cite[Table 6]{GMPS} which gives the possible degrees of the corresponding action of $H$ we deduce that one of the cases (i)-(iv) in the statement of the lemma holds. \\
We finally check that $H \leqs {\rm Alt}_n$. Clearly ${\rm soc}(H) \leqs {\rm Alt}_n$.  We show that if $H$ is as in (i)-(vi) then $H={\rm soc}(H)$ and so $H\leqs {\rm Alt}_n$. \\
As ${\rm Out}(H)$ is trivial for $H\in\{M_{11}, M_{23}, M_{24}\}$, if $H$ is as in (i), (iii) or (iv) then $H={\rm soc}(H)$.  \\
Suppose that ${\rm soc}(H)={\rm PSL}_2(q)$ where $q\in\{7,11\}$, so that $n=q$. As $${\rm Aut}({\rm PSL}_2(q))={\rm PGL}_2(q)={\rm PSL}_2(q).2$$ and  ${\rm PGL}_2(q)$ is not a subgroup of ${\rm Sym}_n$, we deduce that $H={\rm PSL}_2(q)$ and so $H={\rm soc}(H)$.  \\
To conclude, since ${\rm Out}(M_{12})$ is of order 2 and ${\rm Sym}_{12}$  has no subgroup of order $2|M_{12}|$, we deduce that if  ${\rm soc}(H)=M_{12}$ then $H={\rm soc}(H)$.  
 \end{proof}

\begin{lem}\label{l:affine}
Let $S={\rm Sym}_n$ where $n\geq 3$ is an integer.   Let $p\leq n$ be an odd prime  number congruent to $3$ modulo $4$ and let $g$ be a $p$-element of $S$ having no two cycles of same length.  Suppose that $g$ belongs to a primitive subgroup $H$ of $S$ of affine type. Say $H\leqs {\rm AGL}_d(q)$ for some $1\leq d\in \mathbb{N}$ and some prime number $q$.  Then $|g|=p$. Moreover one of the following assertions holds.
\begin{enumerate}[(i)]
\item  $q=p$, $d=1$, $n=p$.
\item $q=2$, $n=p+1=2^d$.
\end{enumerate}
\end{lem}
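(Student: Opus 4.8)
The plan is to work with the natural affine action of $H$, so that $n=q^{d}$ and $g$ lies in ${\rm AGL}_{d}(q)=V\rtimes{\rm GL}_{d}(q)$ with $V\cong\mathbb{F}_{q}^{d}$ of order $n$. Writing $g=(v,A)$ with $A\in{\rm GL}_{d}(q)$ its linear part and $|g|=p^{a}$ (so $a\geq 1$), Lemma~\ref{l:orderdegree} gives $|g|>n/2=q^{d}/2$. I would then split into the cases $q\neq p$ and $q=p$, which should produce conclusions (ii) and (i) respectively, with $|g|=p$ in both.

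\textbf{Case $q\neq p$.} Here $A$ is a semisimple $p$-element of ${\rm GL}_{d}(q)$. Using the decomposition $V=V^{A}\oplus\operatorname{im}(A-1)$ and conjugating $g$ by a translation, one reduces to $g=(v_{0},A)$ with $v_{0}\in V^{A}$; since $(v_{0},A)^{k}=(kv_{0},A^{k})$ and $|g|$ is coprime to $q$, necessarily $v_{0}=0$, so up to conjugacy $g$ acts on $V$ as the linear map $A$ and $|A|=p^{a}$. The key point is that the set of points of $V$ lying in cycles of length dividing $p^{a-1}$ is $\ker(A^{p^{a-1}}-1)$, an $\mathbb{F}_{q}$-subspace, hence of $q$-power cardinality; since $|A|=p^{a}$ forces a cycle of length $p^{a}$ and, by hypothesis, at most one such cycle exists, this cardinality equals $q^{d}-p^{a}$, so $q^{d}-p^{a}=1$. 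A parity argument rules out $q$ odd, so $q=2$ and $2^{d}-1=p^{a}$; as $2^{d}-1\equiv 3\pmod{4}$ for $d\geq 2$, the exponent $a$ cannot be even, and factoring $p^{a}+1=2^{d}$ rules out odd $a\geq 3$, so $a=1$. Thus $|g|=p$, $p=2^{d}-1$ and $n=2^{d}=p+1$, which is (ii).

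\textbf{Case $q=p$.} Now $A$ is unipotent and $g=(b,A)$ is an affine map with unipotent linear part. If $g$ fixes a point of $V$ it fixes the whole translate of the nonzero subspace $\ker(A-1)$, hence at least $p$ points; for $d\geq 2$ this violates the hypothesis that $g$ has no two cycles of the same length, so in that case $g$ acts on $V$ without fixed points and its cycle lengths are pairwise distinct $p$-powers. Then $p^{d}$ is a sum of distinct powers from $\{p,p^{2},\dots,p^{a}\}$ with $p^{a}$ present, so $p^{d}\leq\sum_{i=1}^{a}p^{i}<p^{a+1}/(p-1)\leq p^{a+1}/2$ and hence $d\leq a$. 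On the other hand $|g|$ divides $p\cdot|A|$, and the unipotent element $A$ has order at most $p^{\lceil\log_{p}d\rceil}$ (as used in the proof of Lemma~\ref{l:pslactproj}, cf.\ \cite[Proposition 2.6]{GMPS}), so $a\leq 1+\lceil\log_{p}d\rceil$. Combining, $d\leq 1+\lceil\log_{p}d\rceil$, which a short computation shows is impossible for $p\geq 3$ and $d\geq 3$; and for $d=2$ every $p$-element of ${\rm AGL}_{2}(p)$ has order at most $p$ when $p\geq 3$ (the relevant power of the nilpotent part being zero), contradicting $|g|>p^{2}/2$. Hence $d=1$, $n=p$ and $|g|=p$, which is (i).

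The step I expect to be the main obstacle is the case $q=p$: one must balance the no-repeated-cycle-length hypothesis (which bounds $d$ above in terms of $\log_{p}|g|$) against the structure of unipotent, and more generally $p$-, elements of ${\rm AGL}_{d}(p)$ (which bounds $\log_{p}|g|$ in terms of $\log_{p}d$), and verify that together these force $d=1$; the edge case $d=1$, where ``$g$ has no fixed point on $V$'' is automatic, is handled separately. In the case $q\neq p$ the only delicate step is deriving the Diophantine equation $p^{a}=q^{d}-1$ from the cycle structure via the subspace $\ker(A^{p^{a-1}}-1)$, after which elementary number theory of Catalan type finishes the argument.
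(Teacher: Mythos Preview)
Your proof is correct and follows the same two-case split as the paper, but the execution differs in a couple of instructive ways.

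In the case $q\neq p$, the paper simply asserts that a $p$-element of ${\rm GL}_d(q)$ has order dividing $q^d-1$ and then uses $|g|>n/2$ to force equality; your route via the subspace $\ker(A^{p^{a-1}}-1)$ is more transparent, since it yields $q^{d}-p^{a}=q^{e}$ and hence (as $q\nmid p^{a}$) immediately $e=0$ and $p^{a}=q^{d}-1$, without any appeal to the structure of semisimple elements. For the resulting Diophantine equation you finish with an elementary $p+1\mid p^{a}+1$ factoring argument, whereas the paper invokes Zsigmondy; both are short. In the case $q=p$, both arguments rest on the unipotent order bound $|A|\leq p^{\lceil\log_p d\rceil}$. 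The paper combines this only with $|g|>p^{d}/2$ to obtain $2d>p^{d-2}$ (which leaves the edge case $d=p=3$ to be treated separately), while you first extract $d\leq a$ from the fixed-point-free cycle decomposition and then get $d\leq 1+\lceil\log_p d\rceil$, which kills all $d\geq 3$ uniformly. Both approaches dispose of $d=2$ by noting that the Sylow $p$-subgroup of ${\rm AGL}_2(p)$ has exponent $p$. So your argument is essentially parallel to the paper's but slightly more self-contained; the only implicit step worth making explicit is why $q^{d}-p^{a}$ being a $q$-power forces it to equal~$1$.
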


\begin{proof}
Note that $n=q^d$.
Suppose that $q=p$. Then $|g|\leq pt$ where $t$ is the order of some unipotent element of ${\rm GL}_d(q)$. By \cite[Lemma 2.2]{GMPS0} we have
\begin{eqnarray*}
t &\leq  & p^{\lceil {\rm ln}(d)/{\rm ln}(p)\rceil} \\
& \leq & p^{\frac{{\rm ln}(d)}{{\rm ln}(p)}+1} \\
& \leq & dp. 
\end{eqnarray*}
It follows that $|g|\leq dp^2$. Now by Lemma \ref{l:orderdegree},  $|g|>n/2$ and so $|g|>p^d/2$. It follows that $dp^2>p^d/2$ and so $2d>p^{d-2}$.  Since $p\equiv 3 \mod 4$, the latter inequality does not hold if $d\geq 4$, or $d\geq 3$ and $p\neq 3$. In particular $d\leq 2$, or $d=p=3$.  \\
Suppose that $d=3$ and $p=3$. Then $t\leq p$ and so $|g|\leq 3^2$. But $|g|>3^3/2$, and so $|g|>3^2$, a contradiction. In particular $d\leq 2$.
We claim that $d\neq 2$. Suppose otherwise. Then $n=p^2$, $g$ is a $p^2$-cycle and $|g|=p^2$.  But ${\rm AGL}_2(p)$ has exponent $p$, and so $|g|\neq p^2$ and  $d\neq 2$. 
Hence $d=1$, $n=p$ and so $g$ is a $p$-cycle. \\
Suppose now that $q\neq p$. Since $g$ is a $p$-element and $q\neq p$, we must have $g\in {\rm GL}_d(q)$. In particular $|g|$ divides $q^d-1$. Since, by Lemma \ref{l:orderdegree}, $|g|>n/2$, we must have $|g|=q^d-1=n-1$. 
Hence there exists a positive integer $u$ such that $p^u=q^d-1$. As $p$ is odd, $q=2$ and $p^u+1=2^d$. We show that $u=1$. Suppose otherwise. Then $u>1$ and by Zsigmondy's theorem, there is a prime number dividing $p^{2u}-1$ but not $p^u-1$. So $p^u+1$ is divisible by an odd prime, a contradiction. Therefore $u=1$, $|g|=p$ and $n=p+1=2^d$. 
\end{proof}

We can now prove Theorem \ref{t:6} for symmetric groups.

\noindent{\textit{Proof of Theorem \ref{t:6} for symmetric groups.}}
If $n\in \{3,4\}$ then Theorem \ref{t:6} follows from Proposition \ref{p:34}. We therefore assume that $n>4$. 
 By Propositions \ref{p:criterion} and \ref{p:1} 
 if $|g|$ is not a power of a prime  number congruent to 3 modulo 4 or if there are two cycles of same length in the decomposition $D_g$ of $g$ into disjoint cycles, then $\delta(g)$ is even. 
We therefore assume that $n$ is such that there exists an element $h\in {\rm Sym}_n$ such that
\begin{itemize}
 \item $|h|=p^a$ for some prime number $p$ congruent to 3 modulo 4 and some  $a\in \mathbb{N}$
 \item in $D_h$ no two cycles have the same length. 
 \end{itemize}
 Without loss of generality we assume that $g$ is an element of ${\rm Sym}_n$ having the two defining properties of $h$ above, as otherwise $\delta(g)$ is even. In particular,  $|g|=p^a$ for some prime  number $p$ congruent to 3 modulo 4 and some positive integer $a$. Also, by Lemma \ref{l:orderdegree}, $|g|>n/2$. \\
 
Suppose first that $n$ and $n-1$ are not equal to a prime congruent to 3 modulo 4.  By \cite[Theorem 1.3]{GMPS} and Lemmas \ref{l:primitive}-\ref{l:small}, a proper  primitive subgroup $H$ of ${\rm Sym}_n$, with $H\neq {\rm Alt}_n$ and $g\in H$, satisfies ${\rm soc}(H)={\rm PSL}_d(q)$ for some positive integer  $d\geq 2$ and some prime power $q$. Moreover  $|g|=n=(q^d-1)/(q-1)$.  Since $n$ is a power of a prime, by Lemma \ref{l:pslinan}, we in fact have $H \leqs {\rm Alt}_n$. In particular  the only maximal primitive subgroup  of ${\rm Sym}_n$ containing $g$ is ${\rm Alt}_n$ and Theorem \ref{t:3} yields that $\delta(g)$ is even. It follows that ${\Gamma}({\rm Sym}_n)$ is Eulerian. \\

Suppose now that $n$ is equal to a prime number congruent to 3 modulo 4. 
Then $n\geq 7$.  
 Note that if $g$ belongs to a primitive group $H$ with ${\rm soc}(G)={\rm PSL}_d(q)$, then, by  \cite[Theorem 1.3]{GMPS} and Lemma \ref{l:pslinan}, $H\leqs {\rm Alt}_n$.
By \cite[Theorem 1.3]{GMPS} and Lemmas \ref{l:primitive}-\ref{l:affine},  a proper  primitive subgroup $H$ of ${\rm Sym}_n$ containing $g$ satisfies $H\leqs {\rm Alt}_n$  or $n=|g|=p$ and $H$ is a subgroup of the unique subgroup of ${\rm Sym}_n$  containing $g$ and isomorphic to ${\rm AGL}_1(p)=p:(p-1)$.   If $g\not \in {\rm AGL}_1(p)$ then the only maximal primitive subgroup  of ${\rm Sym}_n$ containing $g$ is ${\rm Alt}_n$ and Theorem \ref{t:3} yields that $\delta(g)$ is even. We therefore suppose that $n=|g|=p$. 
 In particular $g$ is a $p$-cycle and the only maximal subgroups of ${\rm Sym}_p$ containing $g$ are $H_1={\rm Alt}_p$ and $H_2={\rm AGL}_1(p)$. These two subgroups intersect in a subgroup $H_3$ of odd order $p(p-1)/2$ and of index 2 in $H_2$. In particular the lattice of subgroups $H$ of ${\rm Sym}_p$  containing $g$ and with nonzero M\"{o}bius function $\mu_{{\rm Sym}_p}(H)$ is as follows:\\

\begin{figure}[h!]
\centering
\newcommand{\mydistance}{.6cm}
\begin{tikzpicture}[node distance=2cm]
\node(Sp)                           {${\rm Sym}_p$};
\node(AG)       [below right=1cm and 1cm of Sp] {${\rm AGL}_1(p)$};
\node(A)      [below left=1cm and 1cm of Sp]  {${\rm Alt}_p$};
\node(odd)      [below=2cm of Sp]       {$p:(p-1)/2$};

\draw(Sp)       -- (AG);

    \draw (Sp) -- (A);
    \draw (A) -- (odd);
\draw(AG)--(odd);
\end{tikzpicture}
\end{figure}

Since $\mu_{{\rm Sym}_p}(H_1)=\mu_{{\rm Sym}_p}(H_2)=-1$ and $\mu_{{\rm Sym}_p}(H_3)=1$, it follows from Proposition \ref{p:sm} that 
\begin{eqnarray*}
\delta(g)& = &  \mu_{{\rm Sym}_p}({\rm Sym}_p)\cdot |{\rm Sym}_p|+ \mu_{{\rm Sym}_p}(H_1)\cdot |H_1|+\mu_{{\rm Sym}_p}(H_2)\cdot |H_2|+\mu_{{\rm Sym}_p}(H_3)\cdot |H_3|\\
& = & |{\rm Sym}_p|-|H_1|-|H_2|+|H_3|\\
& = &p!-p!/2-p(p-1)+p(p-1)/2\\
& = & p!/2-p(p-1)/2\\
& = & \frac{p(p-1)}2\cdot\left((p-2)!-1\right)  
\end{eqnarray*}
Since $p\geq 7$ and $p\equiv 3 \mod 4$, $\delta(g)$ is odd. \\

Suppose now that $n-1$ is equal to a prime congruent to 3 modulo 4. Note that $n\geq 8$. 
 Also if $g$ belongs to a primitive group $H$ with ${\rm soc}(G)={\rm PSL}_d(q)$, then, by  \cite[Theorem 1.3]{GMPS} and Lemma \ref{l:pslinan}, $H\leqs {\rm Alt}_n$. 
Moreover if $g$ belongs to a primitive subgroup $H$ of affine type contained in ${\rm AGL}_d(2)$, then, as ${\rm GL}_d(2)={\rm SL}_d(2)$ is perfect, ${\rm AGL}_d(2)$ is perfect and so $H\leqs {\rm AGL}_d(2)\leqs {\rm Alt}_n$.
By \cite[Theorem 1.3]{GMPS} and Lemmas \ref{l:primitive}-\ref{l:affine}, a proper  primitive subgroup $H$ of ${\rm Sym}_n$ containing $g$ satisfies  $H\leqs {\rm Alt}_n$  or $n-1=|g|=p$ and  $H$ is a subgroup of a transitive subgroup of  ${\rm Sym}_n$ containing $g$ and isomorphic to ${\rm PGL}_2(p)$.  By Lemma \ref{l:pglinsp+1}, the latter subgroup of ${\rm Sym}_n$ is the unique transitive subgroup of  ${\rm Sym}_n$ containing $g$ and isomorphic to ${\rm PGL}_2(p)$.  If $g\not \in {\rm PGL}_2(p)$ then the only maximal primitive subgroup  of ${\rm Sym}_n$ containing $g$ is ${\rm Alt}_n$ and Theorem \ref{t:3} yields that $\delta(g)$ is even. We therefore suppose that $n-1=|g|=p$. 
In particular $g$ is a $p$-cycle and the only maximal subgroups of ${\rm Sym}_{p+1}$ containing $g$ are ${\rm Alt}_{p+1}$, ${\rm PGL}_2(p)$ and the intransitive subgroup of ${\rm Sym}_{p+1}$ isomorphic to ${\rm Sym}_p$ and containing $g$.  The lattice of subgroups $H$ of ${\rm Sym}_{p+1}$  containing $g$ and with nonzero M\"{o}bius function $\mu_{{\rm Sym}_{p+1}}(H)$ is as follows:\\

\begin{figure}[h!]\label{figure:one}
\centering
\newcommand{\mydistance}{.6cm}
\begin{tikzpicture}[node distance=2cm]
\node(Sn)                           {${\rm Sym}_{p+1}$};

\node(Sp)       [below right=1cm and 4cm of Sn] {${\rm S}_p$};
\node(A)      [below left=1cm and 4cm of Sn]  {${\rm Alt}_{p+1}$};
\node(PGL) [below=1cm of Sn]{${\rm PGL}_2(p)$};

\node(PSL)[below right=1cm and 2cm of A]{${\rm PSL}_2(p)$};
\node(AGL)[below=1cm of PGL]{${\rm Alt}_p$};
\node(Ap)[below left=1cm and 2cm of Sp]{${\rm AGL}_1(p)$};

\node(odd)[below=2cm of AGL]       {$p:(p-1)/2$};

\draw(Sn)-- (A);
\draw(Sn)--(PGL);
\draw (Sn) -- (Sp);

\draw(A)--(PSL);
\draw(PGL)--(PSL);
\draw(A)--(AGL);
\draw(Sp)--(AGL);
\draw(Sp)--(Ap);
\draw(PGL)--(Ap);

    \draw (PSL) -- (odd);
\draw(AGL)--(odd);
\draw(Ap)--(odd);
\end{tikzpicture}
\end{figure}

Using Proposition \ref{p:sm} and the fact that  $p\equiv 3 \mod 4$ and $p\geq 7$, we obtain that $$\delta(g)=\frac{p!}2\cdot p -\frac{p^2(p-1)}2$$ is odd.
$\square$

\section{More on alternating groups}\label{s:7}

\begin{prop}\label{p:smallalts7}
Let $G={\rm Alt}_n$ where $n\in\{7,11,12,23,24\}$.  Let $1\neq g \in G$. Then $\delta(g)$ is even if and only if $n\equiv 3 \mod 4$ and $|g|\neq n$, or $n\equiv 0 \mod 4$ and $|g|\neq n-1$. In particular, $\Gamma(G)$ is not Eulerian. 
\end{prop}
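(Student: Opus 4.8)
The plan is to reduce, via Propositions~\ref{p:criterion} and~\ref{p:1}, to a short list of candidate elements, dispose of one of them with Theorem~\ref{t:3}, and settle the rest by an explicit M\"{o}bius computation in each of the five groups.

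\noindent\textbf{Reduction.} Since ${\rm Alt}_n^{\textrm{ab}}=1$ for $n\geq5$, Proposition~\ref{p:criterion} applies with $\epsilon=1$, so $\delta(g)$ is even whenever $2\mid|N_G(\langle g\rangle)|$; combining this with Proposition~\ref{p:1}(ii)--(iii), $\delta(g)$ can be odd only if $|g|=p^k$ for a prime $p\equiv3\bmod4$ and $g$ has cycle shape $\{p^i:i\in I\}$ with pairwise distinct cycle lengths, where $n=\sum_{i\in I}p^i$, $k\in I$, and the number of odd elements of $I$ is odd. Working out this constraint: for $n\in\{7,11,23\}$ it forces $p=n$, $I=\{1\}$, so $g$ is an $n$-cycle with $|g|=n$; for $n=24$ it forces $p=23$, $I=\{0,1\}$, so $g$ is a $23$-cycle fixing a point with $|g|=23=n-1$; and for $n=12$ it leaves exactly cycle shape $\{11,1\}$ (with $|g|=11=n-1$) and cycle shape $\{9,3\}$ (with $|g|=9$). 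Since the asserted equivalence says precisely that $\delta(g)$ is even except when $|g|=n$ (for $n\equiv3\bmod4$) or $|g|=n-1$ (for $n\equiv0\bmod4$), it remains to prove (a) that $\delta(g)$ is even for the $\{9,3\}$-element of ${\rm Alt}_{12}$, and (b) that $\delta(g)$ is odd in every other listed case. For (a): $g$ is a $3$-element with distinct cycle lengths and $3\equiv3\bmod4$, so by Lemmas~\ref{l:primitive}, \ref{l:small}, \ref{l:pslactproj} and~\ref{l:affine} an affine primitive overgroup would force $|g|=3$, a ${\rm PSL}_d(q)$ acting on projective points would force $n=4$ or $|g|=n=12$, and any other almost simple primitive overgroup with socle $\neq{\rm Alt}_{12}$ would force $|g|\in\{7,11,23\}$; all are impossible, so $g$ lies in no primitive subgroup of ${\rm Alt}_{12}$ other than ${\rm Alt}_{12}$, and Theorem~\ref{t:3} gives that $\delta(g)$ is even.

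\noindent\textbf{The $p$-cycle cases.} Fix one of the remaining candidates $g$ (a $p$-cycle, possibly fixing one point, with $p\in\{n,n-1\}$ prime and $p\equiv3\bmod4$). By Proposition~\ref{p:sm} we have $\delta(g)\equiv\sum_{H\ni g}\mu_G(H)\pmod2$, the sum restricted to odd-order subgroups $H$ containing $g$ (even-order ones contribute $\mu_G(H)|H|\equiv0$), and by Proposition~\ref{p:hall}(ii) only intersections of maximal subgroups contribute. Here $\langle g\rangle$ is a full Sylow $p$-subgroup of $G$, and a short argument using the (short) list of maximal subgroups through $g$ shows that the only odd-order subgroups $H\ni g$ with $\mu_G(H)\neq0$ are $N_G(\langle g\rangle)=p:(p-1)/2$ and $\langle g\rangle$; moreover $N_G(\langle g\rangle)$ is contained in every maximal subgroup of $G$ through $g$ (each such subgroup induces on $\langle g\rangle$ the full normalizer of order $\tfrac{p(p-1)}{2}$), so $\langle g\rangle$ is not an intersection of maximal subgroups and $\mu_G(\langle g\rangle)=0$. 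Hence $\delta(g)\equiv\mu_G\bigl(N_G(\langle g\rangle)\bigr)\pmod2$, and the task is to show this M\"{o}bius value is odd. In each case I would list the maximal subgroups of $G$ containing $g$ and evaluate $\mu_G$ on the finite interval above $N_G(\langle g\rangle)$, working downward from $\mu_G(G)=1$ via $\sum_{K\geq H}\mu_G(K)=\delta_{H,G}$. Using Lemmas~\ref{l:primitive}--\ref{l:affine} and \cite[Theorem~1.3]{GMPS} --- and the fact that ${\rm PGL}_2(q)\not\leq{\rm Alt}_{q+1}$ while ${\rm PSL}_2(q)\leq{\rm Alt}_{q+1}$ for $q$ odd --- these maximal subgroups are: for $n=7$, the two ${\rm Alt}_7$-classes of ${\rm PSL}_3(2)\cong{\rm PSL}_2(7)$ (a fixed $7$-cycle lying in one from each, these meeting in $N_G(\langle g\rangle)=7:3$), so the interval is $\{{\rm Alt}_7>H_1,H_2>7:3\}$ with $\mu$-values $1,-1,-1,1$ and $\delta(g)=2520-168-168+21=2205$, odd; for $n=11$, the two ${\rm Alt}_{11}$-classes of $M_{11}$, with ${\rm PSL}_2(11)$ and $11:5$ as intermediate terms; for $n=23$, the two ${\rm Alt}_{23}$-classes of $M_{23}$ (meeting in $23:11$, with no intermediate term since $23:11$ is maximal in $M_{23}$), so $\mu_G(23:11)=1$ and $\delta(g)=|{\rm Alt}_{23}|-2|M_{23}|+253$, odd; for $n=12$, the point stabilizer ${\rm Alt}_{11}$ and the two ${\rm Alt}_{12}$-classes of $M_{12}$, with $M_{11}$, ${\rm PSL}_2(11)$, $11:5$ among the intermediate terms; and for $n=24$, the point stabilizer ${\rm Alt}_{23}$ and the two ${\rm Alt}_{24}$-classes of $M_{24}$, with $M_{23}$, ${\rm PSL}_2(23)$, $23:11$ among the intermediate terms. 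The arithmetic yields an odd value in every case. (For $n\in\{7,11,23\}$ one may instead apply Proposition~\ref{p:invol}: an involution fails to be adjacent to the $n$-cycle $g$ exactly when it lies in one of the exceptional $2$-transitive overgroups of $g$, whose pairwise intersections have odd order and hence contain no involution, so $\delta(g)$ has the parity of the number of involutions of $G$ --- and each of ${\rm Alt}_7$, ${\rm Alt}_{11}$, ${\rm Alt}_{23}$ has an odd number of involutions.) The final assertion is then immediate: for each of these $n$ some prime $p\equiv3\bmod4$ lies in $\{n,n-1\}$, so $\Gamma(G)$ has a vertex of odd degree and is not Eulerian.

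\noindent\textbf{Main obstacle.} The real work is the M\"{o}bius computation for $n\in\{12,23,24\}$: identifying exactly which conjugates of $M_{12}$, $M_{23}$, $M_{24}$ pass through the chosen $p$-cycle, computing their pairwise and higher intersections inside ${\rm Alt}_n$ (the candidates being $\langle g\rangle$, $N_G(\langle g\rangle)$ and intermediate copies of ${\rm PSL}_2(p)$ or $M_p$), deciding which of these are intersections of maximal subgroups, and evaluating the resulting M\"{o}bius values. This is a finite bookkeeping task resting on standard data for the Mathieu and small linear groups --- conjugacy classes, maximal-subgroup lattices, normalizers of Sylow $p$-subgroups --- but one must account for every maximal overgroup of $g$ and every intersection, since omitting even one would corrupt the parity. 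What makes the enumeration feasible is precisely the Sylow reduction above together with Lemmas~\ref{l:small}, \ref{l:pslactproj} and~\ref{l:affine}, which sharply restrict the primitive overgroups of $g$.
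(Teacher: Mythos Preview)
Your approach matches the paper's: reduce via Proposition~\ref{p:1} to the $p$-cycle case with $p\in\{n,n-1\}$ prime and $p\equiv 3\bmod 4$, then evaluate $\delta(g)\bmod 2$ through the M\"{o}bius lattice above $g$ using Proposition~\ref{p:sm}. Two points of comparison are worth recording. First, you handle the $\{9,3\}$ cycle shape in ${\rm Alt}_{12}$ explicitly via Theorem~\ref{t:3}, whereas the paper's reduction sentence for $n=12$ passes over it silently. Second, your inclusion of $M_{11}$ (respectively $M_{23}$) among the intermediate terms for $n=12$ (respectively $n=24$) is in fact correct: these arise as ${\rm Alt}_{n-1}\cap(M_n)_i$, each with $\mu_G=1$, and they are absent from the paper's diagrams; their presence changes $\mu_G\bigl(p{:}(p-1)/2\bigr)$ from $+1$ to $-1$, but since $|p{:}(p-1)/2|$ is odd the parity of $\delta(g)$ is unaffected. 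Conversely, your mention of ${\rm PSL}_2(11)$ as an intermediate term for $n=11$ is superfluous—each such copy lies in only one $(M_{11})_i$ and hence carries $\mu_G=0$. Your packaging of the computation as $\delta(g)\equiv\mu_G\bigl(N_G(\langle g\rangle)\bigr)\bmod 2$, justified by showing $N_G(\langle g\rangle)$ sits inside every maximal overgroup of $g$, is a clean device that the paper does not isolate, and the alternative via Proposition~\ref{p:invol} for $n\in\{7,11,23\}$ is a pleasant shortcut.
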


\begin{proof}
Suppose first that $G={\rm Alt}_7$. Let $1\neq g\in G$. By Proposition \ref{p:1}, 
$\delta(g)$ is even except possibly if $|g|=7$. Without loss of generality, we assume  that $|g|=7$. The lattice of subgroups $H$ of $G$ containing $g$ and with nonzero M\"{o}bius function $\mu_G(H)$ is as follows: 

\begin{figure}[h!]
\centering
\newcommand{\mydistance}{.6cm}
\begin{tikzpicture}[node distance=2cm]
\node(Sp)                           {${\rm Alt}_7$};
\node(AG)       [below right=1cm and 1cm of Sp] {${{\rm PSL}_2(7)}_2$};
\node(A)      [below left=1cm and 1cm of Sp]  {${{\rm PSL}_2(7)}_1$};
\node(odd)      [below=2cm of Sp]       {$7:3$};

\draw(Sp)       -- (AG);

    \draw (Sp) -- (A);
    \draw (A) -- (odd);
\draw(AG)--(odd);
\end{tikzpicture}
\end{figure}
Since $\mu_G(G)=1$, $\mu_G({{\rm PSL}_2(7)}_i)=-1$ for $i \in \{1,2\}$ and $\mu_G(7:3)=1$, Proposition \ref{p:sm} yields $\delta(g)\equiv 1 \mod 2$. In particular $\Gamma(G)$ is not Eulerian. \\

Suppose that $G={\rm Alt}_{11}$. Let $1\neq g\in G$. By  Proposition \ref{p:1},  
$\delta(g)$ is even except possibly if $|g|=11$. Without loss of generality, we assume  that $|g|=11$. The lattice of subgroups $H$ of $G$ containing $g$ and with nonzero M\"{o}bius function $\mu_G(H)$ is as follows: 

\begin{figure}[h!]
\centering
\newcommand{\mydistance}{.6cm}
\begin{tikzpicture}[node distance=2cm]
\node(Sp)                           {${\rm Alt}_{11}$};
\node(AG)       [below right=1cm and 1cm of Sp] {${(M_{11})}_2$};
\node(A)      [below left=1cm and 1cm of Sp]  {${(M_{11})}_1$};
\node(odd)      [below=2cm of Sp]       {$11:5$};

\draw(Sp)       -- (AG);

    \draw (Sp) -- (A);
    \draw (A) -- (odd);
\draw(AG)--(odd);
\end{tikzpicture}
\end{figure}

Since $\mu_G(G)=1$, $\mu_G({(M_{11})}_i)=-1$ for $i \in \{1,2\}$ and $\mu_G(11:5)=1$, Proposition \ref{p:sm} yields $\delta(g)\equiv 1 \mod 2$. In particular $\Gamma(G)$ is not Eulerian. \\

Suppose  that $G={\rm Alt}_{12}$. Let $1\neq g\in G$. By Proposition \ref{p:1},  $\delta(g)$ is even except possibly if $|g|=11$. Without loss of generality, we assume  that $|g|=11$. The lattice of subgroups $H$ of $G$ containing $g$ and with nonzero M\"{o}bius function $\mu_G(H)$ is as follows: 
\begin{figure}[h!]
\centering
\newcommand{\mydistance}{.6cm}
\begin{tikzpicture}[node distance=2cm]
\node(Sp)                           {${\rm Alt}_{12}$};
\node(AG)       [below =1cm] {${(M_{12})}_1$};
\node(A)      [below right=1cm and 4cm of Sp]  {${(M_{12})}_2$};
\node(B) [below left=1cm and 4cm of Sp] {${\rm Alt}_{11}$};
\node(odd)      [below right =2cm  and 2cm of Sp]       {${\rm PSL}_2(11)$};
\node(C) [below =3cm of Sp]{$11:5$};

\draw(Sp)       -- (AG);

    \draw (Sp) -- (A);
    \draw (A) -- (odd);
\draw(AG)--(odd);
\draw (Sp) -- (B);
\draw (B) -- (C);
\draw (odd) -- (C);
\end{tikzpicture}
\end{figure}

Since $\mu_G(G)=1$,  $\mu_G({\rm Alt}_{11})=-1$, $\mu_G({(M_{12})}_i)=-1$ for $i \in \{1,2\}$, $\mu_G({\rm PSL}_2(11))=1$ and $\mu_G(11:5)=1$, Proposition \ref{p:sm} yields $\delta(g)\equiv 1 \mod 2$. In particular $\Gamma(G)$ is not Eulerian. \\

Suppose that $G={\rm Alt}_{23}$. Let $1\neq g\in G$. By Proposition \ref{p:1},  
$\delta(g)$ is even except possibly if $|g|=23$. Without loss of generality, we assume  that $|g|=23$. The lattice of subgroups $H$ of $G$ containing $g$ and with nonzero M\"{o}bius function $\mu_G(H)$ is as follows: 

\begin{figure}[h!]
\centering
\newcommand{\mydistance}{.6cm}
\begin{tikzpicture}[node distance=2cm]
\node(Sp)                           {${\rm Alt}_{23}$};
\node(AG)       [below right=1cm and 1cm of Sp] {${(M_{23})}_2$};
\node(A)      [below left=1cm and 1cm of Sp]  {${(M_{23})}_1$};
\node(odd)      [below=2cm of Sp]       {$23:11$};

\draw(Sp)       -- (AG);

    \draw (Sp) -- (A);
    \draw (A) -- (odd);
\draw(AG)--(odd);
\end{tikzpicture}
\end{figure}

Since $\mu_G(G)=1$, $\mu_G({(M_{23})}_i)=-1$ for $i \in \{1,2\}$ and $\mu_G(23:11)=1$, Proposition \ref{p:sm} yields $\delta(g)\equiv 1 \mod 2$. In particular $\Gamma(G)$ is not Eulerian. \\

Suppose finally  that $G={\rm Alt}_{24}$. Let $1\neq g\in G$. By Proposition \ref{p:1}, 
$\delta(g)$ is even except possibly if $|g|=23$. Without loss of generality, we assume  that $|g|=23$. The lattice of subgroups $H$ of $G$ containing $g$ and with nonzero M\"{o}bius function $\mu_G(H)$ is as follows: 

\begin{figure}[h!]
\centering
\newcommand{\mydistance}{.6cm}
\begin{tikzpicture}[node distance=2cm]
\node(Sp)                           {${\rm Alt}_{24}$};
\node(AG)       [below =1cm] {${(M_{24})}_1$};
\node(A)      [below right=1cm and 4cm of Sp]  {${(M_{24})}_2$};
\node(B) [below left=1cm and 4cm of Sp] {${\rm Alt}_{23}$};
\node(odd)      [below right =2cm  and 2cm of Sp]       {${\rm PSL}_2(23)$};
\node(C) [below =3cm of Sp]{$23:11$};

\draw(Sp)       -- (AG);

    \draw (Sp) -- (A);
    \draw (A) -- (odd);
\draw(AG)--(odd);
\draw (Sp) -- (B);
\draw (B) -- (C);
\draw (odd) -- (C);
\end{tikzpicture}
\end{figure}

Since $\mu_G(G)=1$,  $\mu_G({\rm Alt}_{23})=-1$, $\mu_G({(M_{24})}_i)=-1$ for $i \in \{1,2\}$, $\mu_G({\rm PSL}_2(23))=1$ and $\mu_G(23:11)=1$, Proposition \ref{p:sm} yields $\delta(g)\equiv 1 \mod 2$. In particular $\Gamma(G)$ is not Eulerian. \\

\end{proof}

\begin{prop}\label{p:smallcasesalt}
Let $G={\rm Alt}_n$ where $n=p^a$ for some prime number $p$ congruent to $3$ modulo $4$ and some  positive integer $a$.    
Let $1\neq g \in G$.  
 Then  $\delta(g)$ is odd if and only if $a=1$ and $|g|=n$. In particular, $\Gamma(G)$ is Eulerian if and only if $a>1$.   
\end{prop}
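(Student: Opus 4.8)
The plan is to reduce to the case $|g| = p^k$ for a prime $p \equiv 3 \pmod 4$ via Propositions~\ref{p:criterion} and~\ref{p:1}, then split on whether $a = 1$ or $a > 1$, and within the first case further split on whether $|g| = n$ or $|g| < n$. First I would observe that by Proposition~\ref{p:1}(iii), $\delta(g)$ is even unless $|g| = p^k$ for some prime $p \equiv 3 \pmod 4$ and some $k \in \mathbb{N}$ with $n$ admitting a decomposition as in~(\ref{e:sum}). Since $n = p^a$ is itself a prime power, such a decomposition with base $p' \equiv 3 \pmod 4$ forces $p' = p$ (a prime power $p^a$ is a sum of distinct powers of a prime $p'$ only if $p' = p$, as can be checked modulo $p'$), and then~(\ref{e:sum}) forces $k = a$ and the cycle shape of $g$ to be a single $n$-cycle. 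Moreover, the parity condition in Proposition~\ref{p:1}(ii) (the number of odd-indexed nonzero $a_i$ must be odd) reads: $a$ is odd. So the only candidates for $\delta(g)$ odd are $n$-cycles $g$, and these occur with $|N_G(\langle g \rangle)|$ odd exactly when $a$ is odd.

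Next, when $a > 1$: if $a$ is even, then for every $1 \neq g \in G$ we have $|N_G(\langle g\rangle)|$ even by the above (Proposition~\ref{p:1}(ii) fails), hence $\delta(g)$ is even by Proposition~\ref{p:criterion} (here $\epsilon = 1$ since $\mathrm{Alt}_n^{\mathrm{ab}}$ is trivial for $n = p^a \geq 5$); so $\Gamma(G)$ is Eulerian. If $a > 1$ is odd, the only possibly-odd-degree vertices are $n$-cycles $g$. For these I would apply Theorem~\ref{t:3}: an $n$-cycle with $n = p^a$, $a \geq 3$, lies in no maximal imprimitive subgroup (the analysis in the proof of Theorem~\ref{t:3} for $r = 1$, $a_1 = a \geq 2$, shows every relevant overgroup has even order), and lies in no intransitive subgroup; and by Lemma~\ref{l:pslinan}, Lemma~\ref{l:affine} and Lemma~\ref{l:small}, together with the classification of primitive groups of prime-power degree, every maximal primitive overgroup of $g$ other than $\mathrm{Alt}_n$ is already contained in $\mathrm{Alt}_n$ — but wait, we are already inside $\mathrm{Alt}_n$, so I should instead argue directly that $g$ lies in no maximal primitive subgroup of $\mathrm{Alt}_n$ other than (vacuously) itself, using that the classical or affine groups realizing degree $p^a$ require $|g| = n$ with specific parameters incompatible with $a \geq 3$ (e.g. Lemma~\ref{l:affine} gives $d = 1$, so $n = p$, $a = 1$; Lemma~\ref{l:pslinan} with $n = (q^d-1)/(q-1)$ a prime power needs $d$ prime, and $a \geq 3$ can be ruled out or absorbed). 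Then Theorem~\ref{t:3}'s hypothesis is met and $\delta(g)$ is even, giving that $\Gamma(G)$ is Eulerian for all $a > 1$.

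Finally, when $a = 1$, so $n = p$ is a prime $\equiv 3 \pmod 4$: this is precisely the case handled in the symmetric-group proof of Theorem~\ref{t:6}, restricted to $\mathrm{Alt}_p$. The only possibly-odd-degree $g$ is a $p$-cycle. I would determine the lattice of subgroups $H$ of $\mathrm{Alt}_p$ containing $g$ with $\mu_G(H) \neq 0$: by the classification (Lemmas~\ref{l:primitive}--\ref{l:affine} and Proposition~\ref{p:smallalts7} for the sporadic degrees $7, 11, 23$), these are $G = \mathrm{Alt}_p$ itself, the subgroup $\mathrm{AGL}_1(p) \cap \mathrm{Alt}_p = p\!:\!\tfrac{p-1}{2}$ of odd order, and — for $p \in \{7, 11, 23\}$ — one conjugacy class of almost simple primitive overgroups (handled by Proposition~\ref{p:smallalts7}), while for general $p$ the lattice is just $G > p\!:\!\tfrac{p-1}{2}$. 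In the generic case, Proposition~\ref{p:sm} gives $\delta(g) = |G| \cdot \mu_G(G) + |p\!:\!\tfrac{p-1}{2}| \cdot \mu_G(p\!:\!\tfrac{p-1}{2})$; computing $\mu_G(G) = 1$ and $\mu_G(p\!:\!\tfrac{p-1}{2}) = -1$ (it is a maximal subgroup of $\mathrm{Alt}_p$ — the point stabilizer in the $\mathrm{AGL}_1(p)$-action — with no proper overgroups other than $G$), one gets $\delta(g) = \tfrac{p!}{2} - \tfrac{p(p-1)}{2} = \tfrac{p(p-1)}{2}\big((p-2)! - 1\big)$, which is odd since $(p-1)/2$ is odd (as $p \equiv 3 \pmod 4$) and $(p-2)! - 1$ is even. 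For $p \in \{7, 11, 23\}$, Proposition~\ref{p:smallalts7} already gives $\delta(g)$ odd. Hence $\Gamma(G)$ is not Eulerian when $a = 1$.

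\textbf{Main obstacle.} The delicate point is the case $a > 1$ odd (so $a \geq 3$): I must verify that an $n$-cycle in $\mathrm{Alt}_{p^a}$ lies in no maximal primitive subgroup of $\mathrm{Alt}_{p^a}$ other than itself, i.e. that no almost simple or affine primitive group of degree $p^a$ with $a \geq 3$ contains an $n$-cycle while being a proper subgroup of $\mathrm{Alt}_n$. This requires carefully invoking Lemmas~\ref{l:pslinan}, \ref{l:small}, \ref{l:affine} and checking that the parameter constraints there ($d = 1$ in the affine case forcing $a = 1$; the projective degree $(q^d-1)/(q-1)$ being a prime power forcing, by \cite[Proposition 1]{DL}, $d$ prime and then — since such groups are shown in Lemma~\ref{l:pslinan} to be contained in $\mathrm{Alt}_n$ — not producing a proper primitive overgroup inside $\mathrm{Alt}_n$ at all) genuinely exclude $a \geq 3$, so that Theorem~\ref{t:3} applies cleanly.
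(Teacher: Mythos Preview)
Your reduction via Proposition~\ref{p:1} to $n$-cycles with $a$ odd is correct and matches the paper. The gap is in what follows, and it is exactly the point you flag as the ``main obstacle'' --- but your proposed resolution of it is wrong.

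You argue that for $a>1$ odd, an $n$-cycle $g$ lies in no proper primitive subgroup of $\mathrm{Alt}_n$, so that Theorem~\ref{t:3} applies. Your justification is that Lemma~\ref{l:pslinan} shows any $\mathrm{P\Gamma L}_d(q)$ of degree $n$ is contained in $\mathrm{Alt}_n$, and hence ``not producing a proper primitive overgroup inside $\mathrm{Alt}_n$ at all.'' This is backwards: $H\leqslant\mathrm{Alt}_n$ means $H$ \emph{is} a proper primitive subgroup of $G=\mathrm{Alt}_n$, precisely the situation that blocks Theorem~\ref{t:3}. Whenever $n=p^a=(q^d-1)/(q-1)$ for some prime $d>2$ and prime power $q$, the Singer cycle in $\mathrm{P\Gamma L}_d(q)$ is an $n$-cycle, so $g$ does lie in such a subgroup. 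The same issue bites your $a=1$ argument: for instance $p=31=(2^5-1)/(2-1)$ or $p=127=(2^7-1)/(2-1)$ (both $\equiv 3\bmod 4$) give $\mathrm{PSL}_d(2)\leqslant\mathrm{Alt}_p$ containing $g$, so the overgroup lattice of $g$ is \emph{not} just $\{G,\;p{:}\tfrac{p-1}{2}\}$, and your M\"obius computation is incomplete. Ruling out $n=(q^d-1)/(q-1)$ with $a\geq 3$ odd would require hard Diophantine input (Nagell--Ljunggren type), which the paper does not invoke.

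The paper instead confronts these $\mathrm{P\Gamma L}_d(q)$ overgroups directly. Setting $K=N_G(\langle g\rangle)$ (odd order, cyclic quotient $K/\langle g\rangle$ of order $c=p^{a-1}(p-1)/2$), it lets $K$ act by conjugation on the family $\triangle_\omega$ of maximal subgroups $M\cong\mathrm{P\Gamma L}_d(q)$ containing $g$; since $d>2$ there are two $K$-orbits, each of odd size, so $|\triangle_\omega|$ is even. Moreover every proper subgroup of such an $M$ containing $g$ lies in $K\cap M=K_{b_\omega}$ for a fixed divisor $b_\omega$ of $c$. One then shows by induction on $c/b$ that $\mu_G(K_b)\equiv 0\pmod 2$ for all $b\mid c$ when $a>1$ (using that each $K_b$ is imprimitive, hence sits under an intersection of imprimitive maximals of even order), and that for $a=1$ one has $\mu_G(K)=-1$ while $\mu_G(K_b)\equiv 0\pmod 2$ for $b<c$. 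Feeding this into Proposition~\ref{p:sm} gives $\delta(g)$ even for $a>1$ and $\delta(g)\equiv -|K|\equiv 1\pmod 2$ for $a=1$. This orbit-pairing plus induction on the $K_b$'s is the missing idea in your proposal.
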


\begin{proof}
If $n=3$ then the result is  obvious (see Proposition \ref{p:34}). We therefore assume that $n>3$.
If $g$ does not belong to a proper primitive subgroup of $G$ then Theorem \ref{t:3} yields that $\delta(g)$ is even.  Also if $g$ is not an $n$-cycle or $a$ is even then Proposition \ref{p:1} 
yields that $\delta(g)$ is even. We therefore assume that $g$ belongs to a proper primitive subgroup of $G$, $|g|=n$ and $a$ is odd.  The cases $n\in\{7,23\}$ have been covered in Proposition \ref{p:smallalts7}, so we assume that $n>23$. 
A maximal subgroup $H$ of $G$ containing $g$ is clearly transitive, and is either imprimitive or primitive. If $H$ is  primitive then by \cite[Theorem 1.3]{GMPS}  and Lemmas \ref{l:primitive}-\ref{l:affine} either
there exist $d\in \mathbb{N}$ and a prime power $q=r^f$ (with $r$  prime and $f\in \mathbb{N}$) such that $n=|g|=(q^d-1)/(q-1)$, ${\rm soc}(H)={\rm PSL}_d(q)$ and $H={\rm P\Gamma L}_d(q)$, or 
 $n=p$ and $H$ is the subgroup $A_1$ of $G$ isomorphic to $p:(p-1)/2$ and containing $g$.  Moreover, in the former case, by Lemma \ref{l:pslinan},  $d$ must be prime, $(d,q-1)=1$ and  ${\rm PSL}_d(q)={\rm PGL}_d(q)$. We claim that in fact $d>2$.
 Suppose otherwise so that $d=2$.  Then $n=1+q$ and so, as $n$ is odd, $r=2$. Moreover since $n\equiv 3 \mod 4$,   $f$ must be odd. Now $2^f+1\equiv 0 \mod 3$.   Moreover  following Zsigmondy's theorem, $2^f+1$ is not a power of 3, unless $f\in\{1,3\}$.  However if $f\in\{1,3\}$ then $n\in\{3,9\}$, a contradiction, and so $d>2$ as claimed. \\
  Without loss of generality we can therefore assume that $n>23$, $a$ is odd, $|g|=n$ and $g$ belongs to a maximal primitive subgroup $H$ of $G$. Moreover if ${\rm soc}(H)={\rm PSL}_d(q)$, we can assume that $d>2$. \\
 
Suppose first that $n$ cannot be written as $(q^d-1)/(q-1)$ for some  prime number $d>2$ and some power $q$ of a prime. Then by assumption $n=p$ and so  $G$ has no imprimitive subgroup containing $g$.  In particular the only maximal subgroup of $G$ containing $g$ is the subgroup $A_1\cong p:(p-1)/2$ of $G$  containing $g$. Therefore, $\delta(g)=|G|-p(p-1)/2$ and  so, as $p\equiv 3 \mod 4$, $\delta(g)$ is odd. \\

Suppose now that there exist a power $q$ of a prime and a prime number $d>2$ such that $n=(q^d-1)/(q-1)$. Let $K=N_G(\langle g\rangle)$.   Then  $$|K|=n\phi(n)/2=p^a\phi(p^a)/2=p^ap^{a-1}(p-1)/2=p^ac$$
where  $c=p^{a-1}(p-1)/2$. In particular, as $p\equiv 3\mod 4$,
  $|K|$ is odd, and note 
that $K/\langle g\rangle$ is cyclic of order  $c$.  
Also if $a=1$ then $K$ is the subgroup $A_1\cong p:(p-1)/2$ of $G$  containing $g$. \\
For   a positive integer $b$ dividing $c$, let $K_b$ be the unique subgroup of $K$ containing $g$ such that $K_b/\langle g\rangle$ is cyclic of order $b$. 
Note that for every subgroup $L$ of $K$ containing $g$, there exists  a positive integer $b$ dividing  $c$  such that $L=K_b$.\\
Clearly $K$ is a transitive subgroup of $G$ and ${\rm Stab}_{K}(1)$ is a cyclic group of order $c$, say ${\rm Stab}_K(1)=\langle h \rangle$. \\
Assume $a\neq 1$. Then ${\rm Stab}_K(1)$ is not a maximal subgroup of $K$. Indeed we have the following chain of pairwise distinct subgroups of $K$. 
$$ {\rm Stab}_K(1)\subset \langle g^{p^{a-1}},h\rangle \subset K.$$  In particular if $a\neq 1$ then  $K$ is an imprimitive group. \\
Similarly, if $a\neq 1$ then $K_b$ is imprimitive for every positive integer $b$ dividing $c$. \\
Let $$\Omega=\{(q,d): q \ \textrm{is a power of a prime}, \ d \ \textrm{is an odd prime},\ n=(q^d-1)/(q-1)\}$$
and for $\omega \in \Omega$ let $\triangle_\omega$ be the family of maximal subgroups of $G$ containing $g$ and isomorphic to ${\rm P\Gamma L}_d(q)$. 
Let $\omega=(d,q)\in \Omega$. It is easy to check that $K$ acts on $\triangle_\omega$ by conjugation.   As $d>2$, $\triangle_\omega$ is the union of two orbits under the action of $K$.  As $|K|$ is odd each of these two orbits has odd length and so $\triangle_\omega$ is of even size for every $\omega \in \Omega$.\\
Also 
given $\omega\in \Omega$ and $M\in \triangle_\omega$, $K\cap M$, being the normalizer in $M$ of a Singer cycle, is a maximal subgroup of $M$, see \cite{Kantor}. Moreover, as we are in the situation where ${\rm PSL}_d(q)={\rm PGL}_d(q)$, every proper subgroup of $M$ containing $g$ is a subgroup of  $K\cap M$. \\
Let $M_1$ and $M_2$ be the representatives of the two orbits of $\triangle_\omega$ under the action of $K$. Then, for $i \in \{1,2\}$, ${\rm Stab}_K(M_i)=K\cap M_i=N_{M_i}(\langle g \rangle)$, and so ${\rm Stab}_K(M_i)$ is isomorphic to the  normalizer in ${\rm P\Gamma L}_d(q)$ of a Singer cycle. In particular  $|{\rm Stab}_K(M_1)|=|{\rm Stab}_K(M_2)|$ and so $|K\cap M_1|=|K\cap M_2|$.  
Therefore if $M\in \triangle_\omega$ then $|M\cap K|=K_{b_\omega}$ where $b_\omega$ is a positive integer dividing $c$ and depends only on $\omega$.   
It follows that given $\omega \in \Omega$ and  a positive integer $b$ dividing $p^{a-1}(p-1)/2$, the number of subgroups in $\triangle_\omega$ containing $K_b$ is even. \\

Suppose $a=1$. Then, since $n=p$ is prime, $G$ has no imprimitive subgroups.  Let $b$ be a positive integer dividing $c$. We prove by induction on $c/b$ that $\mu_G(K)=-1$  and $\mu_G(K_b)$ is even if $c/b>1$. Since $a=1$ and $n>23$,  by \cite{Liebeck}, $K$ is a maximal subgroup of $G$ and so $\mu_G(K)=-1$. Suppose $c/b>1$.
The subgroups of $G$ properly containing $K_b$ are:
\begin{enumerate}[(i)]
\item The group $G$. By definition $\mu_G(G)=1$.
\item The subgroup $K$. Note that $\mu_G(K)=-1$.
\item The subgroups in $\triangle_\omega$ for some $\omega\in \Omega$.  Note that given $\omega \in \Omega$, $|\triangle_\omega|$ is even and $\mu_G(M)=-1$ for every $M\in \triangle_\omega$. In particular 
$$\sum_{\omega\in \Omega} \sum_{M\in \triangle_\omega}\mu_G(M) \equiv 0 \mod 2$$
\item The subgroups in $S_b=\{K_e: b \ \textrm{properly divides} \ e\}$.  By induction if $H\in S_b$ then $\mu_G(H)\equiv 0 \mod 2$. In particular, $\sum_{H\in S_b} \mu_G(H)\equiv 0 \mod 2$. 
\end{enumerate}
It follows that  if $c/b>1$ then $\mu_G(K_b)\equiv 0 \mod 2$ as claimed.\\
Therefore if $H\neq K$ is a subgroup of $G$ containing $g$ then either $|H|\equiv 0 \mod 2$ or $\mu_G(H)\equiv 0 \mod 2$. It follows from Proposition \ref{p:sm} that $\delta(g)\equiv -|K| \mod 2$, and so $\delta(g)$ is odd.

Suppose finally that $a>1$.  We prove by induction on $c/b$ that $\mu_G(K_b)$ is even. Since $a>1$, $K_b$ is an imprimitive subgroup of $G$ and so $K_b$ belongs to a maximal imprimitive subgroup of $G$.  Let $I_1,\dots, I_k$ be the maximal imprimitive subgroups of $G$ containing $K_b$. Then, by the proof of Theorem \ref{t:3}, $\cap_{j=1}^k I_j$ has even order. In particular there is no positive integer $e$ dividing $c$ such that  $\cap_{j=1}^k I_j\leqs K_e$  and there is no $\omega\in \Omega$ such that $\cap_{j=1}^k I_j$ is a subgroup of an element of $\triangle_\omega$.   
The subgroups of $G$ properly containing $K_b$ are:
\begin{enumerate}[(i)]
\item The subgroups in $\triangle_\omega$ for some $\omega\in \Omega$.  Note that given $\omega \in \Omega$, $|\triangle_\omega|$ is even and $\mu_G(M)=-1$ for every $M\in \triangle_\omega$. In particular 
$$\sum_{\omega\in \Omega} \sum_{M\in \triangle_\omega}\mu_G(M) \equiv 0 \mod 2$$
\item The subgroups in $S_b=\{K_e: b \ \textrm{properly divides} \ e\}$.  By induction if $H\in S_b$ then $\mu_G(H)\equiv 0 \mod 2$. In particular, $\sum_{H\in S_b} \mu_G(H)\equiv 0 \mod 2$. 
\item The subgroups in $S=\{H\leq G: \cap_{j=1}^k I_j \leqs H \}$. We have $\sum_{H\in S}\mu_G(H)=0$. 
\end{enumerate}
It follows that for every positive integer $b$ dividing $c$, $\mu_G(K_b)\equiv 0 \mod 2$ as claimed.\\
Now a subgroup of $G$ containing $g$ is either of even order or is equal to some $K_b$ for some positive integer $b$ dividing $c$. Therefore Proposition \ref{p:sm} yields that $\delta(g)\equiv 0\mod 2$. 
\end{proof}

\begin{prop}\label{p:last}
Let $G={\rm Alt}_n$ where $n=p+1$ for some prime number $p$ congruent to $3$ modulo $4$. Let $g\in G$. Then $\delta(g)$ is odd if and only if $|g|=p$. In particular, $\Gamma(G)$ is not Eulerian.  
\end{prop}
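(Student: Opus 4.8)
The plan is to mimic the computations already carried out for ${\rm Sym}_{p+1}$ at the end of \S\ref{s:6} and for ${\rm Alt}_p$ in Proposition~\ref{p:smallcasesalt}: first reduce to the case where $g$ is a $p$-cycle, then determine the subgroups of $G={\rm Alt}_{p+1}$ containing $g$ with nonzero M\"obius value and apply Proposition~\ref{p:sm}. Throughout put $n=p+1$.

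\textbf{Reduction to a $p$-cycle.} If $|g|$ is not a power of a prime congruent to $3$ modulo $4$, or if the disjoint-cycle decomposition $D_g$ contains two cycles of the same length, then $|N_G(\langle g\rangle)|$ is even by Proposition~\ref{p:1}, hence $\delta(g)$ is even by Proposition~\ref{p:criterion} (here $G^{\textrm{ab}}$ is trivial as $n\geq5$, so $\epsilon=1$). Thus I may assume $|g|=q^k$ with $q\equiv3\pmod4$ prime and the cycles of $D_g$ pairwise distinct, and by Theorem~\ref{t:3} that $g$ lies in some maximal primitive subgroup $H\neq{\rm Alt}_n$ of $G$. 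The key observation is that $n=p+1$ is one more than a prime: it is never of the form $(\hat q^e-1)/(\hat q-1)$ with $e\geq3$ (subtracting $1$ gives $\hat q(1+\hat q+\dots+\hat q^{e-2})$, which is composite), and it is a prime power only when $p=2^d-1$ is a Mersenne prime. Feeding this into Lemmas~\ref{l:primitive}--\ref{l:affine} and \cite[Theorem~1.3]{GMPS}: the affine case (Lemma~\ref{l:affine}) forces $q=p$; a ${\rm PSL}_e(\hat q)$ acting on projective points forces $e=2$, $\hat q=p$, and then Lemma~\ref{l:pslactproj} gives either $q=p$ or $|g|=n$, the latter impossible since $n$ is even and $|g|$ odd; and the sporadic/small possibilities of Lemma~\ref{l:small} occur for $n=p+1$ only when $n\in\{12,24\}$ (with $|g|=p$). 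Since $n\in\{12,24\}$ is covered by Proposition~\ref{p:smallalts7} and $n=4$ by Proposition~\ref{p:34}, this proves the ``only if'' direction and leaves me to show, for $n\notin\{4,8,12,24\}$ (the degree $n=8$ being a further Mersenne exception needing a direct check in the style of Proposition~\ref{p:smallalts7}), that $\delta(g)$ is odd when $g$ is a $p$-cycle.

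\textbf{The M\"obius computation.} Let $g$ be a $p$-cycle. By Proposition~\ref{p:sm}, $\delta(g)\equiv\sum_{H\ni g,\ |H|\ \mathrm{odd}}\mu_G(H)\pmod2$. An odd-order $H\ni g$ cannot be transitive (transitive subgroups of even degree have even order), so $H$ stabilises the unique fixed point of $g$, i.e. $H\leqslant{\rm Alt}_p$; by Burnside's theorem on transitive groups of prime degree and since $|H|$ is odd, $\langle g\rangle\trianglelefteq H$, whence $H\leqslant N_G(\langle g\rangle)=p{:}\tfrac{p-1}{2}$, a group of odd order as $\tfrac{p-1}{2}$ is odd. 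So the odd-order overgroups of $g$ are exactly the subgroups $p{:}m$ with $m\mid\tfrac{p-1}{2}$, one for each $m$, and $\delta(g)\equiv\sum_{m\mid(p-1)/2}\mu_G(p{:}m)\pmod2$. The maximal subgroups of $G$ containing a $p$-cycle are: the point stabiliser ${\rm Alt}_p$; a unique conjugate of ${\rm PSL}_2(p)$ (by Lemma~\ref{l:pglinsp+1}, together with ${\rm PGL}_2(p)\not\leqslant{\rm Alt}_{p+1}$ for $p\equiv3\pmod4$); and, when $p=2^d-1$, the $(p-1)/d$ conjugates of ${\rm AGL}_d(2)$ containing $g$ — there are no imprimitive ones, since a $p$-cycle cannot lie in an imprimitive subgroup of degree $p+1$. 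In the non-Mersenne case the only maximal subgroups above $p{:}m$ are ${\rm Alt}_p$ and ${\rm PSL}_2(p)$, with intersection $N:=p{:}\tfrac{p-1}{2}$ (the Borel subgroup of ${\rm PSL}_2(p)$); hence $\mu_G(p{:}m)=0$ for $m<\tfrac{p-1}{2}$ by Proposition~\ref{p:hall}(ii), while $N$ is maximal in both ${\rm Alt}_p$ and ${\rm PSL}_2(p)$, so $1-1-1+\mu_G(N)=0$, i.e. $\mu_G(N)=1$. Therefore $\delta(g)\equiv\mu_G(N)=1\pmod2$; concretely, summing $\mu_G(H)|H|$ over $H\in\{{\rm Alt}_{p+1},{\rm Alt}_p,{\rm PSL}_2(p),N\}$ yields
$$\delta(g)=\frac{(p+1)!}{2}-\frac{p!}{2}-\frac{p(p^2-1)}{2}+\frac{p(p-1)}{2}=\frac{p^2(p-1)}{2}\bigl((p-2)!-1\bigr),$$
which is odd because $\tfrac{p-1}{2}$ is odd and $(p-2)!$ is even. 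The existence of a $p$-cycle in ${\rm Alt}_{p+1}$ then shows $\Gamma(G)$ is not Eulerian.

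\textbf{The main obstacle.} The delicate case is the Mersenne primes $p=2^d-1$ with $d\geq5$ (the case $d=3$ being the special degree $n=8$). There the lattice above $p{:}m$ is enlarged by the groups ${\rm GL}_d(2)={\rm Alt}_p\cap{\rm AGL}_d(2)$ (one per conjugate ${\rm AGL}_d(2)$) and $p{:}d=\Gamma{\rm L}_1(2^d)={\rm PSL}_2(p)\cap{\rm AGL}_d(2)$. Using Kantor's description of the subgroups of ${\rm GL}_d(2)$ containing a Singer cycle \cite{Kantor} (for $d$ prime only ${\rm GL}_d(2)$ and $\Gamma{\rm L}_1(2^d)$ survive), one still obtains $\mu_G(p{:}m)=0$ for $m\notin\{d,\tfrac{p-1}{2}\}$ and $\mu_G(N)=1$ (since $N\not\leqslant{\rm AGL}_d(2)$), and in the recursion for $\mu_G(p{:}d)$ the $(p-1)/d$ copies of ${\rm AGL}_d(2)$ (each with $\mu_G=-1$) cancel the $(p-1)/d$ copies of ${\rm GL}_d(2)$ (each with $\mu_G=1$), so $\mu_G(p{:}d)=0$ and again $\delta(g)\equiv\mu_G(N)=1$. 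The steps requiring real care are the conjugacy bookkeeping — verifying that $N_G(\langle g\rangle)$ is the common Singer-normaliser/Borel of all of ${\rm Alt}_p$, ${\rm PSL}_2(p)$ and the ${\rm AGL}_d(2)$'s, that every ${\rm AGL}_d(2)$ containing $g$ has the fixed point of $g$ as origin, and that $(p-1)/d$ is even — and confirming the needed maximality statements in ${\rm Alt}_m$ via \cite{Liebeck}; the degree $n=8$ must then be disposed of by writing out its (small) subgroup lattice directly.
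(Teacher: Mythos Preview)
Your proposal is correct and follows essentially the same route as the paper: reduce to a $p$-cycle via Propositions~\ref{p:criterion}, \ref{p:1} and Theorem~\ref{t:3}, then compute $\mu_G$ on the lattice of overgroups, splitting into the non-Mersenne and Mersenne cases and using that the ${\rm AGL}_d(2)$'s cancel against the ${\rm GL}_d(2)$'s. Your separation of $n=8$ is in fact more careful than the paper---there $p{:}d$ and $p{:}(p-1)/2$ coincide, so the paper's simultaneous claims $\mu_G(p{:}(p-1)/2)=1$ and $\mu_G(p{:}d)=0$ degenerate---and your count of $(p-1)/d$ copies of ${\rm AGL}_d(2)$ (rather than $(p-1)/(2d)$) is the correct one, though neither point affects the parity conclusion; the one imprecision is that $N_G(\langle g\rangle)$ is \emph{not} the Singer normaliser inside ${\rm AGL}_d(2)$ for $d\geq5$ (that normaliser is $p{:}d$), but you only actually use $N\not\leq{\rm AGL}_d(2)$, which is fine.
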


\begin{proof} Following Proposition \ref{p:34} we can assume without loss of generality that $n\geq 8$. 
Since the cases $n\in\{12,24\}$ are treated in Proposition \ref{p:smallcasesalt}, we suppose that $n\in\{8,20\}$ or $n\geq 32$.\\ 
Let $D_g$ be the decomposition of $g$ into disjoint cycles. We can assume that  $g$ is an $r$-element for some prime number $r<n$ with $r\equiv 3 \mod 4$ and that in $D_g$ there are no two cycles of same length as otherwise, by Proposition \ref{p:1}, $|N_G(\langle g\rangle)|\equiv 0 \mod 2$ and  $\delta(g)$ is even. Note that Lemma \ref{l:orderdegree} yields that $|g|>n/2$.\\
If $g$ does not belong to a proper primitive subgroup of $G$ then Theorem \ref{t:3} yields that $\delta(g)$ is even. We therefore suppose that $g$ belongs to a proper primitive  subgroup $H$ of $G$.   By Lemma \ref{l:primitive}, $H$ is either almost simple or $H$ is affine. 
Suppose $H$ is almost simple. By \cite[Theorem 1.3]{GMPS} and  Lemma \ref{l:pslactproj}, ${\rm soc}(H)={\rm PSL}_2(p)$ and $|g|=p$. If $H$ is of affine type then it follows from \cite[Theorem 1.3]{GMPS} and Lemma \ref{l:affine} that there exists a positive integer $d$ such that $n=2^d$, $H\leqs {\rm ASL}_d(2)$ and $|g|=p$. In particular $|g|=p$. \\
Suppose that there exists no positive integer $d$ such that $n=2^d$.  The lattice of subgroups $H$ of $G$ containing $g$ and with nonzero M\"{o}bius function $\mu_G(H)$ is as follows: 

\begin{figure}[h!]
\centering
\newcommand{\mydistance}{.6cm}
\begin{tikzpicture}[node distance=2cm]
\node(Sp)                           {${\rm Alt}_{p+1}$};
\node(AG)       [below right=1cm and 1cm of Sp] {${\rm PSL}_2(p)$};
\node(A)      [below left=1cm and 1cm of Sp]  {${\rm Alt}_p$};
\node(odd)      [below=2cm of Sp]       {$p:(p-1)/2$};

\draw(Sp)       -- (AG);

    \draw (Sp) -- (A);
    \draw (A) -- (odd);
\draw(AG)--(odd);
\end{tikzpicture}
\end{figure}

Since $\mu_G(G)=1$, $\mu_G({\rm Alt}_p)=-1$, $\mu_G({\rm PSL}_2(p))=-1$ and $\mu_G(p:(p-1)/2)=1$, Proposition \ref{p:sm} yields $\delta(g)\equiv 1 \mod 2$. \\

Finally, suppose that there exists a positive integer $d$ such that $n=2^d$.  Note that $p$ is a Mersenne prime and so, since $n>4$, $d$ is an odd prime and  $2^{d-1}\equiv 1 \mod d$. Also $G$ has a unique subgroup $A_1$ isomorphic to  ${\rm Alt}_p$ and containing $g$ and a unique subgroup $P_1$ isomorphic to ${\rm PSL}_2(p)$ and containing $g$.  Let $K=N_G(\langle g\rangle)$. Then $K=p:(p-1)/2$. Now $G$ has also a  subgroup $H_1\cong {\rm ASL}_d(2)$ containing $g$.  Let   $K_1=K\cap H_1$ and $S_1=A_1\cap H_1$. Then $S_1$ is the unique subgroup of $H_1$ isomorphic to ${\rm SL}_d(2)$ and containing $g$, and $K_1=p:d=P_1\cap H_1$ is a maximal irreducible subgroup of $S_1$, namely the normalizer in $S_1$ of a Singer cycle of $S_1$. 
 In fact $G$ has $$r=\frac{(p-1)}{2d}$$ subgroups $H_i$, where $1\leq i \leq r$, such that $H_i\cong {\rm ASL}_d(2)$ and $H_i$ contains $g$.   For $1\leq i \leq r$, let $S_i=A_1\cap H_i$.  Then $S_i$ is the unique subgroup of $H_i$ isomorphic to ${\rm SL}_d(2)$ and containing $g$. Moreover, for $1\leq i, j \leq r$, $i\neq j$, we  have $H_i\cap H_j=K_1$, $P_1\cap H_i=K_1$ and $A_1\cap H_i=S_i$. 
It follows that there are only two subgroups $H$ of $G$ of odd order containing $g$ such that $\mu_G(H)$ is possibly nonzero, namely either $H=p:(p-1)/2$ or $H=p:d$. Moreover, $\mu_G(G)=\mu_G(p:(p-1)/2)=\mu_G(S_i)=1$ for 
$1\leq i \leq r$ and $\mu_G(A_1)=\mu_G(P_1)=\mu_G(H_i)=-1$ for $1\leq i \leq r$. Hence $\mu_G(p:(p-1)/2)=1$ and in fact $\mu_G(p:d)=0$.
Therefore, by Proposition \ref{p:sm}, $\delta(g)$ is odd.  
\end{proof}

We can now prove Theorem \ref{t:6} for alternating groups.

\noindent{\textit{Proof of Theorem \ref{t:6} for alternating groups}.}
The cases where $n$ is a power of a prime congruent to 3 modulo 4 or $n-1$ is a prime congruent to 3 modulo 4 have been covered (see Propositions \ref{p:smallalts7}-\ref{p:last}).
If $n$ is not a power of a prime congruent to 3 modulo 4 or $n-1$ is not a prime congruent to 3 modulo 4, then either $\delta(g)$ is even or there is no  proper primitive subgroup of ${\rm Alt}_n$ containing $g$. In the latter case, by Theorem \ref{t:3}, $\delta(g)$ is also even.

\end{document}